\newtheorem{theorem}{Theorem}
\newtheorem{proposition}[theorem]{Proposition}
\newtheorem{lemma}[theorem]{Lemma}
\newtheorem{corollary}[theorem]{Corollary}
\newtheorem*{theorem*}{Theorem}
\def\Xint#1{\mathchoice
{\XXint\displaystyle\textstyle{#1}}%
{\XXint\textstyle\scriptstyle{#1}}%
{\XXint\scriptstyle\scriptscriptstyle{#1}}%
{\XXint\scriptscriptstyle\scriptscriptstyle{#1}}%
\!\int}
\def\XXint#1#2#3{{\setbox0=\hbox{$#1{#2#3}{\int}$ }
\vcenter{\hbox{$#2#3$ }}\kern-.6\wd0}}
\def\dashint{\Xint-}
\newcommand{\Id}{\operatorname{Id}} 
\newcommand{\divv}{\operatorname{\nabla \cdot}}
\newcommand{\Exc}{\operatorname{Exc}}
\newcommand{\av}{-\hspace{-2.4ex}\int}
\renewcommand{\P}[1]{\mathcal{P}^{#1}}
\newcommand{\Pa}[1]{\mathcal{P}^{#1}_{a_{hom}}}
\definecolor{Yellow}{rgb}{0.95,0.9,0.0} 
\definecolor{Red}{rgb}{0.8,0.1,0.1}
\definecolor{Green}{rgb}{0.1,0.65,0.2}
\definecolor{Blue}{rgb}{0.1,0.1,0.8}
\definecolor{Purple}{rgb}{0.7,0.1,0.7}
\definecolor{Grey}{rgb}{0.6,0.6,0.6}
\begin{document}

\newcommand{\InternalRemark}[1]{{\color{red}#1}}


\title[Higher regularity for random elliptic operators]
{A higher-order large-scale regularity theory for random elliptic operators}

\author{Julian Fischer and Felix Otto}
\address{Max Planck Institute for Mathematics in the Sciences,
Inselstrasse 22, 04103 Leipzig, Germany, E-Mail:
julian.fischer@mis.mpg.de}
\address{Max Planck Institute for Mathematics in the Sciences,
Inselstrasse 22, 04103 Leipzig, Germany, E-Mail:
otto@mis.mpg.de}
\begin{abstract}
We develop a large-scale regularity theory of higher order for divergence-form
elliptic equations with heterogeneous coefficient fields $a$ in the context of
stochastic homogenization.
The large-scale regularity of $a$-harmonic functions is encoded by Liouville
principles:
The space of $a$-harmonic functions that grow at most like a polynomial of
degree $k$ has the same dimension as in the constant-coefficient case. This
result can be seen as the qualitative side of a large-scale
$C^{k,\alpha}$-regularity theory, which in the present work is developed in the
form of a corresponding $C^{k,\alpha}$-``excess decay'' estimate:
For a given $a$-harmonic function
$u$ on a ball $B_R$, its energy distance on some ball $B_r$
to the above space of $a$-harmonic
functions that grow at most like a polynomial of degree $k$ 
has the natural decay in the radius $r$ above some minimal radius $r_0$.

Though motivated by stochastic homogenization, the contribution of this paper is
of purely deterministic nature:
We work under the assumption that for the given realization $a$ of the
coefficient field, the couple $(\phi,\sigma)$ of scalar and vector potentials
of the harmonic coordinates, where $\phi$ is the usual corrector, grows
sublinearly in a mildly quantified way.
We then construct ``$k$th-order correctors'' and thereby the space of
$a$-harmonic functions that grow at most like a polynomial of degree $k$,
establish the above excess decay and then the corresponding Liouville
principle.
\end{abstract}

\keywords{stochastic homogenization, random elliptic operator,
regularity theory, $C^{k,\alpha}$ regularity, higher-order correctors,
Liouville principle}

\maketitle

\section{Introduction}

We are interested in the regularity of harmonic functions $u$ associated with a
uniformly elliptic coefficient field $a$ in $d$ space dimensions (by which we
understand a tensor field satisfying $\lambda |\xi|^2\leq \xi\cdot a\xi$ and
$|a\xi|\leq |\xi|$ for some $\lambda>0$ and any $\xi\in \mathbb{R}^d$) via the
divergence-form equation
\begin{equation}
\label{AHarmonic}
-\divv a \nabla u=0.
\end{equation}
Without continuity assumptions, the local regularity of (weak finite-energy)
solutions can be rather low, in particular in case of systems (cf. e.g.
\cite[Example 3]{PiccininiSpagnolo} for the scalar case and \cite[Section
9.1.1]{GiaquintaMartinazzi} for De Giorgi's celebrated counterexample in the
systems case).
Because of their homogeneity, the same examples show that
even when the coefficients are uniformly locally smooth, the {\it large-scale}
behavior of $a$-harmonic functions can be very different from the constant
coefficient, that is, Euclidean case; cf.\ e.g.\ Proposition
\ref{SmoothLiouvilleCounterexample} in the appendix below.
Large-scale regularity is most compactly encoded in a Liouville statement of the
following form:
The space of $a$-harmonic functions $u$ of growth not larger than $|x|^k$
has the same dimension as in the constant-coefficient case, where
the space is spanned by spherical harmonics up to order $k$. 
Because of the above-mentioned counterexamples, 
such Liouville statements may fail for uniformly elliptic coefficient fields:
For example, in the case of systems, there are non-constant harmonic maps
that decay to zero at infinity.

The question whether this situation generically improves for certain {\it
ensembles} of coefficient fields, namely stationary and ergodic ensembles as in
stochastic homogenization, seems to have first been 
phrased and partially answered in
\cite[Chapter 6 and Theorem 3]{BenjaminiCopinKozmaYadin}
in the context of random walks in random environments:
Under the mere assumption of ergodicity and stationarity,
sublinearly growing $a$-harmonic functions are almost surely constant.
The argument is limited to the scalar case, but can deal with non-uniformly
elliptic cases as percolation.

Motivated by error estimates in stochastic homogenization, the topic of a 
regularity theory for random elliptic operators
was independently addressed in a more quantitative way in 
\cite{MarahrensOtto}.
In Corollary 4 of that paper, for any $\alpha<1$, a large-scale
$C^{0,\alpha}$-inner regularity estimate for $a$-harmonic functions has been
established, with a random constant of finite algebraic moments --- however
under stronger assumptions on the ergodicity, namely a finite spectral gap
w.\ r.\ t.\ Glauber dynamics in the case of a discrete medium.

A major step forward constitutes \cite{ArmstrongSmart}, where the above result
was improved to a large-scale $C^{0,1}$-inner regularity estimate even in case
of (symmetric) systems, by showing that the approach of \cite{AvellanedaLin}
for obtaining (large-scale) regularity of $a$-harmonic
maps, which itself is based on a Campanato-type iteration, 
can be extended from periodic to random coefficient fields. Under a strong
assumption of ergodicity, namely that of a finite range of dependence, 
optimal exponential moments for the random constant are obtained.

This work motivated \cite{GloriaNeukammOtto}, which in turn is the basis for
the present paper.
In that work, another tool from periodic homogenization, namely the {\it
vector} potential $\sigma$ for the harmonic coordinates (next to the well-known
scalar potential $\phi$, also called the corrector), was transferred to the
random case, see (\ref{Equationq}) and (\ref{EquationSigma})  for the
characterizing properties. This allowed to establish
a $C^{1,\alpha}$-Liouville theorem, meaning that the space of sub-quadratically
growing $a$-harmonic functions is almost surely spanned by the the constants
and the $d$ $a$-harmonic coordinates $x_i+\phi_i$.
This holds even for non-symmetric systems and
was shown under the mere assumptions of stationarity and ergodicity. More
precisely, it relied on the almost-sure sublinear growth of the couple
$(\phi,\sigma)$ of correctors in the sense of
\begin{align}
\label{EpsilonSmall}
\lim_{r\rightarrow \infty} \varepsilon_{r}=0,
\end{align}
where
\begin{align}
\label{DefinitionEpsilon}
\varepsilon_{r}:=\sup_{R\geq r}
\frac{1}{R}\left(\dashint_{B_R} |\phi|^2+|\sigma|^2 ~dx\right)^{1/2}.
\end{align}
This sublinear growth \eqref{EpsilonSmall} was shown to hold under the
assumptions of stationarity and qualitative ergodicity.
In a second step, large-scale $C^{1,\alpha}$-inner regularity estimates for
$a$-harmonic functions were obtained, where the random constant satisfies a
stretched exponential bound under mild decay assumptions on the spatial
covariance of $a$. In a later version of \cite{GloriaNeukammOtto}, the optimal
stochastic moments for the random constant were obtained.

In the context of non-linear elliptic systems in divergence form,
the result of \cite{ArmstrongSmart} on the large-scale $C^{0,1}$-estimate
was generalized in \cite{ArmstrongMourrat} to non-symmetric coefficients
and well beyond finite range, further confirming that the random large-scale
regularity theory holds under just a mild quantification of ergodicity, like
expressed by standard mixing conditions.

In the present work, we go beyond $C^{1,\alpha}$ and establish
a large-scale $C^{k,\alpha}$-theory in form of a
corresponding excess decay and Liouville result, cf.\ Theorem
\ref{CkalphaExcessDecay} and Corollary \ref{LiouvillePrincipleK}.
This lifts the result of \cite{AvellanedaLin} from the periodic to the random case.
To streamline presentation, we first establish the
$C^{2,\alpha}$-versions of our theorems, cf.\ Theorem
\ref{C2alphaExcessDecay} and Corollary \ref{LiouvillePrinciple}.

Let us clearly state that the contribution of this paper is exclusively on the
deterministic side.
The large-scale regularity is obtained under the assumption that the given
realization $a$ of the coefficient field is such that the corresponding
corrector couple $(\phi,\sigma)$ satisfies the following slight quantification
of (\ref{EpsilonSmall}), namely
\begin{equation}\label{Epsilon2Small}
\lim_{r\rightarrow \infty} \varepsilon_{2,r}=0
\end{equation}
with
\begin{align}
\label{DefinitionEpsilon2}
\varepsilon_{2,r}:=\sum_{m=0}^\infty \min\{1,2^{m+1}/r\} \varepsilon_{2^m}.
\end{align}
Note that \eqref{Epsilon2Small} is equivalent to
$\sum_{m=0}^\infty \varepsilon_{2^m}<\infty$.

In the recent preprint \cite{FischerOtto2}, it is shown
that (\ref{Epsilon2Small}) holds for almost every realization $a$ in case of a
stationary ensemble of coefficient fields under mild quantification of
ergodicity in form of an assumption on a mild decay of correlations of $a$:
More precisely, given a stationary centered tensor-valued Gaussian random
field $\tilde a$ on $\mathbb{R}^d$ and a bounded Lipschitz map
$\Phi:\mathbb{R}^{d\times d}\rightarrow \mathbb{R}^{d\times d}$ taking values
in the set of $\lambda$-uniformly elliptic tensors, the coefficient field
$a:=\Phi(\tilde a)$ almost surely admits correctors with the property
\eqref{Epsilon2Small} assuming just decay of correlations in the sense $|\langle
\tilde a(x)\tilde a(y) \rangle| \leq C|x-y|^{-\beta}$ for some $C>0$ and some
$\beta\in (0,c(d,\lambda))$ (where $\langle\cdot\rangle$ denotes the expectation).
Note that under the assumption of a spectral gap for the ensemble, as far as the
corrector $\phi$ is concerned (but not the ``vector potential'' $\sigma$), an
estimate like (\ref{Epsilon2Small}) could also be deduced to hold almost surely
from \cite[Proposition 2]{GloriaOttoAP2011}, modulo the passage from a discrete to a
continuum medium.

The key building block for this large-scale $C^{k,\alpha}$-theory is the space
of $a$-harmonic functions that grow at most like a polynomial of degree $k$ at
infinity.
Proposition \ref{ExistenceCorrectorsPolynomialK} and Corollary
\ref{LiouvillePrincipleK} imply that under our assumption (\ref{Epsilon2Small})
this space has the same dimension as in the Euclidean case -- e.g.\ for $k=2$
the space of $a$-harmonic functions that grow at most quadratically is spanned by
$1+d+\frac{d(d+1)}{2}$ \mbox{maps -- ,} which partially answers the
question in \cite[Chapter 6]{BenjaminiCopinKozmaYadin}.
The $k$th-order excess (\ref{kthOrderExcessFirst}), by the decay of which
we encode the $C^{k,\alpha}$-theory, measures the distance to this space in
terms of the averaged squared gradient.
As our construction shows, there is a one-to-one correspondence between the
asymptotic behavior of functions in this space and $a_{hom}$-harmonic
polynomials of degree $k$.
However, there is no natural one-to-one correspondence between elements of this
space and $k$th-order $a_{hom}$-harmonic polynomials.

Before stating our results, let us recall the definition of the correctors
$(\phi,\sigma)$.
The corrector $\phi_i$ satisfies the equation
\begin{align}
\label{EquationCorrector}
-\divv a (e_i+\nabla \phi_i)=0.
\end{align}
The flux correction $q_{ij}$ is defined as
\begin{align}
\label{Equationq}
q_i := a(e_i+\nabla \phi_i)-a_{hom}e_i
\end{align}
where $a_{hom}$ is the homogenized tensor, that is, $a_{hom}e_i$ is the expectation
of $a(e_i+\nabla \phi_i)$. In our analysis, we will only use that $a_{hom}$ is
some constant elliptic coefficient.
We introduce the corresponding vector potential
$\sigma_{ijk}$ (antisymmetric in its last two indices) by requiring that
\begin{align}
\label{EquationSigma}
\divv \sigma_{ij} = q_{ij}.
\end{align}
For the actual construction of a $\sigma$ with stationary gradient we refer
to \cite{GloriaNeukammOtto}; in this note, we just use the property (\ref{EquationSigma}).
In the context of periodic homogenization, both the scalar and the
vector potential $\phi$ and $\sigma$ may be chosen to be periodic. In stochastic
homogenization, one cannot always expect to have a stationary $(\phi,\sigma)$
(for instance in $d\le 2$ even in case of finite range) but, as mentioned above,
we expect sublinear growth in the sense of (\ref{Epsilon2Small}) under
mild ergodicity assumptions.

{\bf Notation.} Throughout the paper, we use the Einstein summation convention,
i.e. we implicitly take the sum over an index whenever this index occurs twice.
For example, $b_i \partial_i v$ is an alternative notation for $(b \cdot
\nabla) v$ and $b_i \nabla v_i$ is an alternative notation for $\sum_{i=1}^d
b_i \nabla v_i$.

By $C$ we denote a generic constant whose value may be different in each
appearance of the expression $C$; similarly, by e.g.\ $C(d,\lambda)$ we denote
a generic constant depending only on $d$ and $\lambda$ whose value again may be
different for every use of the expression $C(d,\lambda)$.

By $\mathcal{E}:=\{E\in \mathbb{R}^{d\times d}:
(E_{ij}+E_{ji})(a_{hom})_{ij}=0\}$ we denote the space of matrices $E_{ij}$ for
which $E_{ij} x_i x_j$ is an $a_{hom}$-harmonic second-order polynomial.

The notation $P$ (or $P(x)$) generally refers to a polynomial.
By $\P{k}$, we denote the space of homogeneous polynomials of degree $k$.
By $\Pa{k}$, we denote the space of homogeneous polynomials of
degree $k$ which are $a_{hom}$-harmonic. On the space $\P{k}$, we introduce the
norm $||P||:=\sup_{x\in B_1} |P(x)|$; note that any other norm on this
finite-dimensional space would do as well, since we do not care for
$C(k)$-constants.

\section{Main Results}

The proof of our large-scale $C^{k,\alpha}$ regularity theory relies in an
essential way on the existence of $k$th-order correctors for the homogenization
problem, which enable us to correct $a_{hom}$-harmonic polynomials of degree $k$
by adding a small (in the $L^2$-sense) perturbation.

The ansatz for the deformation of an $a_{hom}$-harmonic polynomial
$P$, homogeneous of degree $k$ (i.e.\ $P\in \Pa{k}$), into an
$a$-harmonic function $u$ with the same growth behavior
is motivated by homogenization: We consider $P$ as
the ``homogenized solution of the problem solved by $u$'', so that we think in
terms of the two-scale expansion $u\approx
P+\phi_k\partial_kP$
and have that the error
$\psi_P:=u-(P+\phi_k\partial_kP)$ satisfies $-\nabla\cdot
a\nabla\psi_P=\nabla\cdot((\phi_ka-\sigma_k)\nabla\partial_kP)$.
In order to construct $u$, we reverse the logic and first construct a solution
$\psi_P$ to the above elliptic equation and then set
$u:=P+\phi_k\partial_kP+\psi_P$.

\begin{theorem}[Existence of higher-order ``correctors for polynomials'']
\label{ExistenceHigherOrderCorrector}
Let $d\geq 2$, $k\geq 2$, and suppose that the corrector $\phi$ and the
flux-correction potential $\sigma$ satisfy the growth assumption
(\ref{Epsilon2Small}).
Let $r_0$ be large enough so that $\varepsilon_{2,r_0}\leq \varepsilon_0$ holds
(the existence of such $r_0$ is ensured by (\ref{Epsilon2Small})), where
$\varepsilon_0=\varepsilon_0(d,k,\lambda)>0$ is a constant defined in the proof
below.
Given any $P\in \P{k}$,
there exists a ``corrector for polynomials'' $\psi_P$ satisfying
\begin{align}
\label{EquationPsiP}
-\divv a\nabla \psi_P =
\divv ((\phi_i a-\sigma_{i}) \nabla \partial_i P)
\end{align}
as well as
\begin{align}
\label{EstimateGrowthHigherOrderCorrector}
\sup_{R\geq r} \frac{1}{R^{k-1}} \left(\dashint_{B_R} |\nabla \psi_P|^2 ~dx
\right)^{1/2}
\leq C(d,k,\lambda) ||P||
\varepsilon_{2,r}
\end{align}
for any $r\geq r_0$.
Moreover, $\psi_P$ depends linearly on $P$.
\end{theorem}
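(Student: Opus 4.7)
My plan is to construct $\psi_P$ as the weak $L^2_{\text{loc}}$-limit of Dirichlet solutions on expanding balls and to obtain \eqref{EstimateGrowthHigherOrderCorrector} as the uniform a priori bound on those approximations. Since the source $G_P:=(\phi_i a-\sigma_i)\nabla\partial_i P$ and the finite-volume problem depend linearly on $P$, linearity in $P$ is built into the construction; it is preserved in the limit by making a canonical choice of representative (for instance by enforcing orthogonality to the space of already-constructed lower-order correctors).

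\emph{Setup and large-scale energy estimate.} For $R\geq 2r_0$ define $\psi^R\in H^1_0(B_R)$ by $-\divv a\nabla\psi^R=\divv G_P$ via Lax--Milgram; the map $P\mapsto\psi^R$ is linear. Testing against $\psi^R$, using the pointwise bound $|\nabla\partial_i P(x)|\leq C(k)\|P\|\,|x|^{k-2}$, a dyadic decomposition $B_R=\bigcup_m(B_{2^{m+1}}\setminus B_{2^m})$, and the sublinearity hypothesis in the form $\int_{B_{2^m}}(|\phi|^2+|\sigma|^2)\,dx\leq C\,2^{m(d+2)}\varepsilon_{2^m}^2$, one obtains $R^{-(k-1)}(\dashint_{B_R}|\nabla\psi^R|^2\,dx)^{1/2}\leq C\|P\|\,\varepsilon_{2,R}$, i.e.\ the desired bound at the largest scale.

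\emph{Main step: uniform bound at intermediate scales.} For $r_0\leq r\leq R$ we need the same estimate at scale $r$ with $\varepsilon_{2,r}$ on the right-hand side. This requires a Campanato-type iteration in which the source is split dyadically. Contributions from scales $2^m\geq r$ are transported down to scale $r$ using the large-scale $C^{k-1,\alpha}$ regularity of the homogeneous $a$-harmonic equation --- a result available from \cite{GloriaNeukammOtto} for $k=2$ and obtained by induction on $k$ in the general case --- and they enter with full weight $1$. Contributions from scales $2^m<r$ are controlled by a Caccioppoli-type estimate for the source and enter with damping factor $2^{m+1}/r$ inherited from the natural scaling of a divergence-form right-hand side. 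Summing the two regimes produces exactly the weight $\min(1,2^{m+1}/r)$ defining $\varepsilon_{2,r}$, and hence \eqref{EstimateGrowthHigherOrderCorrector} for $\psi^R$ uniformly in $R$. This step is the main obstacle, since matching the weight precisely requires careful bookkeeping of how the $C^{k-1,\alpha}$ excess decay interacts with the polynomial growth of $\nabla\partial_i P$.

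\emph{Passage to the limit and linearity.} The uniform bounds yield weak-$L^2$ compactness of $\nabla\psi^R$ on every ball $B_r$; a diagonal extraction produces $\psi_P\in H^1_{\text{loc}}(\mathbb{R}^d)$ which solves \eqref{EquationPsiP} distributionally and inherits \eqref{EstimateGrowthHigherOrderCorrector} by weak lower semicontinuity of the $L^2$-norm. Two such limits differ by an $a$-harmonic function with growth $o(r^{k-1}\varepsilon_{2,r})$; quotienting by the space of lower-order correctors (or equivalently imposing the orthogonality mentioned above) selects a unique representative, and this canonical selection commutes with linear combinations in $P$, restoring linearity of $P\mapsto\psi_P$.
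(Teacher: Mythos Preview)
Your overall strategy---dyadically decompose the source, control each piece with a large-scale regularity result for $a$-harmonic functions, and sum---is the right shape, and it is essentially what the paper does. But your ``Main step'' contains a genuine gap: you invoke the $C^{k-1,\alpha}$ regularity obtained by induction on $k$ (equivalently, from the existence of correctors up to order $k-1$), and this is \emph{not} strong enough to produce the weight $\min(1,2^{m+1}/r)$. Concretely, the contribution $\xi_m$ from the annulus $B_{2^{m+1}}\setminus B_{2^m}$ with $2^m\ge r$ is $a$-harmonic on $B_{2^m}$ and satisfies $(\dashint_{B_{2^m}}|\nabla\xi_m|^2)^{1/2}\le C\|P\|\,(2^m)^{k-1}\varepsilon_{2^m}$. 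After subtracting an optimal corrected polynomial of degree $\le k-1$, the $C^{k-1,\alpha}$ excess decay only gives
\[
\Big(\dashint_{B_r}|\nabla\xi_m-\nabla(\cdots)|^2\Big)^{1/2}\le C\Big(\frac{r}{2^m}\Big)^{(k-2)+\alpha}(2^m)^{k-1}\|P\|\,\varepsilon_{2^m},
\]
so that $r^{-(k-1)}(\dashint_{B_r}|\cdot|^2)^{1/2}\le C(2^m/r)^{1-\alpha}\|P\|\,\varepsilon_{2^m}$, not $\le C\|P\|\,\varepsilon_{2^m}$. The extra factor $(2^m/r)^{1-\alpha}$ is unbounded in the regime $2^m\gg r$ and the sum over $m$ is not controlled by $\varepsilon_{2,r}$. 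What you actually need is the endpoint $C^{k-1,1}$ estimate (decay exponent $k-1$), and this \emph{cannot} be obtained from the $(k-1)$th-order theory alone: in the paper it is Lemma~\ref{Ck-11Estimate}, whose proof goes through the $C^{k,\alpha}$ excess decay and therefore requires a $k$th-order corrector $\tilde\psi_P$ on the ball in question---precisely the object you are trying to build.

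The paper resolves this circularity by an induction on \emph{scale} rather than a direct estimate on a large Dirichlet problem. One first solves the truncated equation on $B_{r_0}$ to obtain $\psi_P^{r_0}$, then iteratively passes from $\psi_P^R$ (with source truncated to $B_R$) to $\psi_P^{2R}$: the increment $\xi_P^R$ solves the equation with source on the annulus $B_{2R}\setminus B_R$ and is $a$-harmonic on $B_R$; since $\psi_P^R$ is already available and satisfies the required smallness on $B_R$, it can serve as $\tilde\psi_P$ in Lemma~\ref{Ck-11Estimate}, yielding the $C^{k-1,1}$ decay needed to subtract a corrected polynomial of degree $\le k-1$ from $\xi_P^R$ and keep the growth bound. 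The bounds on successive differences $\psi_P^{2R}-\psi_P^R$ are summable (by \eqref{Epsilon2Small}), so the sequence is genuinely Cauchy in $H^1_{\mathrm{loc}}$, which also makes linearity in $P$ automatic without the quotienting step you propose. Your approach can be repaired, but only by incorporating this scale-by-scale bootstrap; as written, the appeal to $C^{k-1,\alpha}$ from the induction hypothesis does not close.
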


Our $\psi_P$ indeed enable us -- in conjunction with the first-order correctors
$\phi_i$ -- to correct $a_{hom}$-harmonic $k$th-order polynomials.
\begin{proposition}
\label{ExistenceCorrectorsPolynomialK}
Let $d\geq 2$, $k\geq 2$, and let $P\in \Pa{k}$.
Suppose that $\psi_P$ satisfies \eqref{EquationPsiP}.
We then have
\begin{align*}
-\divv a \nabla (P+\phi_i \partial_i P+\psi_P)=0.
\end{align*}
\end{proposition}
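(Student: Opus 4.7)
The plan is to apply the operator $-\divv a\nabla$ to $u := P + \phi_i \partial_i P + \psi_P$ and exhibit a term-by-term cancellation. Using the defining equation \eqref{EquationPsiP} for $\psi_P$ to rewrite $-\divv a\nabla\psi_P$, the problem reduces to the divergence-form identity
\[ \divv[a\nabla(P + \phi_i \partial_i P)] = \divv[(\phi_i a - \sigma_i)\nabla\partial_i P], \]
which I would establish by expanding both sides with the Leibniz rule and invoking \eqref{EquationCorrector}, \eqref{Equationq}, \eqref{EquationSigma} together with the $a_{hom}$-harmonicity of $P$.

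On the left I would write $\nabla(\phi_i\partial_i P) = (\partial_i P)\nabla\phi_i + \phi_i\nabla\partial_i P$ and combine with $\nabla P = (\partial_i P)e_i$, then use \eqref{Equationq} to get $a\nabla(P + \phi_i\partial_i P) = (\partial_i P)(a_{hom}e_i + q_i) + \phi_i a\nabla\partial_i P$. Taking the divergence, the $a_{hom}$-contribution is $(a_{hom})_{ji}\partial_j\partial_i P$, which vanishes because $P\in\Pa{k}$. The $q_i$-contribution splits via Leibniz into $q_{ij}\partial_j\partial_i P + (\partial_i P)\divv q_i$; the second summand is zero by \eqref{EquationCorrector}, since $q_i = a(e_i + \nabla\phi_i) - a_{hom}e_i$ is then divergence-free. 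Hence the left-hand side of the identity above equals $q_{ij}\partial_i\partial_j P + \divv[\phi_i a\nabla\partial_i P]$.

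On the right, the $\phi_i a$-piece already matches, so everything reduces to showing that $\divv[\sigma_i\nabla\partial_i P] = -q_{ij}\partial_i\partial_j P$. This is the main obstacle, and the only place where the vector potential truly enters. I would expand $\divv[\sigma_i\nabla\partial_i P] = (\partial_j\sigma_{ijk})(\partial_k\partial_i P) + \sigma_{ijk}\,\partial_j\partial_k\partial_i P$. The second summand vanishes by pairing the antisymmetry of $\sigma_{ijk}$ in $(j,k)$ with the symmetry of $\partial_j\partial_k\partial_i P$ in the same pair of indices. For the first summand, that antisymmetry together with a relabeling $j\leftrightarrow k$ turns $\partial_j\sigma_{ijk}$ into $-\partial_k\sigma_{ijk}$, which by \eqref{EquationSigma} equals $-q_{ij}$, yielding the required $-q_{ij}\partial_i\partial_j P$ and thereby the sought cancellation.
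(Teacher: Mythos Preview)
Your proposal is correct and is essentially the same computation as the paper's proof: both reduce the claim to the identity $\divv[a\nabla(P+\phi_i\partial_i P)]=\divv[(\phi_i a-\sigma_i)\nabla\partial_i P]$ and verify it using \eqref{EquationCorrector}, \eqref{Equationq}, \eqref{EquationSigma}, the antisymmetry of $\sigma_{ijk}$ in $(j,k)$, and the $a_{hom}$-harmonicity of $P$. The only cosmetic difference is the order of the argument---the paper starts from $-\divv(\sigma_i\nabla\partial_i P)$ and works toward $\divv a\nabla(P+\phi_i\partial_i P)$, whereas you start from the latter and match terms---but the ingredients and the key antisymmetry step are identical.
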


Let us now state our $C^{k,\alpha}$ large-scale regularity result.

\begin{theorem}[$C^{k,\alpha}$ large-scale excess-decay estimate]
\label{CkalphaExcessDecay}
Let $d\geq 2$, $k\geq 2$, and suppose that (\ref{Epsilon2Small}) holds. Let $u$
be an $a$-harmonic function. Let $\psi_P\equiv 0$ for linear polynomials $P$
(in order to simplify notation) and let $\psi_P$ be the functions constructed in
Theorem~\ref{ExistenceHigherOrderCorrector} for higher-order polynomials.
Consider the $k$th-order excess
\begin{align}
\label{kthOrderExcessFirst}
\Exc_k&(r):=
\inf_{P_\kappa\in \Pa{\kappa}}
&\dashint_{B_r} \left|\nabla u-\nabla
\sum_{\kappa=1}^k (P_\kappa + \phi_i \partial_i P_\kappa +
\psi_{P_\kappa})
\right|^2 ~dx.
\end{align}
Let $0<\alpha<1$ and let
$r_0$ be large enough so that $\varepsilon_{2,r_0}\leq \varepsilon_0$ holds
(the existence of such $r_0$ is ensured by (\ref{Epsilon2Small})),
where $\varepsilon_0=\varepsilon_0(d,k,\lambda,\alpha)>0$ is a constant defined
in the proof below.
Then for all $r,R\geq r_0$ with $r<R$ the $C^{k,\alpha}$ excess-decay
estimate
\begin{align}
\label{kthOrderExcessDecayFinal}
\Exc_k(r)\leq C(d,k,\lambda,\alpha)\left(\frac{r}{R}\right)^{2(k-1)+2\alpha}
\Exc_k(R)
\end{align}
is satisfied.
\end{theorem}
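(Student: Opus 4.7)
The plan is to prove the statement by a Campanato/Schauder iteration anchored on a single one-step decay estimate.

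\emph{Reduction.} Let $(P_\kappa^R)_{\kappa=1}^k$ be the minimizer in the infimum defining $\Exc_k(R)$, and set $\tilde u := u - \sum_{\kappa=1}^k (P_\kappa^R + \phi_i\partial_i P_\kappa^R + \psi_{P_\kappa^R})$. By Proposition~\ref{ExistenceCorrectorsPolynomialK} each summand is $a$-harmonic, so $\tilde u$ is $a$-harmonic on $B_R$; by the linearity of $P\mapsto\psi_P$ (Theorem~\ref{ExistenceHigherOrderCorrector}) and of $P\mapsto\phi_i\partial_iP$, replacing $u$ by $\tilde u$ leaves $\Exc_k(r)$ unchanged at every scale $r$, and one has the identification $\Exc_k(R)=\dashint_{B_R}|\nabla\tilde u|^2\,dx$.

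\emph{One-step decay.} The core analytic input I would establish is: for any $\theta\in(0,1/4)$ and any $R\geq r_0$ with $\varepsilon_{2,R}$ small,
\begin{equation}\label{PlanOneStep}
\Exc_k(\theta R)\;\leq\;C_1(d,k,\lambda)\,\theta^{-d}\,\varepsilon_{2,R}^2\,\Exc_k(R)\;+\;C_2(d,k,\lambda)\,\theta^{2k}\,\Exc_k(R).
\end{equation}
Its non-trivial ingredient is a \emph{harmonic approximation lemma}: construct an $a_{hom}$-harmonic function $v$ on $B_{R/2}$ such that, denoting by $Q_\kappa\in\Pa{\kappa}$ the homogeneous degree-$\kappa$ Taylor coefficient of $v$ at the origin (which is automatically $a_{hom}$-harmonic since $v$ is),
\begin{equation}\label{PlanHarmApp}
\dashint_{B_{R/2}}\Bigl|\nabla\tilde u-\nabla\bigl(v+\phi_i\partial_i v+\textstyle\sum_{\kappa=2}^k\psi_{Q_\kappa}\bigr)\Bigr|^2\,dx\;\leq\;C(d,k,\lambda)\,\varepsilon_{2,R}^2\,\Exc_k(R).
\end{equation}
The residual in \eqref{PlanHarmApp} satisfies an elliptic equation whose right-hand side has the schematic form $\divv\bigl((\phi_i a-\sigma_i)\nabla\partial_i(\cdot)\bigr)$ (as in (\ref{EquationPsiP})), so an energy estimate combined with the bound (\ref{EstimateGrowthHigherOrderCorrector}) for $\psi_{Q_\kappa}$ and the multiscale structure of $\varepsilon_{2,r}$ in (\ref{DefinitionEpsilon2}) delivers the required control by $\varepsilon_{2,R}^2\Exc_k(R)$. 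Given \eqref{PlanHarmApp}, I would combine it (rescaled from $B_{R/2}$ down to $B_{\theta R}$ at cost $\theta^{-d}$) with the classical interior Campanato estimate for the constant-coefficient, $a_{hom}$-harmonic $v$,
$$\dashint_{B_{\theta R}}\Bigl|\nabla v-\nabla\textstyle\sum_{\kappa\leq k}Q_\kappa\Bigr|^2\,dx\;\leq\;C\theta^{2k}\dashint_{B_{R/2}}|\nabla v|^2\,dx\;\leq\;C\theta^{2k}\Exc_k(R),$$
and use Proposition~\ref{ExistenceCorrectorsPolynomialK} to recognise $\sum_\kappa(Q_\kappa+\phi_i\partial_iQ_\kappa+\psi_{Q_\kappa})$ as an admissible test tuple in $\Exc_k(\theta R)$, thereby obtaining \eqref{PlanOneStep}. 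I expect \eqref{PlanHarmApp} to be the \emph{main obstacle}: whereas the first-order two-scale expansion underlying the $C^{1,\alpha}$ theory is controlled by $\varepsilon_R$, here the higher-order terms $\psi_{Q_\kappa}$ are only quantified in (\ref{EstimateGrowthHigherOrderCorrector}) through the dyadically summed $\varepsilon_{2,R}$, which is exactly the quantity appearing in \eqref{PlanHarmApp} and which forces the hypothesis (\ref{Epsilon2Small}).

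\emph{Iteration.} Since $\alpha<1$, I fix $\theta=\theta(d,k,\alpha)\in(0,1/4)$ so small that $C_2\theta^{2k}\leq\tfrac12\theta^{2(k-1)+2\alpha}$, and then choose $\varepsilon_0=\varepsilon_0(d,k,\lambda,\alpha)$ so small that $C_1\theta^{-d}\varepsilon_0^2\leq\tfrac12\theta^{2(k-1)+2\alpha}$. Directly from (\ref{DefinitionEpsilon2}), $r\mapsto\varepsilon_{2,r}$ is non-increasing, so the assumption $\varepsilon_{2,r_0}\leq\varepsilon_0$ propagates to $\varepsilon_{2,R'}\leq\varepsilon_0$ at every scale $R'\in[r_0,R]$ visited by the iteration; consequently \eqref{PlanOneStep} collapses to the clean one-step decay $\Exc_k(\theta R')\leq\theta^{2(k-1)+2\alpha}\Exc_k(R')$. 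Iterating this bound over dyadic scales $\theta^n R\geq r_0$ and handling intermediate radii $r\in[\theta^{n+1}R,\theta^n R]$ via the trivial estimate $\Exc_k(r)\leq\theta^{-d}\Exc_k(\theta^n R)$ (obtained by using the minimizer of $\Exc_k(\theta^n R)$ as a test tuple in $\Exc_k(r)$) produces \eqref{kthOrderExcessDecayFinal} with constant $C(d,k,\lambda,\alpha)=\theta^{-d-2(k-1)-2\alpha}$.
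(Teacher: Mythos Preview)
Your overall strategy---reduce to $\Exc_k(R)=\dashint_{B_R}|\nabla\tilde u|^2$, prove a one-step decay of the form \eqref{PlanOneStep}, then iterate dyadically with a fixed $\theta$---is exactly the route the paper takes (Lemma~\ref{ExcessDecayLemmak} for the one-step estimate, then the proof of Lemma~\ref{CkalphaExcessDecayGeneral} for the iteration, and finally Theorem~\ref{CkalphaExcessDecay} by specializing $\tilde\psi:=\psi$). The reduction, the choice of Taylor polynomials of an $a_{hom}$-harmonic approximant as test polynomials, and the iteration are all the same.

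The place where your sketch is too optimistic is the proof of \eqref{PlanHarmApp}. You say the residual $w:=\tilde u-(v+\phi_i\partial_i v+\sum_\kappa\psi_{Q_\kappa})$ satisfies a divergence-form equation with right-hand side $\divv((\phi_i a-\sigma_i)\nabla\partial_i(\cdot))$ and that ``an energy estimate'' then gives control by $\varepsilon_{2,R}^2\Exc_k(R)$. The equation is correct (one computes $-\divv a\nabla w=\divv((\phi_i a-\sigma_i)\nabla\partial_i(v-\sum_\kappa Q_\kappa))$), but $w$ has no useful boundary condition on $\partial B_{R/2}$, so you cannot test the equation with $w$ to run an energy estimate. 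The paper handles this by constructing the $a_{hom}$-harmonic function $u_{hom}$ as the solution of a Dirichlet problem with boundary data $u$ on some sphere $\partial B_{R'}$, and then inserting a boundary-layer cut-off $\eta$; optimizing over the layer thickness produces the approximation estimate of Lemma~\ref{ApproximationConstantCoefficient}, but only with the weaker power $\varepsilon_R^{2/(d+1)^2}$ rather than $\varepsilon_{2,R}^2$. This suffices for \eqref{PlanOneStep} (and hence for the theorem), but your sketch does not indicate this mechanism.

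A second, smaller point: the paper does \emph{not} bake the $\psi_{Q_\kappa}$ into the approximation lemma. It first approximates $\tilde u$ by the \emph{first-order} two-scale expansion $u_{hom}+\phi_i\partial_i u_{hom}$ (Lemma~\ref{ApproximationConstantCoefficient}), and only afterwards, on the small ball $B_r$, adds and subtracts the terms $\psi_{P_\kappa^{R,\mathrm{Taylor}}}$, bounding them separately via \eqref{EstimateGrowthHigherOrderCorrector} and the size estimate $\|P_\kappa^{R,\mathrm{Taylor}}\|\lesssim R^{1-\kappa}\sqrt{\Exc_k(R)}$. This decoupling is cleaner than your formulation \eqref{PlanHarmApp} and avoids the awkwardness that the ``approximant'' in \eqref{PlanHarmApp} already depends on the Taylor coefficients of $v$ at the origin.
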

Our large-scale $C^{k+1,\alpha}$ excess-decay estimate entails the following
$k$th-order Liouville principle.
\begin{corollary}[$k$th-order Liouville principle]
\label{LiouvillePrincipleK}
Let $d\geq 2$, $k\geq 2$, and suppose that the assumption (\ref{Epsilon2Small})
is satisfied.
Then the following property holds: Any $a$-harmonic function $u$
satisfying the growth condition
\begin{align}
\label{GrowthConditionInkLiouville}
\liminf_{r\rightarrow \infty} \frac{1}{r^{k+1}}
\left(\dashint_{B_r} \left|u\right|^2 ~dx\right)^{1/2}=0
\end{align}
is of the form
\begin{align*}
u=a+b_i (x_i+\phi_i)+\sum_{\kappa=2}^k (P_\kappa+\phi_i \partial_i
P_\kappa+\psi_{P_\kappa})
\end{align*}
with some $a\in \mathbb{R}$, $b\in \mathbb{R}^d$, and $P_\kappa\in
\Pa{\kappa}$ for $2\leq \kappa\leq k$ (i.e. $P_\kappa$ is a homogeneous
$a_{hom}$-harmonic polynomial of degree $\kappa$). Here, the $\psi_{P}$ denote
the higher-order correctors whose existence is guaranteed by Theorem
\ref{ExistenceHigherOrderCorrector}.

In particular, the space of all $a$-harmonic functions satisfying
\eqref{GrowthConditionInkLiouville} has the same dimension as if $a$ was
replaced by a constant coefficient, say $a_{hom}$.
\end{corollary}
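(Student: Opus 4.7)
The plan is to apply Theorem~\ref{CkalphaExcessDecay} at the enhanced order $k+1$ (which is legitimate since $k+1\geq 3$), rather than at order $k$. The point is that the decay exponent $2k+2\alpha$ appearing in the $(k+1)$st excess strictly beats the crude $R^{2k}$ upper bound on $\Exc_{k+1}(R)$ coming from the assumed subcritical growth of $u$ combined with Caccioppoli, whereas the exponent $2(k-1)+2\alpha$ appearing in $\Exc_k$ would fall short by a factor of $R^{2-2\alpha}$. This discrepancy will force $\Exc_{k+1}(r)=0$ at every admissible scale, producing a local representation of $u$ in the corrected polynomial space of order $\leq k+1$; the growth hypothesis will then eliminate the top-order term.

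More precisely, I would first fix any $\alpha\in(0,1)$, take $r_0$ so large that $\varepsilon_{2,r_0}\leq \varepsilon_0(d,k+1,\lambda,\alpha)$, and apply Theorem~\ref{CkalphaExcessDecay} with $k$ replaced by $k+1$. Choosing all $P_\kappa\equiv 0$ in the infimum defining $\Exc_{k+1}(R)$, together with the standard Caccioppoli estimate for the $a$-harmonic function $u-\overline{u}_{B_{2R}}$, gives $\Exc_{k+1}(R)\leq CR^{-2}\dashint_{B_{2R}}|u|^2$. The growth assumption (\ref{GrowthConditionInkLiouville}) then supplies a sequence $R_n\to\infty$ along which $\Exc_{k+1}(R_n)=o(R_n^{2k})$. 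Inserting this into $\Exc_{k+1}(r)\leq C(r/R_n)^{2k+2\alpha}\Exc_{k+1}(R_n)$ and sending $n\to\infty$ yields $\Exc_{k+1}(r)=0$ for every $r\geq r_0$. Since $\Exc_{k+1}(r)$ is a nonnegative quadratic form on the finite-dimensional space $\bigoplus_{\kappa=1}^{k+1}\Pa{\kappa}$, its infimum is attained, so for every such $r$ there exist $P_\kappa^{(r)}\in\Pa{\kappa}$ and $c^{(r)}\in\mathbb{R}$ with $u=c^{(r)}+\sum_{\kappa=1}^{k+1}(P_\kappa^{(r)}+\phi_i\partial_iP_\kappa^{(r)}+\psi_{P_\kappa^{(r)}})$ on $B_r$.

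The main obstacle will be to promote this family of local representations to a single global identity, i.e.\ to show that $(c^{(r)},(P_\kappa^{(r)})_\kappa)$ may be chosen independently of $r$. This reduces to a rigidity statement: if $\sum_{\kappa=1}^{k+1}(Q_\kappa+\phi_i\partial_iQ_\kappa+\psi_{Q_\kappa})$ equals a constant on some $B_r$ with $r\geq r_0$, then all $Q_\kappa=0$. I expect to prove this by peeling off degrees from the top, exploiting that, by Theorem~\ref{ExistenceHigherOrderCorrector} and the definition (\ref{DefinitionEpsilon}) of $\varepsilon_r$, the $L^2$-average growth of $Q_\kappa+\phi_i\partial_iQ_\kappa+\psi_{Q_\kappa}$ on $B_R$ is $\|Q_\kappa\|R^\kappa(1+o(1))$, so the leading-order rates of distinct degrees cannot cancel. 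Propagation from ``constant on $B_r$'' to ``identically constant on $\mathbb{R}^d$'' calls for a unique-continuation-type step, classical in the scalar case and obtainable in the systems case from the rigid finite-dimensional structure of $\bigoplus_\kappa\Pa{\kappa}$ via the global $a$-harmonicity ensured by Proposition~\ref{ExistenceCorrectorsPolynomialK}.

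With rigidity in place, the global identity $u=c+\sum_{\kappa=1}^{k+1}(P_\kappa+\phi_i\partial_iP_\kappa+\psi_{P_\kappa})$ holds on all of $\mathbb{R}^d$; reinserting into (\ref{GrowthConditionInkLiouville}) and using that the $L^2$-average of the right-hand side on $B_R$ has leading behavior $\|P_{k+1}\|R^{k+1}(1+o(1))$ forces $P_{k+1}=0$, yielding exactly the claimed form of $u$. The dimension statement then follows from injectivity of the representation, another consequence of the rigidity lemma.
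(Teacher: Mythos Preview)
Your overall strategy coincides with the paper's: apply the excess decay at order $k+1$, combine the trivial bound $\Exc_{k+1}(R)\leq\dashint_{B_R}|\nabla u|^2$ with Caccioppoli and the growth hypothesis to force $\Exc_{k+1}(r)=0$ for all $r\geq r_0$, then argue that the top-order term $P_{k+1}$ must vanish by the sublinear growth of $\phi$ and $\psi_{P_{k+1}}$.

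The only substantive divergence is in the ``rigidity'' step (independence of the $P_\kappa^{(r)}$ from $r$), and here your proposed route is unnecessarily fragile. You reduce to showing that a globally $a$-harmonic function of the form $\sum_\kappa(Q_\kappa+\phi_i\partial_iQ_\kappa+\psi_{Q_\kappa})$ which is constant on some $B_r$ must be constant everywhere, and you invoke a ``unique-continuation-type step'' for this. But unique continuation is not available in the generality of this paper: the coefficients $a$ are merely bounded, measurable, and possibly system-valued, and in that setting weak unique continuation can genuinely fail. Your fallback remark about ``the rigid finite-dimensional structure'' is too vague to close the gap.

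Both the paper's argument and a simple fix to yours avoid unique continuation altogether. The paper uses estimate \eqref{EstimatePbDiff} from Lemma~\ref{CkalphaExcessDecayGeneral}: the optimal $P_\kappa^{r,\min}$ for two radii $r_1<r_2$ differ by at most $C\,\widetilde\Exc_{k+1}(r_2)^{1/2}=0$, so they coincide. Equivalently, and closer to your framing, Lemma~\ref{QuantifiedLinearIndependenceLemmaPk} gives directly that if $\nabla\sum_\kappa(Q_\kappa+\phi_i\partial_iQ_\kappa+\psi_{Q_\kappa})=0$ on $B_r$ with $r\geq r_0$, then $\sum_\kappa r^{2(\kappa-1)}\|Q_\kappa\|^2=0$, i.e.\ all $Q_\kappa=0$; no propagation to $\mathbb{R}^d$ is needed. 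Either substitution makes your argument complete and essentially identical to the paper's.
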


The structure of our proofs is as follows:
\begin{itemize}
  \item First, under the assumption that we already have constructed an
  appropriate $k$th-order corrector on a ball $B_R$, we show a $C^{k,\alpha}$
  excess-decay estimate on large scales within this ball for $a$-harmonic
  functions (Lemma \ref{CkalphaExcessDecayGeneral}). This result directly
  implies Theorem \ref{CkalphaExcessDecay} as soon as we have proven the
  existence of a corrector on $\mathbb{R}^d$ (i.e. Theorem
  \ref{ExistenceHigherOrderCorrector}).
  \item Our $C^{k,\alpha}$ estimate implies a $C^{k-1,1}$ theory for
  $a$-harmonic functions on balls $B_R$, provided that we have already
  constructed an appropriate $k$th-order corrector on $B_R$. This is done in
  Lemma \ref{Ck-11Estimate}.
  \item At last, we are able to build our corrector, starting from small balls
  and iteratively doubling the size of our balls. At this point, we require the
  $C^{k-1,1}$ theory to show appropriate ($k$th-order) decay in the interior of
  the new contribution to the $k$th-order corrector entering at every scale.
  This iterative enlargement is carried out in Lemma
  \ref{IterativeConstructionkthCorrector}
  and finally enables us to prove Theorem \ref{ExistenceHigherOrderCorrector}.
  \item The $k$th-order Liouville principle stated in Corollary
  \ref{LiouvillePrincipleK} is an easy consequence of our $C^{k+1,\alpha}$
  large-scale excess-decay estimate.
\end{itemize}

\section{A $C^{2,\alpha}$ Large-Scale Regularity Theory for Homogeneous
Elliptic Equations with Random Coefficients}

For the reader's convenience, we shall first provide a proof for the
$C^{2,\alpha}$ case of our theorems, as in this case the proofs are less
technical while already containing the key ideas. In particular, the overall
structure of our proofs is the same as in the $C^{k,\alpha}$ case.
Since we shall use a somewhat simplified notation in the $C^{2,\alpha}$ case,
let us reformulate the $C^{2,\alpha}$ case of our theorems using this notation.

\begin{theorem}[Existence of second-order correctors]
\label{ExistenceSecondOrderCorrector}
Let $d\geq 2$ and suppose that the corrector $\phi$ and the flux-correction
potential $\sigma$ satisfy the growth assumption (\ref{Epsilon2Small}). Let
$r_0$ be large enough so that $\varepsilon_{2,r_0}\leq \varepsilon_0$ holds (the
existence of such $r_0$ is ensured by (\ref{Epsilon2Small})),
where $\varepsilon_0=\varepsilon_0(d,\lambda)>0$ is a constant defined in the
proof below.
Given any $E\in \mathbb{R}^{d\times d}$,
there exists a second-order corrector $\psi_E$ satisfying
\begin{align}
\label{EquationPsiE}
-\divv a\nabla \psi_E = E_{ij} \divv [\sigma_{ij}+\sigma_{ji}
+a(\phi_i e_j+\phi_j e_i)]
\end{align}
as well as
\begin{align*}
\sup_{R\geq r} \frac{1}{R} \left(\dashint_{B_R} |\nabla \psi_E|^2
~dx\right)^{1/2}
\leq C(d,\lambda) |E|
\varepsilon_{2,r}
\end{align*}
for any $r\geq r_0$.
Moreover, the corrector $\nabla \psi_E$ depends linearly on $E$.
\end{theorem}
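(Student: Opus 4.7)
The plan is to construct $\psi_E$ iteratively along a sequence of dyadically growing balls $B_{R_m}$ with $R_m=2^m r_0$, in line with the roadmap stated after Corollary~\ref{LiouvillePrincipleK}. Two deterministic ingredients enter: (i)~standard finite-volume solvability of the Dirichlet problem for~\eqref{EquationPsiE} on each $B_{R_m}$, and (ii)~the $C^{1,1}$-type large-scale interior regularity for $a$-harmonic functions (the special case $k-1=1$ of Lemma~\ref{Ck-11Estimate}), which asserts that an $a$-harmonic function on a large ball $B_R$ is close, on any concentric smaller ball $B_\rho$, to its best approximation of the form $c+b_i(x_i+\phi_i)$ at an interior rate that improves with $\rho/R$.

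First I would solve, for each $m$, the Dirichlet problem
\begin{equation*}
-\nabla\cdot a\nabla\psi_E^{(m)} = E_{ij}\nabla\cdot\bigl[\sigma_{ij}+\sigma_{ji}+a(\phi_ie_j+\phi_je_i)\bigr] \quad\text{on } B_{R_m},\qquad \psi_E^{(m)}\big|_{\partial B_{R_m}}=0.
\end{equation*}
The standard energy identity together with the bound $\bigl(\dashint_{B_{R_m}}|\phi|^2+|\sigma|^2\,dx\bigr)^{1/2}\le\varepsilon_{R_m}R_m$ built into~\eqref{DefinitionEpsilon} yields $\bigl(\dashint_{B_{R_m}}|\nabla\psi_E^{(m)}|^2\,dx\bigr)^{1/2}\lesssim|E|\,\varepsilon_{R_m}R_m$. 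This controls $\psi_E^{(m)}$ at its own scale but not at smaller scales. To patch the approximations, I would note that $\psi_E^{(m+1)}-\psi_E^{(m)}$ is $a$-harmonic on $B_{R_m}$, apply ingredient~(ii) to produce $c_m\in\mathbb{R}$ and $b_m\in\mathbb{R}^d$ such that $c_m+b_{m,i}(x_i+\phi_i)$ (itself $a$-harmonic, hence preserving~\eqref{EquationPsiE}) approximates $\psi_E^{(m+1)}-\psi_E^{(m)}$ with interior $L^2$-gradient decay, and then replace $\psi_E^{(m+1)}$ by $\psi_E^{(m+1)}-c_m-b_{m,i}(x_i+\phi_i)$. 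The resulting corrected sequence is Cauchy in the $L^2$-gradient norm on every fixed ball, and $\psi_E$ is defined as its limit.

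The growth estimate then arises from summing the $m$-th increment across dyadic scales: increments from scales $R_m\ge r$ contribute with their full weight $\varepsilon_{R_m}$ (directly from the energy estimate of Step~1), while those from scales $R_m<r$ enter with an additional interior-decay factor of order $R_m/r$ coming from ingredient~(ii). The two regimes together reproduce the weighted tail $\sum_{m\ge 0}\min\{1,2^{m+1}/r\}\varepsilon_{2^m}=\varepsilon_{2,r}$ of~\eqref{DefinitionEpsilon2}. Linearity of $\psi_E$ in $E$ is preserved throughout, since every step---Dirichlet solution, best-affine selection, and telescoping correction---is linear in $E$.

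The main obstacle lies in the delicate application of Lemma~\ref{Ck-11Estimate}: that result requires the smallness condition $\varepsilon_{2,r_0}\le\varepsilon_0(d,\lambda)$ to get started, and its constants must be tracked carefully so that the combination of the large-scale energy estimate with the small-scale interior decay yields precisely the weighted quantity $\varepsilon_{2,r}$ of~\eqref{DefinitionEpsilon2} rather than a larger majorant such as $\sum_m\varepsilon_{2^m}$. This matching of the two regimes is exactly the reason the weighting factor $\min\{1,2^{m+1}/r\}$ has been built into the definition~\eqref{DefinitionEpsilon2} of $\varepsilon_{2,r}$ in the first place.
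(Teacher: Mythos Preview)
Your overall strategy—iterative dyadic construction, correcting each increment by an $a$-harmonic affine piece via the $C^{1,1}$ estimate, and summing weighted contributions to recover $\varepsilon_{2,r}$—is the paper's own. Two points deserve comment.

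A minor variant: the paper does not solve Dirichlet problems on $B_{R_m}$. Instead it solves on all of $\mathbb{R}^d$ via Lax-Milgram with the right-hand side truncated by $\chi_{B_{r_0}}$ (for the initial $\psi_E^{r_0}$) and by $\chi_{B_{2R}\setminus B_R}$ (for the increment $\xi_E^R$ in Lemma~\ref{IterativeConstructionSecondCorrector}). This makes every approximate corrector $\psi_E^R$ globally defined with globally square-integrable increment, so the growth estimate can be stated and proved for \emph{all} $r\ge r_0$, including $r>R$; it is precisely this range that yields the reduced weight $\min\{1,2R/r\}$ in~\eqref{LastContributionC11}. Your Dirichlet solutions live only on $B_{R_m}$, so the small-scale contributions (which in your setup come from the accumulated affine subtractions evaluated on the larger ball) require extra bookkeeping, though the scheme can be made to work.

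The more substantive gap is the circularity in invoking Lemma~\ref{Ck-11Estimate} (equivalently Lemma~\ref{C11Estimate}). You identify its smallness hypothesis as $\varepsilon_{2,r_0}\le\varepsilon_0$, but the lemma also demands a second-order corrector $\tilde\psi_E$ on $B_R$ satisfying
\[
\sup_{r_0\le\rho\le R}\rho^{-1}\Big(\max_{E\in\mathcal{E},|E|=1}\dashint_{B_\rho}|\nabla\tilde\psi_E|^2\,dx\Big)^{1/2}\le\varepsilon_{min},
\]
which is precisely the object under construction. The paper closes this bootstrap by induction on $M$ (Lemma~\ref{IterativeConstructionSecondCorrector}): at step $M$ one feeds the already-built $\psi_E^R$ with $R=2^M r_0$ back into Lemma~\ref{C11Estimate} as $\tilde\psi_E$, and then chooses $\varepsilon_0$ small enough relative to the iteration constant $C_1$ so that the induction hypothesis forces $\varepsilon_{\tilde\psi,r_0,R}\le C_1\varepsilon_{2,r_0}\le C_1\varepsilon_0\le\varepsilon_{min}$. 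Without making this self-referential step explicit, your application of ingredient~(ii) is not yet justified.
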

Due to the linear dependence of $\psi_E$ on $E$, below we shall also write
$E_{ij}\psi_{ij}$ in place of $\psi_E$.

Note that our second-order correctors indeed enable us -- in conjunction with
the first-order correctors $\phi_i$ -- to correct $a_{hom}$-harmonic second-order
polynomials.
\begin{proposition}
\label{ExistenceCorrectorsPolynomial}
Let $d\geq 2$ and let $E\in \mathcal{E}$ (i.e. assume that the polynomial
$E_{ij} x_i x_j$ is $a_{hom}$-harmonic).
Suppose that $\psi_E$ satisfies \eqref{EquationPsiE}. We then have
\begin{align*}
-\divv a \nabla E_{ij} (x_i x_j+x_i \phi_j+\phi_i x_j+\psi_{ij})=0.
\end{align*}
\end{proposition}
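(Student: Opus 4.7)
The plan is a direct algebraic verification: identify the function in the proposition with the two-scale ansatz $u_0 := P + \phi_k\partial_k P$ for $P(x) := E_{ij} x_i x_j$, plus the corrector $\psi_E$, and then check that $-\divv(a\nabla u_0)$ matches precisely the source of \eqref{EquationPsiE}, so that the two contributions cancel.

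First, since $\partial_k P = (E_{kj}+E_{jk}) x_j$, a relabeling of dummy indices gives $\phi_k\partial_k P = E_{ij}(\phi_i x_j + x_i \phi_j)$, so the function in the proposition is exactly $u_0 + \psi_E$ (with $\psi_E = E_{ij}\psi_{ij}$). The entire problem therefore reduces to establishing the identity
\begin{equation*}
-\divv(a\nabla u_0) = -E_{ij}\,\divv\bigl[\sigma_{ij} + \sigma_{ji} + a(\phi_i e_j + \phi_j e_i)\bigr],
\end{equation*}
because combining it with \eqref{EquationPsiE} then yields $-\divv a\nabla(u_0+\psi_E) = 0$.

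For the identity itself, I would expand $\nabla u_0 = \nabla P + (\partial_k P)\nabla\phi_k + \phi_k\nabla\partial_k P$ and use the defining relation \eqref{Equationq} in the form $a(e_k+\nabla\phi_k) = a_{hom}e_k + q_k$ to rewrite
\begin{equation*}
a\nabla u_0 = (\partial_k P)(a_{hom}e_k + q_k) + \phi_k\, a\nabla\partial_k P.
\end{equation*}
Three structural facts then do the work when the divergence is taken: (i) the corrector equation $-\divv a(e_k+\nabla\phi_k)=0$ gives $\partial_j q_{kj} = 0$, so only $q_{kj}\partial_j\partial_k P$ survives from the $q$-term; (ii) $\partial_j\partial_k P = E_{jk}+E_{kj}$ is a constant symmetric tensor, collapsing second derivatives of $P$; (iii) the hypothesis $E\in\mathcal{E}$, i.e.\ $(E_{jk}+E_{kj})(a_{hom})_{jk}=0$, kills the contribution $(\partial_j\partial_k P)(a_{hom})_{jk}$ coming from the homogenized polynomial. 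After substituting $q_{ij} = \divv\sigma_{ij}$ from \eqref{EquationSigma} and symmetrizing the surviving terms in $(i,j)$ via the swap $i\leftrightarrow j$, one recovers exactly the right-hand side above.

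The main obstacle is purely bookkeeping: aligning the symmetrization $E_{ij}\mapsto E_{ij}+E_{ji}$ generated by differentiating $x_ix_j$ twice with the explicit symmetrization of $\sigma$ and $\phi$ in the source of \eqref{EquationPsiE}. No new analytical input is required; in particular the antisymmetry of $\sigma_{ijk}$ in its last two indices plays no role here and enters only later, in the construction and quantitative estimates for $\psi_E$ in Theorem~\ref{ExistenceSecondOrderCorrector}.
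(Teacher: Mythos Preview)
Your proposal is correct and matches the paper's proof in content: both are direct algebraic verifications that use exactly the three ingredients \eqref{EquationCorrector}, \eqref{Equationq}--\eqref{EquationSigma}, and the hypothesis $E\in\mathcal{E}$. The only cosmetic difference is direction: the paper starts from the source term $E_{ij}\divv[\sigma_{ij}+\sigma_{ji}+a(\phi_i e_j+\phi_j e_i)]$ and rewrites it as $E_{ij}\divv a\nabla(x_ix_j+x_i\phi_j+\phi_ix_j)$, whereas you start from $-\divv a\nabla u_0$ and land on the source term; your remark that the antisymmetry of $\sigma$ is not used here is also accurate.
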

Our $C^{2,\alpha}$ large-scale regularity theorem reads as follows.
\begin{theorem}[$C^{2,\alpha}$ large-scale excess-decay estimate]
\label{C2alphaExcessDecay}
Let $d\geq 2$ and suppose that (\ref{Epsilon2Small}) holds. Let $u$
be an $a$-harmonic function. Let $\psi_E$ be the second-order corrector
constructed in Theorem~\ref{ExistenceSecondOrderCorrector}.
Consider the second-order excess
\begin{align}
\label{SecondOrderExcessFirst}
\Exc&_2(r):=
\\ \nonumber
&\inf_{b\in \mathbb{R}^d, E\in \mathcal{E}}
\dashint_{B_r} \left|\nabla u-\nabla \big(b_i(x_i+\phi_i)
+E_{ij}(x_i x_j + x_i \phi_j + \phi_i x_j + \psi_{ij})\big)\right|^2 ~dx.
\end{align}
Let $0<\alpha<1$ and let
$r_0$ be large enough so that $\varepsilon_{2,r_0}\leq \varepsilon_0$ holds
(the existence of such $r_0$ is ensured by (\ref{Epsilon2Small})),
where $\varepsilon_0=\varepsilon_0(d,\lambda,\alpha)>0$ is a constant defined in
the proof below.
Then for all $r,R\geq r_0$ with $r<R$ the $C^{2,\alpha}$ excess-decay
estimate
\begin{align}
\label{SecondOrderExcessDecayFinal}
\Exc_2(r)\leq C(d,\lambda,\alpha)\left(\frac{r}{R}\right)^{2+2\alpha}
\Exc_2(R)
\end{align}
is satisfied.
\end{theorem}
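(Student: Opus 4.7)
The plan is to derive Theorem~\ref{C2alphaExcessDecay} from a single \emph{one-step excess improvement} iterated on a geometric sequence of scales, in the classical Campanato / Avellaneda--Lin style---now feasible in the random setting thanks to the globally defined second-order correctors $\psi_E$ from Theorem~\ref{ExistenceSecondOrderCorrector}. Concretely, I aim to establish that there exist $\theta=\theta(d,\lambda,\alpha)\in(0,\tfrac12)$ and $\varepsilon_0=\varepsilon_0(d,\lambda,\alpha)>0$ such that, whenever $u$ is $a$-harmonic on $B_R$ with $R\geq r_0$,
\begin{equation*}
\Exc_2(\theta R)\;\leq\;\theta^{2+2\alpha}\,\Exc_2(R).
\end{equation*}
Iterating this one-step estimate down from $R$ through the scales $\theta R,\theta^2 R,\dots$ immediately yields \eqref{SecondOrderExcessDecayFinal}; for an intermediate radius $r$ with $\theta^{n+1}R\leq r<\theta^n R$ a standard absorbing step exchanges a volume factor $\theta^{-d}$ against the cumulated geometric gain, affecting only the overall constant $C(d,\lambda,\alpha)$.

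To prove the one-step estimate, the first observation is that by Proposition~\ref{ExistenceCorrectorsPolynomial} any corrected polynomial $b_i(x_i+\phi_i)+E_{ij}(x_ix_j+x_i\phi_j+\phi_ix_j+\psi_{ij})$ with $E\in\mathcal{E}$ is itself $a$-harmonic; hence replacing $u$ by the difference with the minimizer of the infimum in $\Exc_2(R)$ leaves both sides of the target inequality unchanged, and I may henceforth assume $\Exc_2(R)=\dashint_{B_R}|\nabla u|^2\,dx$. Next I would perform a \emph{harmonic approximation}: let $\bar u$ be the $a_{hom}$-harmonic function on $B_{R/2}$ matching $u$ on $\partial B_{R/2}$ in the trace sense, and set $w:=u-\bar u-\phi_i\partial_i\bar u$; by the computation behind \eqref{EquationPsiE}, $w$ solves an equation of the form $-\divv a\nabla w=\divv F$ whose right-hand side $F$ is linear in $(\phi,\sigma)$ and in $\nabla\bar u$, so that after a dyadic cutoff of $(\phi,\sigma)$ and an energy estimate on $B_{R/2}$ one obtains
\begin{equation*}
\dashint_{B_{R/2}}|\nabla w|^2\,dx\;\leq\;C\,\varepsilon_{2,R}^2\,\dashint_{B_R}|\nabla u|^2\,dx.
\end{equation*}
Since $\bar u$ is $a_{hom}$-harmonic hence analytic, constant-coefficient interior $C^{2,\alpha_0}$-regularity (valid for any $\alpha_0<1$) furnishes an $a_{hom}$-harmonic degree-two Taylor polynomial $T(x)=\bar a+\bar b_ix_i+\bar E_{ij}x_ix_j$ with $\bar E\in\mathcal E$ and
\begin{equation*}
\dashint_{B_{\theta R}}|\nabla\bar u-\nabla T|^2\,dx\;\leq\;C\,\theta^{2+2\alpha_0}\,\dashint_{B_{R/2}}|\nabla\bar u|^2\,dx.
\end{equation*}
Lifting $T$ to its $a$-harmonic corrected counterpart and using the bound on $\nabla\psi_{\bar E}$ from Theorem~\ref{ExistenceSecondOrderCorrector} together with the growth of $\phi$ (both expressed through $\varepsilon_{2,\theta R}\leq\varepsilon_0$), the triangle inequality in $L^2(B_{\theta R})$ applied to the previous two displays yields
\begin{equation*}
\Exc_2(\theta R)\;\leq\;C\bigl(\theta^{2+2\alpha_0}+\varepsilon_0^2\bigr)\Exc_2(R).
\end{equation*}
Fixing $\alpha_0=(\alpha+1)/2>\alpha$, then choosing $\theta$ small enough that $C\theta^{2+2\alpha_0}\leq\tfrac12\theta^{2+2\alpha}$, and finally $\varepsilon_0$ small enough that $C\varepsilon_0^2\leq\tfrac12\theta^{2+2\alpha}$, closes the one-step estimate and pins down the $\varepsilon_0(d,\lambda,\alpha)$ of the theorem statement.

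The main obstacle lies in the harmonic-approximation step: the right-hand side $\divv F$ of the equation for $w$ involves the sublinearly but not boundedly growing fields $\phi$ and $\sigma$, so naive energy testing over $B_{R/2}$ yields only a bound of type $\|(\phi,\sigma)\|_{L^2(B_R)}^2\,\|\nabla^2\bar u\|_{L^\infty}^2$, whose rescaling does not fit the excess on the left. The remedy is twofold: first exploit interior $a_{hom}$-regularity of $\bar u$ to trade $\|\nabla^2\bar u\|_{L^2(B_{R/2})}^2$ against $R^{-2}\|\nabla\bar u\|_{L^2(B_R)}^2$, and second introduce a smooth dyadic cutoff of $(\phi,\sigma)$ so that the contribution of each scale $2^m$ with $r_0\lesssim 2^m\lesssim R$ enters with weight $\min\{1,2^{m+1}/R\}$. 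The resulting weighted sum is exactly $\varepsilon_{2,R}$ as defined in \eqref{DefinitionEpsilon2}, which explains why the strengthened growth condition \eqref{Epsilon2Small} rather than the bare \eqref{EpsilonSmall} is imposed. The remaining ingredients---Caccioppoli's inequality on annuli, Poincaré's inequality, and well-posedness of the Dirichlet problem for $a_{hom}$---are classical.
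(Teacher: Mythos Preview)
Your overall strategy is correct and mirrors the paper's exactly: reduce to the case $\Exc_2(R)=\dashint_{B_R}|\nabla u|^2$ by subtracting an optimal corrected second-order polynomial (which is $a$-harmonic by Proposition~\ref{ExistenceCorrectorsPolynomial}), obtain a one-step improvement via harmonic approximation combined with the second-order Taylor expansion of the $a_{hom}$-harmonic comparison function, then iterate over a geometric sequence of radii and absorb the final step by a volume factor.

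There is, however, a misattribution of where the summed quantity $\varepsilon_{2,R}$ enters, and your account of the harmonic-approximation step is inaccurate. In the paper the approximation estimate (Lemma~\ref{ApproximationConstantCoefficient})
\[
\dashint_{B_{R/2}}|\nabla u-\nabla(u_{hom}+\phi_i\partial_i u_{hom})|^2\,dx\le C(d,\lambda)\,\varepsilon_R^{2/(d+1)^2}\dashint_{B_R}|\nabla u|^2\,dx
\]
uses only the plain $\varepsilon_R$, not $\varepsilon_{2,R}$. The genuine obstacle is not the scaling of $\nabla^2 u_{hom}$ (which, as you note, already matches), but the \emph{boundary layer}: the two-scale correction $\phi_i\partial_i u_{hom}$ does not vanish on $\partial B_{R'}$, so $w$ cannot be tested directly. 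The paper handles this by inserting a cutoff $\eta$ supported in $B_{R'-\rho/2}$, paying a price in the layer of thickness $\rho$ (controlled through the higher $L^p$-integrability of $\nabla u_{hom}$ up to the boundary), and then optimizing in $\rho$; this optimization is what produces the exponent $2/(d+1)^2$. No dyadic decomposition of $(\phi,\sigma)$ is involved. The quantity $\varepsilon_{2,R}$ enters the one-step estimate \emph{only} through the bound
\[
R^{-1}\Big(\dashint_{B_R}|\nabla\psi_E|^2\,dx\Big)^{1/2}\le C(d,\lambda)\,|E|\,\varepsilon_{2,R}
\]
from Theorem~\ref{ExistenceSecondOrderCorrector}, which is needed when you add and subtract $\nabla\psi_{\bar E}$ to pass from the raw Taylor polynomial to the full corrected expression in the definition of $\Exc_2$. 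So the reason the hypothesis is \eqref{Epsilon2Small} rather than \eqref{EpsilonSmall} lies in the \emph{construction} and control of $\psi_E$, not in the harmonic approximation. This confusion is not fatal to the argument---any positive power of a small quantity suffices for the Campanato iteration to close---but your sketch of the approximation step would not yield a proof as written.
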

Our large-scale excess-decay estimate entails the following $C^{2,\alpha}$
Liouville principle.
\begin{corollary}[$C^{2,\alpha}$ Liouville principle]
\label{LiouvillePrinciple}
Let $d\geq 2$ and suppose that the assumption (\ref{Epsilon2Small}) is
satisfied. Then the following property holds: Any $a$-harmonic function $u$
satisfying the growth condition
\begin{align*}
\liminf_{r\rightarrow \infty} \frac{1}{r^{2+\alpha}}
\left(\dashint_{B_r} \left|u\right|^2 ~dx\right)^{1/2}=0
\end{align*}
for some $\alpha\in (0,1)$ is of the form
\begin{align*}
u=a+b_i (x_i+\phi_i)+E_{ij}(x_i x_j +x_i \phi_j + \phi_i x_j + \psi_{ij})
\end{align*}
with some $a\in \mathbb{R}$, $b\in \mathbb{R}^d$, and $E\in
\mathcal{E}$ (i.e. some $E\in\mathbb{R}^{d\times
d}$ for which $E_{ij} x_i x_j$ is an $a_{hom}$-harmonic polynomial).
\end{corollary}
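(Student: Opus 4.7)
The plan is to use the excess decay estimate in Theorem~\ref{C2alphaExcessDecay} to show that $\Exc_2(r)$ vanishes identically for every $r \ge r_0$, and then to extract a unique minimizing pair $(b, E)$ that yields the desired global representation of $u$. Fix an auxiliary exponent $\alpha' \in (\alpha, 1)$ and let $r_0$ denote the associated threshold from Theorem~\ref{C2alphaExcessDecay} at level $\alpha'$. Taking the trivial choice $(b, E) = (0, 0)$ in the infimum~(\ref{SecondOrderExcessFirst}), together with Caccioppoli's inequality for the $a$-harmonic function $u$ (applied to $u - \bar u$, a shift that leaves $\Exc_2$ invariant), yields
\begin{align*}
\Exc_2(R) \le \dashint_{B_R}|\nabla u|^2\,dx \le \frac{C}{R^2}\dashint_{B_{2R}} u^2\,dx.
\end{align*}
Chaining with \eqref{SecondOrderExcessDecayFinal} produces $\Exc_2(r) \le C r^{2+2\alpha'} R^{-(4+2\alpha')}\dashint_{B_{2R}} u^2\,dx$ for $r_0 \le r \le R$. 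Along a sequence $R_n \to \infty$ realizing the assumed $\liminf$ growth condition, so that $\dashint_{B_{R_n}}u^2 \le R_n^{4+2\alpha}\epsilon_n^2$ with $\epsilon_n \to 0$, this becomes $\Exc_2(r) \le C\, r^{2+2\alpha'} R_n^{2\alpha - 2\alpha'}\epsilon_n^2 \to 0$, since $\alpha' > \alpha$. Hence $\Exc_2(r) = 0$ for every $r \ge r_0$.

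The main step is to show that the minimizer $(b(r), E(r))$ realizing $\Exc_2(r) = 0$ is unique for $r \ge r_0$. Existence is automatic since the excess is a non-negative quadratic form on the finite-dimensional space $\mathbb{R}^d \times \mathcal{E}$; uniqueness reduces to showing this quadratic form has trivial kernel. So suppose $(\tilde b, \tilde E)$ lies in the kernel, meaning $\nabla\tilde v = 0$ on $B_r$, with $\tilde v := \tilde b_i(x_i + \phi_i) + \tilde E_{ij}(x_i x_j + x_i \phi_j + \phi_i x_j + \psi_{ij})$. Replacing $\tilde E$ by its symmetric part (the antisymmetric part does not affect $\tilde v$), split $\nabla\tilde v = P + Q$, where $P_i := \tilde b_i + 2\tilde E_{ij}x_j$ is the polynomial part and $Q$ collects the corrector terms involving $\nabla\phi$, $\phi$, and $\nabla\psi_{\tilde E}$. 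Direct integration gives
\begin{align*}
\dashint_{B_r}|P|^2\,dx &= |\tilde b|^2 + \tfrac{4r^2}{d+2}|\tilde E|^2,\\
\dashint_{B_r}|Q|^2\,dx &\le C(|\tilde b|^2 + |\tilde E|^2 r^2)\,\varepsilon_{2,r}^2,
\end{align*}
where the bound on $Q$ uses Caccioppoli applied to $\phi$ (giving $\dashint_{B_r}|\nabla\phi|^2 \le C\varepsilon_r^2$ and $\dashint_{B_r}|\phi|^2 \le Cr^2\varepsilon_r^2$) together with the gradient estimate for $\psi_{\tilde E}$ from Theorem~\ref{ExistenceSecondOrderCorrector}. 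The identity $P = -Q$ on $B_r$ enforces $\dashint_{B_r}|P|^2 = \dashint_{B_r}|Q|^2$, and for $\varepsilon_0$ sufficiently small this forces $\tilde b = 0$ and $\tilde E = 0$.

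With uniqueness of the minimizer at each scale, the consistency observation that $\nabla u = \nabla v_{(b(R),E(R))}$ on $B_R$ restricts to the same identity on any $B_r \subset B_R$ forces $(b(r), E(r)) = (b(R), E(R))$. The minimizer is therefore independent of $r \ge r_0$; call it $(b, E)$. Then $\nabla u = \nabla[b_i(x_i+\phi_i)+E_{ij}(x_i x_j + x_i \phi_j + \phi_i x_j + \psi_{ij})]$ on $\bigcup_{r \ge r_0}B_r = \mathbb{R}^d$, and integration yields the announced form $u = a + b_i(x_i+\phi_i) + E_{ij}(x_i x_j + x_i \phi_j + \phi_i x_j + \psi_{ij})$ for some constant $a \in \mathbb{R}$, with $E \in \mathcal{E}$ inherited from the minimization domain. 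The crux is the uniqueness step: it is precisely the sublinear smallness $\varepsilon_{2,r_0} \le \varepsilon_0$ that is needed to ensure the polynomial part $P$ dominates the corrector part $Q$, so that the coefficients of the representation are determined by $u$.
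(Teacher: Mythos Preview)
Your overall architecture matches the paper's: use Caccioppoli and the excess decay (Theorem~\ref{C2alphaExcessDecay}) to force $\Exc_2(r)=0$ for all $r\ge r_0$, then argue that the minimizers at different scales coincide, and integrate. The first part is fine (the detour through an auxiliary $\alpha'>\alpha$ is unnecessary since the $\liminf$ already provides a sequence with $\epsilon_n\to 0$, but it does no harm).

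The gap is in your uniqueness step. You claim that Caccioppoli applied to $\phi$ yields $\dashint_{B_r}|\nabla\phi|^2\le C\varepsilon_r^2$; this is false. The $a$-harmonic object is $x_i+\phi_i$, and Caccioppoli gives only
\[
\dashint_{B_r}|\Id+(\nabla\phi)^t|^2\;\le\;\frac{C}{r^2}\dashint_{B_{2r}}|x+\phi|^2\;\le\;C(1+\varepsilon_{2r}^2),
\]
so $\dashint_{B_r}|\nabla\phi|^2$ is $O(1)$, not $O(\varepsilon_r^2)$ (indeed for periodic media it converges to the nonzero ensemble average $\langle|\nabla\phi|^2\rangle$). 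Since your remainder $Q$ contains the terms $\tilde b_i\nabla\phi_i$ and $2\tilde E_{ij}x_j\nabla\phi_i$, the estimate $\dashint_{B_r}|Q|^2\le C(|\tilde b|^2+r^2|\tilde E|^2)\varepsilon_{2,r}^2$ does not follow, and the comparison $\dashint|P|^2=\dashint|Q|^2$ no longer forces $(\tilde b,\tilde E)=0$.

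The fix is to drop one derivative before splitting. Apply Poincar\'e (mean-zero) to $\nabla\tilde v=0$ to get $\inf_{c}\dashint_{B_r}|\tilde v-c|^2=0$, and only then separate the polynomial part $\tilde b_ix_i+\tilde E_{ij}x_ix_j$ from the corrector part $\tilde b_i\phi_i+\tilde E_{ij}(x_i\phi_j+\phi_ix_j+\psi_{ij})$. At the function level the corrector terms involve $\phi$ and $\psi$ but \emph{not} $\nabla\phi$, so they are genuinely controlled by $\varepsilon_r$ and $\varepsilon_{\tilde\psi}$; this is exactly the content of Lemma~\ref{QuantifiedLinearIndependenceLemma}. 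Alternatively, you can bypass the uniqueness argument entirely and invoke estimate~\eqref{EstimateEbDiff} of Lemma~\ref{C2alphaExcessDecayGeneral}, which gives $|b^{r_1,\min}-b^{r_2,\min}|+R|E^{r_1,\min}-E^{r_2,\min}|\le C\sqrt{\Exc_2(\max(r_1,r_2))}=0$ directly; this is how the paper proceeds.
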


Let us start with the proof of Proposition
\ref{ExistenceCorrectorsPolynomial}, which only requires a simple computation.
\begin{proof}[Proof of Proposition \ref{ExistenceCorrectorsPolynomial}]
Making use of the fact that $E_{ij}((a_{hom})_{ij}+(a_{hom})_{ji})=0$ (in the
third step below), we compute
\begin{align*}
&E_{ij} \divv (\sigma_{ij}+\sigma_{ji})+E_{ij}\divv a
(\phi_i e_j+\phi_j e_i)
\\&
\stackrel{\eqref{EquationSigma}}{=}
E_{ij} q_{ij}+ E_{ij} q_{ji}
+E_{ij}\divv a (\phi_i e_j+\phi_j e_i)
\\&
\stackrel{\eqref{Equationq}}{=}
E_{ij} (a_{jk}((\Id)_{ik}+\partial_k \phi_i)-(a_{hom})_{ji})
+E_{ij} (a_{ik}((\Id)_{jk}+\partial_k \phi_j)-(a_{hom})_{ij})
\\&~~~
+E_{ij}\divv a (\phi_i e_j+\phi_j e_i)
\\&
=
E_{ij} (a_{jk}(\partial_k x_i +\partial_k \phi_i)
+a_{ik}(\partial_k x_j+\partial_k \phi_j))
\\&~~~
+E_{ij}\divv a (\phi_i \nabla x_j +\phi_j \nabla x_i)
\\&
=
E_{ij} (a\nabla (x_i +\phi_i) \cdot \nabla x_j
+a\nabla (x_j+\phi_j) \cdot \nabla x_i)
+E_{ij}\divv a (\phi_i \nabla x_j +\phi_j \nabla x_i)
\\&
\stackrel{\eqref{EquationCorrector}}{=}
E_{ij} \divv (x_j a\nabla (x_i +\phi_i)+x_i a\nabla (x_j+\phi_j))
+E_{ij}\divv a (\phi_i \nabla x_j +\phi_j \nabla x_i).
\end{align*}
We therefore obtain
\begin{align*}
&E_{ij} \divv (\sigma_{ij}+\sigma_{ji})+E_{ij}\divv a
(\phi_i e_j+e_i \phi_j)
\\&
=E_{ij} \divv a\nabla (x_i x_j + x_i \phi_j + \phi_i x_j),
\end{align*}
which together with \eqref{EquationPsiE} implies our proposition.
\end{proof}

\subsection{The $C^{2,\alpha}$ excess-decay estimate}

To establish our $C^{2,\alpha}$  excess-decay estimate, we make use of the
following lemma, which essentially generalizes Theorem~\ref{C2alphaExcessDecay}
to correctors which are only available on balls $B_R$.
\begin{lemma}
\label{C2alphaExcessDecayGeneral}
Let $d\geq 2$. For any $E\in
\mathcal{E}$, denote by $\tilde \psi_E$ a solution to the equation of
the second-order corrector \eqref{EquationPsiE} on the ball $B_R$ (without
boundary conditions); assume that $\tilde \psi_E$ depends linearly on $E$.
Set
\begin{align}
\label{DefinitionEpsPsiTilde}
\varepsilon_{\tilde\psi,r,R}:=\sup_{r\leq \rho\leq R} \rho^{-1}
\left(\max_{E\in \mathcal{E}, |E|=1} 
\dashint_{B_\rho} |\nabla \tilde \psi_E|^2 ~dx\right)^{1/2}.
\end{align}
For an $a$-harmonic function $u$ in $B_R$, consider the second-order excess
\begin{align}
\label{SecondOrderExcess}
&\widetilde\Exc_2(r):=
\\ \nonumber
&~~\inf_{b\in \mathbb{R}^d, E\in \mathcal{E}}
\dashint_{B_r} \left|\nabla u-\nabla \big(b_i(x_i+\phi_i)
+E_{ij}(x_i x_j + x_i \phi_j + \phi_i x_j +\tilde\psi_{ij})\big)\right|^2 ~dx.
\end{align}

For any $0<\alpha<1$ there exists a constant $\varepsilon_{min}>0$
depending only on $d$, $\lambda$, and $\alpha$ such that the following
assertion holds:

Suppose that $r_0>0$ satisfies
$\varepsilon_{r_0}+\varepsilon_{\tilde\psi,r_0,R}\leq \varepsilon_{min}$.
Then for all $r\in [r_0,R]$ the $C^{2,\alpha}$ excess-decay estimate
\begin{align}
\label{SecondOrderExcessDecay}
\widetilde\Exc_2(r)\leq C(d,\lambda,\alpha)\left(\frac{r}{R}\right)^{2+2\alpha}
\widetilde\Exc_2(R)
\end{align}
is satisfied.

Note that the infimum in \eqref{SecondOrderExcess} is actually attained, as the
average integral in the definition of $\widetilde\Exc_{2}(\rho)$ is a quadratic
functional of $b$ and $E$. Denote by $b^{\rho,min}$ and $E^{\rho,min}$ a
corresponding optimal choice of $b$ and $E$ in \eqref{SecondOrderExcess}. We
then have the estimates
\begin{align}
\label{EstimateEbDiff}
R^2|E^{r,min}-E^{R,min}|^2+|b^{r,min}-b^{R,min}|^2
\leq C(d,\lambda,\alpha) \widetilde\Exc_2(R)
\end{align}
and
\begin{align}
\label{EstimateEbSize}
R^2|E^{r,min}|^2+|b^{r,min}|^2
\leq C(d,\lambda,\alpha) \dashint_{B_R} |\nabla u|^2 ~dx.
\end{align}
\end{lemma}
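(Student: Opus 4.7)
The plan is to implement the classical Campanato iteration in the style of Avellaneda--Lin, adapted to our setting with correctors. The engine of the argument is a \emph{one-step} improvement of excess: there is a constant $\theta=\theta(d,\lambda,\alpha)\in(0,1/4)$ such that, provided $\varepsilon_{r_0}+\varepsilon_{\tilde\psi,r_0,R}\leq\varepsilon_{min}$ is small enough, one has
\begin{align*}
\widetilde\Exc_2(\theta\rho)\leq \theta^{2+2\alpha}\widetilde\Exc_2(\rho)
\qquad\text{for all }\rho\in[r_0/\theta,R].
\end{align*}
Iterating this geometric decay across dyadic scales from $R$ down to $r$ gives \eqref{SecondOrderExcessDecay}, while comparing the near-optimal $(b,E)$ at consecutive scales and summing yields \eqref{EstimateEbDiff} and \eqref{EstimateEbSize}.

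For the one-step improvement at scale $\rho$, let $(b^{\rho,min},E^{\rho,min})$ realize the infimum in \eqref{SecondOrderExcess}, and set
\begin{align*}
v:=u-b^{\rho,min}_i(x_i+\phi_i)-E^{\rho,min}_{ij}(x_i x_j+x_i\phi_j+\phi_i x_j+\tilde\psi_{ij}).
\end{align*}
By Proposition~\ref{ExistenceCorrectorsPolynomial}, $v$ is $a$-harmonic on $B_R$, and $\dashint_{B_\rho}|\nabla v|^2=\widetilde\Exc_2(\rho)$. I would then \emph{harmonically approximate} $v$ on $B_\rho$: construct $\bar v$ that is $a_{hom}$-harmonic on, say, $B_{\rho/2}$, shares boundary data with $v$ in an appropriate sense, and satisfies
\begin{align*}
\dashint_{B_{\rho/2}}|\nabla v-\nabla\bar v|^2\,dx\leq C(d,\lambda)\bigl(\varepsilon_\rho+\varepsilon_{\tilde\psi,\rho,R}\bigr)^{2\gamma}\,\widetilde\Exc_2(\rho)
\end{align*}
for some $\gamma>0$; the proof tests the equation for $v$ against $w-\phi_i\partial_i w-\dots$ where $w$ is $a_{hom}$-harmonic, and the error terms are exactly of the form $(\phi_i a-\sigma_i)\nabla\partial_i w$ and $a\nabla\tilde\psi_E$, controlled by $\varepsilon_\rho$ and $\varepsilon_{\tilde\psi,\rho,R}$ via Cauchy--Schwarz and (in the case of the corrector terms) the multiplicative scale structure of $\varepsilon_{2,r}$ in \eqref{DefinitionEpsilon2}.

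Now $\bar v$, being $a_{hom}$-harmonic, enjoys classical interior $C^{2,\alpha'}$-estimates for every $\alpha'<1$. Expanding $\bar v$ by Taylor to second order around $0$ produces a constant $\bar a$, a vector $\bar b\in\mathbb{R}^d$, and a symmetric matrix $\bar E$; since $\bar v$ is $a_{hom}$-harmonic, $\bar E$ satisfies $\bar E_{ij}(a_{hom})_{ij}=0$, hence $\bar E\in\mathcal{E}$, so that $(b^{\rho,min}+\bar b,\, E^{\rho,min}+\bar E)$ is admissible in \eqref{SecondOrderExcess} at scale $\theta\rho$. The Taylor remainder bound gives
\begin{align*}
\dashint_{B_{\theta\rho}}|\nabla\bar v-\nabla(\bar b_i x_i+\bar E_{ij} x_i x_j)|^2\,dx
\leq C\theta^{2+2\alpha}\dashint_{B_{\rho/2}}|\nabla\bar v|^2\,dx
\leq C\theta^{2+2\alpha}\widetilde\Exc_2(\rho);
\end{align*}
combining with the harmonic approximation error and choosing first $\theta$ small (to beat the generic constant in the $\theta^{2+2\alpha}$ term, with $\alpha<1$ leaving room) and then $\varepsilon_{min}$ small (so that the harmonic approximation error is absorbed into $\theta^{2+2\alpha}\widetilde\Exc_2(\rho)$) closes the one-step estimate.

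The main technical obstacle, as usual in this framework, is the harmonic approximation step: one has to choose the competitor $\bar v$ and the auxiliary $a_{hom}$-harmonic test functions on a ball strictly inside $B_\rho$ so that the corrector terms $\phi_i,\sigma_i,\tilde\psi_{ij}$ can be localized (cut off) without generating boundary contributions that exceed the excess. This is handled by the standard device of a Lipschitz cutoff supported in an annulus $B_\rho\setminus B_{\rho/2}$, at the price of picking up $\dashint_{B_\rho}|\phi|^2+|\sigma|^2+|\tilde\psi_E|^2$ (rescaled), which is exactly $\rho^2(\varepsilon_\rho^2+\varepsilon_{\tilde\psi,\rho,R}^2)$. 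Finally, \eqref{EstimateEbDiff} follows by testing the difference of the two optimal affine-plus-quadratic corrections against itself on $B_r$ (using non-degeneracy of the quadratic form $b\mapsto\dashint_{B_r}|b|^2$ and $E\mapsto\dashint_{B_R}|Ex|^2$ for small $\varepsilon_r$, which makes the corrector perturbations negligible), and \eqref{EstimateEbSize} is then the $b=0$, $E=0$ instance of the same non-degeneracy estimate applied to $b^{R,min}$ and $E^{R,min}$ themselves.
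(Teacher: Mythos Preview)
Your overall architecture matches the paper's proof exactly: a one-step excess improvement (the paper isolates it as Lemma~\ref{ExcessDecayLemma}) based on harmonic approximation plus Taylor expansion of the $a_{hom}$-harmonic comparison function, followed by geometric iteration in $\theta$, and a telescoping sum over consecutive scales combined with the non-degeneracy of corrected polynomials (the paper's Lemma~\ref{QuantifiedLinearIndependenceLemma}) to obtain \eqref{EstimateEbDiff} and \eqref{EstimateEbSize}.

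There is, however, a genuine gap in your harmonic approximation step. You claim to find an $a_{hom}$-harmonic $\bar v$ with $\dashint_{B_{\rho/2}}|\nabla v-\nabla\bar v|^2$ small. This is false: since $a$ oscillates, $\nabla v$ carries $O(1)$ oscillations that no constant-coefficient harmonic function can match in $H^1$. What holds (and what the paper establishes as Lemma~\ref{ApproximationConstantCoefficient}) is that the \emph{two-scale expansion} $\bar v+\phi_i\partial_i\bar v$ approximates $v$ in $H^1$, with error controlled by $\varepsilon_\rho$ alone. The distinction is not cosmetic. If you only controlled $\nabla v-\nabla\bar v$ and then tried to pass from the bare Taylor polynomial $\bar b_ix_i+\bar E_{ij}x_ix_j$ to the corrected polynomial that the excess demands, the mismatch would cost you $|\bar b|^2\dashint|\nabla\phi|^2\lesssim\widetilde\Exc_2(\rho)$ with \emph{no} factor of $\theta$, and the iteration would not close. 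The paper avoids this via the algebraic identity
\[
(\Id+(\nabla\phi)^t)\nabla\bar v-\nabla\big(\bar b_i(x_i+\phi_i)+\tfrac12\bar E_{ij}(x_ix_j+x_i\phi_j+\phi_ix_j)\big)+\tfrac12\bar E_{ij}(\phi_je_i+\phi_ie_j)
=(\Id+(\nabla\phi)^t)\big(\nabla\bar v-\bar b-\bar E x\big),
\]
so that the $\nabla\phi$ factor multiplies only the Taylor \emph{remainder} and hence contributes at order $\theta^4$. Two smaller corrections: the Taylor remainder gives $C\theta^4$, and it is precisely the gap $4>2+2\alpha$ that lets you choose $\theta$ small enough to absorb $C$ (your displayed $C\theta^{2+2\alpha}$ would not suffice); and $\varepsilon_{\tilde\psi,\rho,R}$ does not enter the harmonic approximation at all --- it appears only afterwards, as the additive error $|\bar E|^2\dashint_{B_{r}}|\nabla\tilde\psi|^2$ incurred when appending $\tilde\psi_{ij}$ to the corrected polynomial.
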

\begin{proof}[Proof of Theorem \ref{C2alphaExcessDecay}]
Theorem \ref{C2alphaExcessDecay} obviously follows from Lemma
\ref{C2alphaExcessDecayGeneral} by setting $\tilde\psi_E:=\psi_E$, with $\psi_E$
being the second-order corrector whose existence is guaranteed by Theorem
\ref{ExistenceSecondOrderCorrector}.
\end{proof}

The following lemma is essentially a special case of our $C^{2,\alpha}$
large-scale excess-decay estimate Lemma \ref{C2alphaExcessDecayGeneral};
it entails the general case of Lemma \ref{C2alphaExcessDecayGeneral}, cf.
below.
\begin{lemma}
\label{ExcessDecayLemma}
Let $d\geq 2$ and let $R,r>0$ satisfy $r<R/4$ and $\varepsilon_R\leq 1$.
For any $E\in \mathcal{E}$,
denote by $\tilde \psi_E$ a solution to the equation of the second-order
corrector \eqref{EquationPsiE} on the ball $B_R$ (without boundary
conditions); assume that $\tilde \psi_E$ depends linearly on $E$.
For an $a$-harmonic function $u$ in $B_R$,
consider again the second-order excess \eqref{SecondOrderExcess}.
Then the excess on the smaller ball $B_r$ is estimated in terms of the excess
on the larger ball $B_R$ and our quantities $\varepsilon_R$ and
$\nabla \tilde \psi_E$:
We have
\begin{align*}
\widetilde\Exc_{2}(r)
&\leq C(d,\lambda)
\left[
\left(\frac{r}{R}\right)^{4}
+\left(\varepsilon_R^{2/(d+1)^2}+
R^{-2}\max_{E\in \mathcal{E},|E|=1}
\dashint_{B_R} |\nabla \tilde \psi_E|^2 ~dx
\right)\left(\frac{r}{R}\right)^{-d}
\right]
\\&~~~~~~\times
\widetilde\Exc_{2}(R).
\end{align*}
\end{lemma}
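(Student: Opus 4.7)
The plan is to run a one-step Campanato/harmonic-approximation argument: approximate $u$ on $B_{R/2}$ by an $a_{hom}$-harmonic function $v$ via the (modified) two-scale expansion, expand $v$ to second order using constant-coefficient interior regularity, and then read off the excess on $B_r$ by triangle inequality. Before starting, I observe that $x_i+\phi_i$ is $a$-harmonic by \eqref{EquationCorrector} and $P_2+\phi_i\partial_i P_2+\tilde\psi_{P_2}$ is $a$-harmonic for any $P_2\in\Pa{2}$ by Proposition~\ref{ExistenceCorrectorsPolynomial}. Subtracting $b^{R,\min}_i(x_i+\phi_i)+E^{R,\min}_{ij}(x_ix_j+x_i\phi_j+\phi_ix_j+\tilde\psi_{ij})$ from $u$, I may therefore assume without loss of generality that $\widetilde\Exc_2(R)=\dashint_{B_R}|\nabla u|^2\,dx$.

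Step 1 (harmonic approximation). I would construct an $a_{hom}$-harmonic function $v$ on $B_{R/2}$ satisfying
\[
\dashint_{B_{R/2}}\bigl|\nabla u-\nabla v-(\partial_i v)\nabla\phi_i\bigr|^2\,dx
\;\leq\; C(d,\lambda)\,\varepsilon_R^{2/(d+1)^2}\,\dashint_{B_R}|\nabla u|^2\,dx,
\]
together with $\dashint_{B_{R/2}}|\nabla v|^2\,dx\leq C\dashint_{B_R}|\nabla u|^2\,dx$. The construction tests the equation for $u-v-\eta\phi_i\partial_iv$ against itself, with $\eta$ a cut-off supported in $B_{R/2}$, and bounds the boundary-layer commutator $|\nabla\eta|\,|\phi|\,|\nabla v|$ by H\"older's inequality using a Meyers-type improved $L^{2+\delta}$ integrability of $\nabla v$; the unusual exponent $2/(d+1)^2$ is precisely the outcome of this interpolation chain. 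This is the standard Avellaneda--Lin style harmonic approximation as adapted to the random setting in \cite{GloriaNeukammOtto}.

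Step 2 (Taylor expansion of $v$). Because $v$ is $a_{hom}$-harmonic on $B_{R/2}$, constant-coefficient interior regularity applied at the third order yields a second-order Taylor polynomial $b\cdot x+E_{ij}x_ix_j$ at the origin, with $b=\nabla v(0)$ and $E=\tfrac12\nabla^2 v(0)$; note that $E\in\mathcal{E}$ is automatic from $\nabla\cdot a_{hom}\nabla v=0$. One obtains
\[
\dashint_{B_r}\bigl|\nabla v-b-2E_{ij}x_je_i\bigr|^2\,dx
\;\leq\; C\Bigl(\tfrac{r}{R}\Bigr)^{4}\dashint_{B_{R/2}}|\nabla v|^2\,dx,
\]
together with $|b|^2+R^2|E|^2\leq C\dashint_{B_{R/2}}|\nabla v|^2\,dx$.

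Step 3 (assembly). I plug this $(b,E)$ into the infimum defining $\widetilde\Exc_2(r)$ and decompose the integrand as the sum of (a) $\nabla u-\nabla v-(\partial_iv)\nabla\phi_i$, controlled on $B_{R/2}$ by Step 1 and then transferred to $B_r$ at the cost of a volume factor $(R/r)^d$; (b) the Taylor residual $\nabla v-b-2E_{ij}x_je_i$, controlled by Step 2; and (c) the corrector remainder $(\partial_iv-b_i-2E_{ij}x_j)\nabla\phi_i-2E_{ij}\phi_je_i-E_{ij}\nabla\tilde\psi_{ij}$. In (c) the first piece pairs $\nabla\phi_i$ against a quantity bounded by the Taylor residual of (b), contributing an additional $\varepsilon_R^2$ factor via the Caccioppoli/energy bound for $\phi_i$ on $B_R$; the middle piece is bounded crudely by $|E|^2\cdot\dashint_{B_r}|\phi|^2\leq C|E|^2R^2\varepsilon_R^2\cdot(R/r)^d$; and the last piece produces exactly the $R^{-2}\max_{|E|=1}\dashint_{B_R}|\nabla\tilde\psi_E|^2$ term in the statement, again modulo the $(R/r)^d$ volume loss. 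Collecting the two resulting groups yields the stated bound.

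The main obstacle is Step 1: producing the genuinely \emph{small} factor $\varepsilon_R^{2/(d+1)^2}$ rather than merely $\varepsilon_R$ (which is only $\leq 1$ and would not beat the $(r/R)^{-d}$ volume loss). Everything else is constant-coefficient regularity plus triangle-inequality bookkeeping, so the whole argument stands or falls with the harmonic-approximation estimate and the Meyers/Gehring improved integrability that underlies it.
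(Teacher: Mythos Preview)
Your approach is correct and essentially identical to the paper's: reduce to $\widetilde\Exc_2(R)=\dashint_{B_R}|\nabla u|^2\,dx$, invoke the harmonic approximation (this is exactly Lemma~\ref{ApproximationConstantCoefficient} in the appendix), Taylor-expand the $a_{hom}$-harmonic approximant to second order, and assemble via the same triangle-inequality splitting into the two-scale error, the Taylor remainder, and the $\phi$/$\tilde\psi$ leftovers. Two small inaccuracies that do not affect the argument: in Step~3(c) the Caccioppoli bound for $\nabla\phi$ on $B_r$ gives only $O(1)$ (namely $1+\varepsilon_{2r}^2$), not an extra factor $\varepsilon_R^2$ --- the smallness of that piece comes entirely from the $(r/R)^4$ Taylor residual; and in Step~1 the higher integrability underlying the exponent $2/(d+1)^2$ is a Calder\'on--Zygmund $L^{2d/(d-1)}$ bound for the \emph{constant-coefficient} $a_{hom}$-harmonic $v$, not a Meyers estimate.
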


Before proving Lemma \ref{ExcessDecayLemma}, we would like to show how it
implies Lemma \ref{C2alphaExcessDecayGeneral}.
\begin{proof}[Proof of Lemma \ref{C2alphaExcessDecayGeneral}]
First choose $0<\theta\leq 1/4$ so small that the strict inequality
$C(d,\lambda)\theta^4<\theta^{2+2\alpha}$ is satisfied (with $C(d,\lambda)$
being the constant from Lemma \ref{ExcessDecayLemma}). Then, choose the
threshold $\varepsilon_{min}$ for
$\varepsilon_{r_0}+\varepsilon_{\tilde\psi,r_0,R}$ so small that the estimate
\begin{align*}
C(d,\lambda)
\left[\theta^4
+\left(\varepsilon_{r_0}^{2/(d+1)^2}
+\varepsilon_{\tilde\psi,r_0,R}^2
\right)\theta^{-d}
\right]
\leq \theta^{2+2\alpha}
\end{align*}
holds.

Let $M$ be the largest integer for which $\theta^M R \geq r$ holds.
Applying Lemma \ref{ExcessDecayLemma} inductively with $R_m:=\theta^{m-1} R$,
$r_m:=\theta^m R$ for $1\leq m\leq M$, we infer
\begin{align*}
\widetilde\Exc_2(\theta^M R)\leq (\theta^{2+2\alpha})^M \widetilde\Exc_2(R).
\end{align*}
Since we have trivially
\begin{align*}
\widetilde\Exc_2(r)\leq \left(\frac{r}{r_M}\right)^{-d} \widetilde\Exc_2(r_M)
\end{align*}
and since by definition of $M$ we have $r>\theta r_M$ and thus
$\theta^M<\theta^{-1}\frac{r}{R}$ (where we recall
$\theta=\theta(d,\lambda,\alpha)$), we infer
\begin{align*}
\widetilde\Exc_2(r)\leq C(d,\lambda,\alpha)\left(\frac{r}{R}\right)^{2+2\alpha}
\widetilde\Exc_2(R).
\end{align*}
It remains to show the estimates for $|b^{r,min}-b^{R,min}|$ and 
$|E^{r,min}-E^{R,min}|$ as well as the bounds for $|b^{r,min}|$ and
$|E^{r,min}|$. To do so, let us first estimate the differences
$|b^{R_m,min}-b^{r_m,min}|$ and $|E^{R_m,min}-E^{r_m,min}|$.
We have the estimate
\begin{align*}
&\dashint_{B_{r_m}} \big|\nabla (b_i^{R_m,min}-b_i^{r_m,min})
(x_i+\phi_i)
\\&~~~~~~~~~~~
+\nabla (E_{ij}^{R_m,min}-E_{ij}^{r_m,min}) (x_i x_j+x_i \phi_j+\phi_i x_j
+\tilde\psi_{ij})\big|^2
~dx
\\&
\leq
2\dashint_{B_{r_m}} \left|\nabla u-\nabla b_i^{r_m,min}(x_i+\phi_i)
-\nabla E_{ij}^{r_m,min}(x_i x_j + x_i \phi_j + \phi_i x_j + \tilde
\psi_{ij}) \right|^2 ~dx
\\&~~~
+2\dashint_{B_{r_m}} \left|\nabla u-\nabla b_i^{R_m,min}(x_i+\phi_i)
-\nabla E_{ij}^{R_m,min}(x_i x_j + x_i \phi_j + \phi_i x_j + \tilde \psi_{ij})
\right|^2 ~dx
\\&
\leq
2\widetilde\Exc_2(r_m)
+2\left(\frac{R_m}{r_m}\right)^d \widetilde\Exc_2(R_m)
\\&
\leq
C(d,\lambda,\alpha)
\left(\frac{r_m}{R}\right)^{2+2\alpha}
\widetilde\Exc_2(R)
+C(d,\lambda,\alpha)\theta^{-d}\left(\frac{R_m}{R}\right)^{2+2\alpha}
\widetilde\Exc_2(R)
\\&
\leq
C(d,\lambda,\alpha)
\left(\frac{r_m}{R}\right)^2
(\theta^{2\alpha})^m \widetilde\Exc_2(R).
\end{align*}
From Lemma \ref{QuantifiedLinearIndependenceLemma} below, we thus obtain
\begin{align*}
|b^{R_m,min}-b^{r_m,min}|
+R |E^{R_m,min}-E^{r_m,min}|
\leq
C(d,\lambda,\alpha) (\theta^{\alpha})^m \sqrt{\widetilde\Exc_2(R)}.
\end{align*}
Note that a similar estimate for the last increment $|b^{r_M,min}-b^{r,min}|+R
|E^{r_M,min}-E^{r,min}|$ can be derived analogously. Taking the sum with respect
to $m$ and recalling that $R_1=R$ and $r_m=R_{m+1}$, we finally deduce
\begin{align*}
|b^{R,min}-b^{r,min}|+R|E^{R,min}-E^{r,min}|
&\leq C(d,\lambda,\alpha) \sum_{m=0}^M (\theta^{\alpha})^m \sqrt{\widetilde
\Exc_2(R)}
\\
&\leq C(d,\lambda,\alpha) \sqrt{\widetilde
\Exc_2(R)}.
\end{align*}
It only
remains to establish the last estimate for $|b^{r,min}|$ and $|E^{r,min}|$.
By the previous estimate, it is sufficient to prove the corresponding
bound for $b^{R,min}$ and $E^{R,min}$. This in turn is a consequence of the
inequality
\begin{align*}
&\dashint_{B_R} \left|\nabla b_i^{R,min} (x_i+\phi_i)+\nabla E_{ij}^{R,min}
(x_i x_j+x_i \phi_j+\phi_i x_j+\tilde\psi_{ij})\right|^2 ~dx
\\&
\leq 2\widetilde\Exc_2(R)+2\dashint_{B_R} |\nabla u|^2 ~dx
\leq 4\dashint_{B_R} |\nabla u|^2 ~dx
\end{align*}
together with Lemma \ref{QuantifiedLinearIndependenceLemma} below.
\end{proof}
The following lemma quantifies the linear independence of the corrected
polynomials $x_i+\phi_i$, $E_{ij}(x_i x_j+x_i \phi_j+\phi_i
x_j+\tilde\psi_{ij})$; it is needed in the previous proof.
\begin{lemma}
\label{QuantifiedLinearIndependenceLemma}
Suppose that for every $E\in \mathcal{E}\setminus \{0\}$, the functions $\phi$
and $\tilde \psi_E$ satisfy
\begin{align*}
\rho^{-2}\dashint_{B_\rho} |\phi|^2 ~dx
+\rho^{-2}|E|^{-2}\dashint_{B_\rho} |\nabla \tilde \psi_E|^2 ~dx
\leq \varepsilon_0^2,
\end{align*}
where $\varepsilon_0=\varepsilon_0(d)$ is to be defined in the proof below.
Then for any $b\in \mathbb{R}^d$ and any $E\in \mathcal{E}$, we have the
estimate
\begin{align}
\nonumber
&|b|^2+\rho^2 |E|^2
\\&
\label{QuantifiedLinearIndependence}
\leq C(d)\dashint_{B_\rho} |\nabla b_i(x_i+\phi_i)+\nabla E_{ij} (x_i
x_j+x_i \phi_j+\phi_i x_j+\tilde\psi_{ij})|^2 ~dx.
\end{align}
\end{lemma}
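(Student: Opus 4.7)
The plan is to exploit Poincar\'e--Wirtinger to convert the desired lower bound on $\dashint_{B_\rho} |\nabla v_{b,E}|^2\,dx$ (where $v_{b,E}$ denotes the corrected polynomial on the right-hand side of \eqref{QuantifiedLinearIndependence}) into a lower bound on $\dashint_{B_\rho} |v_{b,E}-\overline{v_{b,E}}|^2\,dx$. This detour is essential because the hypothesis controls $\phi$ only in $L^2$, not $\nabla\phi$; any attempt to estimate $\nabla \tilde v$ directly would fail.

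I would decompose $v_{b,E}=v_0+\tilde v$ with $v_0(x):=b_i x_i + E_{ij}x_i x_j$ the uncorrected polynomial and $\tilde v:=b_i\phi_i+E_{ij}(x_i\phi_j+\phi_i x_j+\tilde\psi_{ij})$ the corrector perturbation. For the main term $v_0$, a direct moment computation on $B_\rho$ (using odd/even symmetry to kill the cross term between $b_i x_i$ and the quadratic piece, together with $\dashint_{B_\rho} x_ix_j\,dx=\frac{\rho^2}{d+2}\delta_{ij}$ and $\dashint_{B_\rho}x_ix_jx_kx_l\,dx=\frac{\rho^4}{(d+2)(d+4)}(\delta_{ij}\delta_{kl}+\delta_{ik}\delta_{jl}+\delta_{il}\delta_{jk})$) yields
$$\dashint_{B_\rho}|v_0-\bar v_0|^2\,dx = \frac{\rho^2}{d+2}|b|^2 + \frac{2\rho^4}{(d+2)(d+4)}\Bigl(|E|^2-\frac{(\operatorname{tr} E)^2}{d+2}\Bigr).$$
Identifying $E$ with its symmetric part (the right-hand side of \eqref{QuantifiedLinearIndependence} is invariant under adding an antisymmetric matrix to $E$) and invoking $(\operatorname{tr} E)^2\le d|E|^2$, this is bounded below by $c(d)(\rho^2|b|^2+\rho^4|E|^2)$.

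For the perturbation $\tilde v$, I would use Poincar\'e--Wirtinger on $B_\rho$ together with a triangle-type split into three pieces. The $b_i\phi_i$ piece satisfies $\dashint|b_i\phi_i|^2\le |b|^2\dashint|\phi|^2 \le \rho^2|b|^2\varepsilon_0^2$; the $E_{ij}(x_i\phi_j+\phi_i x_j)$ piece satisfies $\le 4|E|^2\rho^2\dashint|\phi|^2 \le 4\rho^4|E|^2\varepsilon_0^2$; the $\tilde\psi_E$ piece---whose additive constant is harmless because we work with deviations from the mean---satisfies $\dashint|\tilde\psi_E-\overline{\tilde\psi_E}|^2 \le C\rho^2\dashint|\nabla\tilde\psi_E|^2 \le C\rho^4|E|^2\varepsilon_0^2$ by Poincar\'e applied to $\tilde\psi_E$ itself. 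Each contribution is $\le C(\rho^2|b|^2+\rho^4|E|^2)\varepsilon_0^2$.

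Assembling via $(a+b)^2\ge\tfrac12 a^2-b^2$ and then Poincar\'e--Wirtinger,
$$\dashint_{B_\rho}|\nabla v_{b,E}|^2\,dx \ge \frac{c(d)}{\rho^2}\Bigl[\tfrac12\dashint|v_0-\bar v_0|^2\,dx - \dashint|\tilde v-\overline{\tilde v}|^2\,dx\Bigr] \ge \frac{c(d)}{\rho^2}\bigl(c_1(d)-C\varepsilon_0^2\bigr)\bigl(\rho^2|b|^2+\rho^4|E|^2\bigr),$$
and choosing $\varepsilon_0=\varepsilon_0(d)$ small enough to absorb the $C\varepsilon_0^2$ term recovers \eqref{QuantifiedLinearIndependence}. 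The only genuine subtlety---and the content of the argument---is the detour through $|v-\bar v|^2$ that sidesteps the missing bound on $\nabla\phi$; the remainder is elementary polynomial algebra plus the additive freedom in $\tilde\psi_E$.
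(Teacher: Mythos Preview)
Your argument is correct and follows essentially the same route as the paper: pass from $\nabla v_{b,E}$ to $v_{b,E}-\overline{v_{b,E}}$ via Poincar\'e, split into the bare polynomial $v_0$ and the corrector perturbation $\tilde v$, bound $v_0$ from below by transversality of homogeneous polynomials, bound $\tilde v$ from above using the smallness hypothesis, and absorb. The only cosmetic differences are that you carry out the moment computation for $v_0$ explicitly (the paper simply invokes ``transversality of constant, linear, and quadratic functions'') and that you use $(a+b)^2\ge\tfrac12 a^2-b^2$ where the paper uses the triangle inequality at the level of $L^2$-norms; neither changes the substance.

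One remark on your parenthetical about identifying $E$ with its symmetric part: you have in fact put your finger on a small imprecision in the lemma's statement. Since every term on the right-hand side of \eqref{QuantifiedLinearIndependence} depends only on the symmetric part of $E$ (the equation \eqref{EquationPsiE} for $\tilde\psi_E$ is symmetric in $i,j$), the inequality as literally written cannot hold for $E$ with nonzero antisymmetric part. The paper's own proof shares this gap. In all applications the relevant $E$ are symmetric (e.g.\ $E^{R,Taylor}=\nabla^2 u_{hom}(0)$), so your reduction to symmetric $E$ is exactly the right fix and nothing is lost.
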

\begin{proof}
Poincar\'e's inequality (with zero mean) and the triangle inequality imply
\begin{align*}
&\left(\dashint_{B_\rho} |\nabla b_i(x_i+\phi_i)+\nabla E_{ij} (x_i
x_j+x_i \phi_j+\phi_i x_j+\tilde\psi_{ij})|^2 ~dx\right)^{1/2}
\\
&\geq
\frac{1}{C(d)} \frac{1}{\rho}
\inf_{a\in \mathbb{R}}
\left(\dashint_{B_\rho} |b_i(x_i+\phi_i)
+E_{ij}(x_i x_j+x_i \phi_j+\phi_i x_j+\tilde\psi_{ij})-a|^2 ~dx\right)^{1/2}
\\
&\geq
\frac{1}{C(d)} \frac{1}{\rho}
\Bigg[
\inf_{a\in \mathbb{R}}
\left(\dashint_{B_\rho} |b_i x_i
+E_{ij}x_i x_j-a|^2 ~dx\right)^{1/2}
\\&~~~~~~~~~~~~~~
-\inf_{a\in \mathbb{R}} \left(\dashint_{B_\rho}
|b_i\phi_i+E_{ij}(x_i\phi_j+\phi_i x_j+\tilde\psi_{ij})-a|^2 ~dx\right)^{1/2}
\Bigg].
\end{align*}
On the one hand, by transversality of constant, linear, and quadratic
functions we have
\begin{align*}
&\frac{1}{\rho}\inf_{a\in \mathbb{R}}
\left(\dashint_{B_\rho} |b_i x_i
+E_{ij}x_i x_j-a|^2 ~dx\right)^{1/2}
\geq \frac{1}{C(d)}(|b|+\rho |E|).
\end{align*}
On the other hand, we have by the triangle inequality and Poincar\'e's
inequality
\begin{align*}
&\frac{1}{\rho}\inf_{a\in \mathbb{R}} \left(\dashint_{B_\rho}
|b_i\phi_i+E_{ij}(x_i\phi_j+\phi_i x_j+\tilde\psi_{ij})-a|^2 ~dx\right)^{1/2}
\\&
\leq C(d)
\left[(|b|+\rho|E|)\frac{1}{\rho}\left(\dashint_{B_\rho} |\phi|^2
~dx\right)^{1/2} +\rho |E| \frac{1}{\rho} \max_{\tilde E\in \mathcal{E},|\tilde E|=1}
\left(\dashint_{B_\rho} |\nabla \psi_{\tilde E}|^2 ~dx\right)^{1/2} \right].
\end{align*}
Putting these estimates together, by boundedness of the integrals in the
previous line by $\varepsilon_0^2 \rho^2$ our assertion is established.
\end{proof}

\begin{proof}[Proof of Lemma \ref{ExcessDecayLemma}]
In the proof of the lemma, we may assume that
\begin{align}
\label{AssumptionBEZero}
\widetilde\Exc_2(R)=\dashint_{B_R} |\nabla u|^2 ~dx.
\end{align}
To see this,
recall that the infimum in the definition of $\widetilde \Exc_2(R)$ is actually
attained. Denote the corresponding choices of $b$ and $E$ by $b^{min}$ and
$E^{min}$. Replacing $u$ by $u-b_i^{min}(x_i+\phi_i)-E_{ij}^{min}(x_i x_j+x_i
\phi_j+\phi_i x_j+\tilde\psi_{ij})$, we see that we
may indeed assume \eqref{AssumptionBEZero}:
The new function is also $a$-harmonic due to \eqref{EquationCorrector} and
Proposition \ref{ExistenceCorrectorsPolynomial}.

We then apply Lemma \ref{ApproximationConstantCoefficient} below to our
function $u$.
This yields an $a_{hom}$-harmonic function $u_{hom}$ close to $u$ which in
particular satisfies
\begin{align*}
\dashint_{B_{R/2}} |\nabla u_{hom}|^2 ~dx
\leq C(d,\lambda) \dashint_{B_R} |\nabla u|^2 ~dx.
\end{align*}
By inner regularity theory for elliptic equations with constant coefficients,
the $a_{hom}$-harmonic function $u_{hom}$ satisfies
\begin{align*}
&|\nabla u_{hom}(0)|
+R \sup_{B_{R/4}} |\nabla^2 u_{hom}|
+R^2 \sup_{B_{R/4}}|\nabla^3 u_{hom}|
\\&
\leq C(d,\lambda) \left(\dashint_{B_{R/2}}
|\nabla u_{hom}|^2 ~dx\right)^{1/2}
\leq C(d,\lambda) \left(\dashint_{B_{R}} |\nabla u|^2
~dx\right)^{1/2}.
\end{align*}
Let us define
\begin{align*}
b^{R,Taylor}:=&\nabla u_{hom}(0),
\\
E^{R,Taylor}:=&\nabla^2 u_{hom}(0).
\end{align*}
Since $-\divv a_{hom}\nabla u_{hom}=0$ holds, we infer
$E_{ij}^{R,Taylor}(a_{hom})_{ij}=0$ and therefore $E^{R,Taylor}\in \mathcal{E}$
(note that $E_{ij}^{R,Taylor}=E_{ji}^{R,Taylor}$).
By Taylor's expansion of $\nabla u_{hom}$ around $x=0$ we deduce for any $x\in
B_{R/4}$ the bound
\begin{align*}
\left|\nabla u_{hom}(x)
-b^{R,Taylor}-\frac{1}{2}E_{ij}^{R,Taylor}(x_j e_i+x_i e_j) \right|
\leq |x|^2 \sup_{B_{R/4}}|\nabla^3 u_{hom}|.
\end{align*}
Making use of the identity
\begin{align*}
&(\Id+(\nabla \phi)^t)\nabla u_{hom}
-\nabla \left(b_i^{R,Taylor} (x_i+\phi_i)
+\frac{1}{2}E_{ij}^{R,Taylor} (x_i x_j+x_i \phi_j+\phi_i x_j)\right)
\\&~~~
+\frac{1}{2}E_{ij}^{R,Taylor} (\phi_j e_i + \phi_i e_j)
\\&
=(\Id+(\nabla \phi)^t)
\left(\nabla u_{hom}(x)
-b^{R,Taylor}-\frac{1}{2}E_{ij}^{R,Taylor}(x_j e_i+x_i e_j)\right),
\end{align*}
the previous estimate yields in connection with the bound for $|\nabla^3
u_{hom}|$ and $r<R/4$
\begin{align*}
&\dashint_{B_r} \bigg|
(\Id+(\nabla \phi)^t)\nabla u_{hom}
-\nabla \left(b_i^{R,Taylor} (x_i+\phi_i)
+\frac{1}{2}E_{ij}^{R,Taylor} (x_i x_j+x_i \phi_j+\phi_i x_j)\right)
\\&~~~~~~~
+\frac{1}{2}E_{ij}^{R,Taylor} (\phi_j e_i + \phi_i e_j)\bigg|^2
~dx
\\&
\leq
C(d,\lambda) \left(\frac{r}{R}\right)^4
\dashint_{B_R} |\nabla u|^2 ~dx \times \dashint_{B_r} |\Id+(\nabla\phi)^t|^2
~dx.
\end{align*}
By the Caccioppoli inequality for the $a$-harmonic function $x_i+\phi_i$ (cf.
\eqref{EquationCorrector}), we have
\begin{align}
\label{CaccioppoliPhi}
\dashint_{B_r} |\Id+(\nabla \phi)^t|^2 ~dx
\leq \frac{C(d,\lambda)}{r^2} \dashint_{B_{2r}} |x+\phi|^2 ~dx
\leq C(d,\lambda) (1+\varepsilon_{2r}^2).
\end{align}
The approximation property of $u_{hom}+\phi_i \partial_i u_{hom}$ in $B_{R/2}$
from Lemma \ref{ApproximationConstantCoefficient} below implies
\begin{align*}
\dashint_{B_r} |\nabla u-\nabla (u_{hom}+\phi_i \partial_i u_{hom})|^2 ~dx
\leq
C(d,\lambda)\varepsilon_{R}^{2/(d+1)^2} \left(\frac{r}{R}\right)^{-d}
\dashint_{B_R} |\nabla u|^2 ~dx.
\end{align*}
Combining the last three estimates and the equality
\begin{align*}
&\nabla u
-\nabla \left(b_i^{R,Taylor} (x_i+\phi_i)
+\frac{1}{2}E_{ij}^{R,Taylor} (x_i x_j+x_i \phi_j+\phi_i x_j+\tilde\psi_{ij})\right)
\\
=&
\bigg[
(\Id+(\nabla \phi)^t)\nabla u_{hom}
-\nabla \left(b_i^{R,Taylor} (x_i+\phi_i)
+\frac{1}{2}E_{ij}^{R,Taylor} (x_i x_j+x_i \phi_j+\phi_i x_j)\right)
\\&~~~
+\frac{1}{2}E_{ij}^{R,Taylor}(\phi_j e_i + \phi_i e_j)\bigg]
-\frac{1}{2}E_{ij}^{R,Taylor} (\phi_j e_i+\phi_i e_j+\nabla\tilde\psi_{ij})
\\&
+\Big[\nabla u-\nabla (u_{hom}+\phi_i \partial_i u_{hom})\Big]
+\phi_i \nabla \partial_i u_{hom}
,
\end{align*}
we infer
\begin{align*}
&\dashint_{B_r} \bigg|
\nabla u
-\nabla \left(b_i^{R,Taylor} (x_i+\phi_i)
+\frac{1}{2}E_{ij}^{R,Taylor} (x_i x_j+x_i \phi_j+\phi_i x_j+\tilde\psi_{ij})\right)
\bigg|^2
~dx
\\&
\leq
4\dashint_{B_r} \bigg|
(\Id+(\nabla \phi)^t)\nabla u_{hom}
-\nabla \left(b_i^{R,Taylor} (x_i+\phi_i)
+\frac{1}{2}E_{ij}^{R,Taylor} (x_i x_j+x_i \phi_j+\phi_i x_j)\right)
\\&~~~~~~~~~~~~
+\frac{1}{2}E_{ij}^{R,Taylor}(\phi_j e_i + \phi_i e_j)\bigg|^2
~dx
\\&~~~
+4\dashint_{B_r} \bigg|\frac{1}{2}E_{ij}^{R,Taylor} (\phi_j e_i+\phi_i
e_j+\nabla\tilde\psi_{ij})\bigg|^2 ~dx
\\&~~~
+4\dashint_{B_r} | \nabla u-\nabla (u_{hom}+\phi_i \partial_i
u_{hom})|^2 ~dx
\\&~~~
+4\dashint_{B_r} |\phi_i \nabla \partial_i u_{hom}|^2 ~dx
\\&
\leq C(d,\lambda) 
\left(\frac{r}{R}\right)^4
\left(1+\varepsilon_r^2\right)
\dashint_{B_R} |\nabla u|^2 ~dx
\\&~~~
+C(d)|E^{R,Taylor}|^2
\left(r^2\varepsilon_r^2
+\max_{E\in \mathcal{E},|E|=1}
\dashint_{B_r} |\nabla \tilde\psi_E|^2 ~dx\right)
\\&~~~
+C(d,\lambda)\varepsilon_{R}^{2/(d+1)^2} \left(\frac{r}{R}\right)^{-d}
\dashint_{B_R} |\nabla u|^2 ~dx
\\&~~~
+C(d) r^2\varepsilon_r^2 \sup_{B_{R/4}} |\nabla^2 u_{hom}|^2.
\end{align*}
This finally yields in connection with the above bounds on $\nabla^2 u_{hom}$
in $B_{R/4}$ (recall that $E^{R,Taylor}=\nabla^2 u_{hom}(0)$)
\begin{align*}
&\dashint_{B_r} \bigg|
\nabla u
-\nabla \left(b_i^{R,Taylor} (x_i+\phi_i)
+\frac{1}{2}E_{ij}^{R,Taylor} (x_i x_j+x_i \phi_j+\phi_i x_j
+\tilde\psi_{ij})\right)
\bigg|^2 ~dx
\\
&\leq
C(d,\lambda) 
\left(\frac{r}{R}\right)^4
\left(1+\varepsilon_r^2\right)
\dashint_{B_R} |\nabla u|^2 ~dx
\\&~~~
+C(d,\lambda)R^{-2} \dashint_{B_R} |\nabla u|^2 ~dx
\left(r^2\varepsilon_r^2
+\max_{E\in \mathcal{E},|E|=1}
\dashint_{B_r} |\nabla \tilde\psi_E|^2 ~dx\right)
\\&~~~
+C(d,\lambda)\varepsilon_{R}^{2/(d+1)^2} \left(\frac{r}{R}\right)^{-d}
\dashint_{B_R} |\nabla u|^2 ~dx
\\&~~~
+C(d,\lambda) r^2\varepsilon_r^2 R^{-2} \dashint_{B_R} |\nabla u|^2 ~dx
\\&
\leq
C(d,\lambda)
\left[
\left(\frac{r}{R}\right)^4
+\left(\varepsilon_R^{2/(d+1)^2}
+R^{-2} \max_{E\in \mathcal{E},|E|=1} \dashint_{B_R} |\nabla \tilde\psi_E|^2 ~dx
\right) \left(\frac{r}{R}\right)^{-d}
\right]
\\&~~~~~~~
\times
\dashint_{B_R} |\nabla u|^2 ~dx,
\end{align*}
where in the last step we have used the inequality $\varepsilon_r^2\leq
\left(\frac{R}{r}\right)^d \varepsilon_R^2\leq \left(\frac{R}{r}\right)^d
\varepsilon_R^{2/(d+1)^2}$. The new bound directly implies the desired
estimate.
\end{proof}

\subsection{The $C^{1,1}$ excess-decay estimate}

We now show how our $C^{2,\alpha}$ excess-decay estimate for the second-order
excess $\widetilde\Exc_2$ from Lemma \ref{C2alphaExcessDecayGeneral}
entails a $C^{1,1}$ excess-decay estimate for the
first-order excess $\Exc$.
\begin{lemma}
\label{C11Estimate}
Let $d\geq 2$ and $R>0$. For any $E\in \mathcal{E}$, denote by
$\tilde \psi_E$ a solution to the equation of the second-order corrector
\eqref{EquationPsiE} on the ball $B_R$ (without boundary conditions); assume
that $\tilde \psi_E$ depends linearly on $E$.
There exists a constant $\varepsilon_{min}>0$ depending only on $d$ and
$\lambda$ such that the following assertion holds:

Suppose $r_0\in (0,R]$ is so large that $\varepsilon_{r_0}\leq
\varepsilon_{min}$ and
\begin{align*}
\sup_{r_0\leq \rho\leq R} \rho^{-1}
\left(\max_{E\in \mathcal{E},|E|=1}
\dashint_{B_\rho} |\nabla \tilde \psi_E|^2 ~dx\right)^{1/2}
\leq \varepsilon_{min}
\end{align*}
hold. Let $u$ be an $a$-harmonic function on $B_R$. Then there exists $b^R\in
\mathbb{R}^d$ for which the estimate
\begin{align*}
\dashint_{B_r} |\nabla u-\nabla b_i^R (x_i+\phi_i)|^2 ~dx
\leq C(d,\lambda)
\left(\frac{r}{R}\right)^2
\dashint_{B_R} |\nabla u|^2 ~dx
\end{align*}
holds for any $r\in [r_0,R]$.
Furthermore, $b^R$ depends linearly on $u$ and satisfies
\begin{align*}
|b^R|^2\leq C(d,\lambda) \dashint_{B_R} |\nabla u|^2 ~dx.
\end{align*}
\end{lemma}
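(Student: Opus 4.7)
The plan is to deduce Lemma \ref{C11Estimate} from the $C^{2,\alpha}$ excess decay of Lemma \ref{C2alphaExcessDecayGeneral}, applied with a fixed exponent (say $\alpha=\tfrac12$) so that the resulting smallness threshold $\varepsilon_{min}$ depends only on $d$ and $\lambda$. Denote by $(b^{\rho,\min},E^{\rho,\min})$ the minimizing pair in $\widetilde\Exc_2(\rho)$, well-defined for each $\rho\in[r_0,R]$ by that lemma. The natural choice turns out to be $b^R:=b^{r_0,\min}$: linear dependence of $b^R$ on $u$ and the bound $|b^R|^2\le C(d,\lambda)\dashint_{B_R}|\nabla u|^2~dx$ follow at once from \eqref{EstimateEbSize} and the fact that the minimizer of a quadratic functional depends linearly on the data.

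The crux is a \emph{scale-weighted} refinement of \eqref{EstimateEbDiff}. Whereas \eqref{EstimateEbDiff} only provides the uniform bound $|b^{r,\min}-b^{R,\min}|^2\le C\widetilde\Exc_2(R)$, I would establish
\[
|b^{r,\min}-b^{r_0,\min}|^2\le C\left(\frac{r}{R}\right)^{2+2\alpha}\widetilde\Exc_2(R)
\]
by iterating on dyadic scales. For consecutive scales $\rho,2\rho\in[r_0,R]$, the triangle inequality applied to the two minimization problems followed by Lemma \ref{QuantifiedLinearIndependenceLemma}---whose smallness hypotheses at scale $\rho$ are inherited from those at $r_0$ by monotonicity of $\rho\mapsto\varepsilon_\rho$ and of $\rho\mapsto\varepsilon_{\tilde\psi,\rho,R}$---combined with the $C^{2,\alpha}$ decay of $\widetilde\Exc_2$ yields
\[
|b^{\rho,\min}-b^{2\rho,\min}|^2+\rho^2|E^{\rho,\min}-E^{2\rho,\min}|^2\le C\left(\frac{\rho}{R}\right)^{2+2\alpha}\widetilde\Exc_2(R).
\]
Summing geometrically over the dyadic scales between $r_0$ and $r$ (the sum is dominated by its largest contribution, from the scale $\sim r$) produces the desired weighted estimate.

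With this refinement in hand, the lemma follows from a triangle-inequality decomposition:
\begin{align*}
\dashint_{B_r}|\nabla u-\nabla b^R_i(x_i+\phi_i)|^2~dx\le{}&3\widetilde\Exc_2(r)+3|b^{r,\min}-b^R|^2\dashint_{B_r}|\Id+(\nabla\phi)^t|^2~dx
\\
&+3\dashint_{B_r}|\nabla E^{r,\min}_{ij}(x_ix_j+x_i\phi_j+\phi_ix_j+\tilde\psi_{ij})|^2~dx.
\end{align*}
The first term is $\le C(r/R)^{2+2\alpha}\dashint_{B_R}|\nabla u|^2~dx$ by the $C^{2,\alpha}$ decay together with $\widetilde\Exc_2(R)\le\dashint_{B_R}|\nabla u|^2~dx$; the second is $\le C(r/R)^2\dashint_{B_R}|\nabla u|^2~dx$ by the weighted refinement above and the Caccioppoli-type estimate \eqref{CaccioppoliPhi} for $x_i+\phi_i$; the third is $\le Cr^2|E^{r,\min}|^2\le C(r/R)^2\dashint_{B_R}|\nabla u|^2~dx$ using the smallness of $\phi$ and $\tilde\psi_E$ on $B_r$ to absorb corrector contributions, then \eqref{EstimateEbSize} to convert $R^2|E^{r,\min}|^2$. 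Since $\alpha>0$, $(r/R)^{2+2\alpha}\le(r/R)^2$ and the three contributions combine to the claimed $C^{1,1}$ bound.

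The main obstacle is the scale-weighted refinement of \eqref{EstimateEbDiff}: the uniform estimate of Lemma \ref{C2alphaExcessDecayGeneral} is by itself too crude to produce $C^{1,1}$ decay of the linear coefficient, and one must revisit the computation of that difference at every dyadic scale in order to harvest the extra factor $(\rho/R)^{2+2\alpha}$ afforded by the $C^{2,\alpha}$ decay, before summing the refined increments geometrically.
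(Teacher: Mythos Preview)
Your proposal is correct and follows the paper's strategy closely: fix $\alpha=\tfrac12$, set $b^R:=b^{r_0,\min}$, use \eqref{EstimateEbSize} for the size bound, and decompose $\dashint_{B_r}|\nabla u-\nabla b^R_i(x_i+\phi_i)|^2$ into the same three terms. The one place you diverge is in obtaining the ``scale-weighted refinement'' $|b^{r,\min}-b^{r_0,\min}|^2\le C(r/R)^{2+2\alpha}\widetilde\Exc_2(R)$. You re-run a dyadic iteration between $r_0$ and $r$, invoking Lemma~\ref{QuantifiedLinearIndependenceLemma} at each scale and summing geometrically; this works, but it essentially reproves \eqref{EstimateEbDiff} from scratch. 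The paper instead observes that Lemma~\ref{C2alphaExcessDecayGeneral} applies just as well on the interval $[r_0,r]$ (with $r$ playing the role of $R$), so that \eqref{EstimateEbDiff} directly gives
\[
r^2|E^{r_0,\min}-E^{r,\min}|^2+|b^{r_0,\min}-b^{r,\min}|^2\le C(d,\lambda)\,\widetilde\Exc_2(r),
\]
and then \eqref{SecondOrderExcessDecay} converts the right-hand side to $C(r/R)^{2+2\alpha}\widetilde\Exc_2(R)$ in one step. Your iteration is thus redundant: the work is already packaged inside \eqref{EstimateEbDiff}, and you only need to invoke it at the right pair of radii.
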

\begin{proof}
In Lemma \ref{C2alphaExcessDecayGeneral}, fix $\alpha:=1/2$. We then easily
verify that Lemma \ref{C2alphaExcessDecayGeneral} is applicable in our
situation.
Set $b^R:=b^{r_0,min}$ and $E^R:=E^{r_0,min}$; this implies that $b^R$ depends
linearly on $u$.
The estimate \eqref{EstimateEbSize} takes the form
\begin{align*}
R^2|E^R|^2+|b^R|^2 \leq C(d,\lambda) \dashint_{B_R} |\nabla u|^2 ~dx.
\end{align*}
Furthermore, applying Lemma \ref{C2alphaExcessDecayGeneral} with $r_0$ playing
the role of $r$ and $r$ playing the role of $R$, we deduce from
\eqref{EstimateEbDiff}
\begin{align*}
r^2|E^R-E^{r,min}|^2+|b^R-b^{r,min}|^2 &\leq C(d,\lambda) \widetilde\Exc_2(r)
\\
\stackrel{\eqref{SecondOrderExcessDecay}}{\leq}
C(d,\lambda) \left(\frac{r}{R}\right)^{2+2\alpha}\widetilde\Exc_2(R)
&\leq C(d,\lambda) \left(\frac{r}{R}\right)^{2+2\alpha}
\dashint_{B_R} |\nabla u|^2 ~dx.
\end{align*}
We now estimate
\begin{align*}
&\dashint_{B_r} |\nabla u-\nabla b_i^R (x_i+\phi_i)|^2 ~dx
\\&
\leq
3\dashint_{B_r} \left|\nabla u-\nabla b_i^{r,min} (x_i+\phi_i)
-\nabla E_{ij}^{r,min}(x_i x_j + x_i \phi_j + \phi_i x_j +
\tilde\psi_{ij})\right|^2 ~dx
\\&~~~
+3\dashint_{B_r} \left|\nabla E_{ij}^{r,min}(x_i x_j + x_i \phi_j
+\phi_i x_j + \tilde\psi_{ij})\right|^2 ~dx
\\&~~~
+3\dashint_{B_r} |(b_i^{r,min}-b_i^R) \nabla (x_i+\phi_i)|^2 ~dx
\\&
\leq
3 \widetilde\Exc_2(r)
\\&~~~
+C(d) |E^{r,min}|^2 \left(\dashint_{B_r} |\phi|^2+ r^2 |\Id+(\nabla
\phi)^t|^2 ~dx
+\max_{E\in \mathcal{E}, |E|=1} \dashint_{B_r} |\nabla \tilde \psi_E|^2 ~dx
\right)
\\&~~~
+3 |b^{r,min}-b^R|^2 \dashint_{B_r} |\Id+(\nabla\phi)^t|^2 ~dx
\\&
\stackrel{(\ref{SecondOrderExcessDecay},\ref{CaccioppoliPhi})}{\leq}
C(d,\lambda) \left(\frac{r}{R}\right)^{2+2\alpha} \widetilde\Exc_2(R)
+C(d,\lambda) |E^{r,min}|^2 r^2 (\varepsilon_r^2 + (1+\varepsilon_{2r}^2) +
\varepsilon_{\tilde\psi,r_0,R}^2)
\\&~~~
+C(d,\lambda) |b^{r,min}-b^R|^2 (1+\varepsilon_{2r}^2)
\\&
\leq
C(d,\lambda) \left(\frac{r}{R}\right)^{2+2\alpha} \widetilde\Exc_2(R)
+C(d,\lambda) |E^{r,min}|^2 r^2
+C(d,\lambda) |b^{r,min}-b^R|^2.
\end{align*}
In conjunction with the two previous estimates, we infer
\begin{align*}
&\dashint_{B_r} |\nabla u-\nabla b_i^R (x_i+\phi_i)|^2 ~dx
\\&
\leq
C(d,\lambda) \left[\left(\frac{r}{R}\right)^{2+2\alpha}
+\left(\left(\frac{r}{R}\right)^2
+\left(\frac{r}{R}\right)^{2+2\alpha}\right)
+\left(\frac{r}{R}\right)^{2+2\alpha}
\right]
\dashint_{B_R} |\nabla u|^2~dx.
\end{align*}
Our lemma is therefore established.
\end{proof}

\subsection{Construction of second-order correctors}

Using the $C^{1,1}$ theory established in the previous subsection, we now
proceed to the construction of our second-order corrector. The following lemma
provides the inductive step; starting from a function which acts as a corrector
on a ball $B_R$, we construct a function acting as a corrector on the ball
$B_{2R}$.
\begin{lemma}
\label{IterativeConstructionSecondCorrector}
Let $d\geq 2$ and let $r_0>0$ satisfy the estimate $\varepsilon_{2,r_0}\leq
\varepsilon_0$, where $\varepsilon_0=\varepsilon_0(d,\lambda)$ is to be chosen
in the proof below.
Then the following implication holds:

Let $R=2^M r_0$ for some $M\in \mathbb{N}_0$.
Suppose that for every $E\in \mathbb{R}^{d\times d}$ we have a solution
$\psi^R_E$ to the equation
\begin{align*}
-\divv a\nabla \psi_E^R = E_{ij} \divv
\chi_{B_R}[\sigma_{ij}+\sigma_{ji}
+a(\phi_i e_j+\phi_j e_i)]
\end{align*}
subject to the growth condition
\begin{align*}
r^{-1}
\left(
\dashint_{B_r} |\nabla \psi_E^R|^2 ~dx
\right)^{1/2}
\leq C_1(d,\lambda) |E|
\sum_{m=0}^{M}
\min\{1,2^m r_0/r\}\varepsilon_{2^m r_0}
\end{align*}
for all $r\geq r_0$,
where $C_1(d,\lambda)$ is a sufficiently large constant to be chosen in the
proof below. Assume furthermore that $\psi^R_E$ depends linearly on $E$.

Then for every $E\in \mathbb{R}^{d\times d}$ there exists a solution
$\psi^{2R}_E$ to the equation
\begin{align*}
-\divv a\nabla \psi_E^{2R} = E_{ij} \divv
[\chi_{B_{2R}}(\sigma_{ij}+\sigma_{ji}+a (\phi_i e_j+\phi_j e_i))]
\end{align*}
subject to the growth condition
\begin{align*}
r^{-1}
\left(
\dashint_{B_r} |\nabla \psi_E^{2R}|^2 ~dx
\right)^{1/2}
\leq C_1(d,\lambda) |E|
\sum_{m=0}^{M+1}
\min\{1,2^m r_0/r\} \varepsilon_{2^m r_0}
\end{align*}
for all $r\geq r_0$.
Furthermore, $\psi_E^{2R}$ depends linearly on $E$ and we have
\begin{align*}
&r^{-1}
\left(\dashint_{B_r} |\nabla \psi_E^{2R}-\nabla \psi_E^R|^2
~dx\right)^{1/2}
\leq C_1(d,\lambda) |E|
\varepsilon_{2^{M+1} r_0}.
\end{align*}
\end{lemma}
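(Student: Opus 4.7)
I would construct $\psi_E^{2R}$ by adding to $\psi_E^R$ an increment $w_E$ that accounts for the extra annular forcing, plus a first-order $a$-harmonic correction to control the small-scale behaviour. Specifically, set
$$f_E := E_{ij}(\chi_{B_{2R}} - \chi_{B_R})\bigl[\sigma_{ij} + \sigma_{ji} + a(\phi_i e_j + \phi_j e_i)\bigr]$$
and look for $w_E$ solving $-\divv(a\nabla w_E) = \divv f_E$ on $\mathbb{R}^d$. Since $f_E$ is compactly supported in the annulus $B_{2R}\setminus B_R$, I would produce a canonical $w_E$ (linear in $E$) by Lax--Milgram in $\dot H^1(\mathbb{R}^d)$ for $d\geq 3$, and in $d=2$ via exhaustion on $B_{\rho_n}\supset B_{2R}$ with zero boundary data, extracting a weakly convergent subsequence. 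Testing with $w_E$ itself and using $\int_{B_{2R}}(|\phi|^2 + |\sigma|^2) \lesssim R^{d+2}\varepsilon_{2R}^2$ (from the definition of $\varepsilon_r$ in \eqref{DefinitionEpsilon}) yields the global energy estimate $\int_{\mathbb{R}^d}|\nabla w_E|^2 \leq C(d,\lambda)|E|^2 R^{d+2}\varepsilon_{2R}^2$.

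Since $f_E$ vanishes inside $B_R$, the increment $w_E$ is $a$-harmonic on $B_R$, so I would apply the $C^{1,1}$ excess-decay estimate (Lemma~\ref{C11Estimate}) to $w_E$ on $B_R$, with $\tilde\psi_E := \psi_E^R$ playing the role of the auxiliary second-order corrector (note that on $B_R$ the cutoff $\chi_{B_R}$ is trivial, so $\psi_E^R$ indeed solves the unmollified equation \eqref{EquationPsiE} there). The smallness hypothesis of Lemma~\ref{C11Estimate} is met by summing the inductive growth bound for $\psi_E^R$, which is controlled by a constant times $\varepsilon_{2,r_0}\leq \varepsilon_0$ uniformly in $M$; choosing $\varepsilon_0$ small enough makes the lemma applicable. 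It furnishes a vector $b_E^R\in\mathbb{R}^d$ (linear in $E$) with $|b_E^R|^2 \lesssim \dashint_{B_R}|\nabla w_E|^2 \lesssim |E|^2 R^2\varepsilon_{2R}^2$ and
$$\dashint_{B_r}\bigl|\nabla w_E - (b_E^R)_i \nabla(x_i + \phi_i)\bigr|^2 \lesssim \Bigl(\frac{r}{R}\Bigr)^{\!2} \dashint_{B_R}|\nabla w_E|^2 \lesssim |E|^2 r^2 \varepsilon_{2R}^2$$
for all $r_0 \leq r \leq R$. I then set $\psi_E^{2R} := \psi_E^R + w_E - (b_E^R)_i(x_i + \phi_i)$, which is linear in $E$ and, since $x_i + \phi_i$ is $a$-harmonic, solves the correct PDE on $B_{2R}$.

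It remains to verify the growth bounds. On small scales $r \leq R$, the $C^{1,1}$ estimate gives $r^{-1}(\dashint_{B_r}|\nabla(\psi_E^{2R}-\psi_E^R)|^2)^{1/2} \lesssim |E|\varepsilon_{2R}$ directly. On large scales $r > R$, the triangle inequality combined with the global energy bound on $w_E$, the bound $|b_E^R| \lesssim |E|R\varepsilon_{2R}$, and the Caccioppoli-type bound $\dashint_{B_r}|e_i + \nabla\phi_i|^2 \lesssim 1 + \varepsilon_{2r}^2$ of \eqref{CaccioppoliPhi} yields $r^{-1}(\dashint_{B_r})^{1/2} \lesssim |E|(R/r)\varepsilon_{2R} \leq |E|\varepsilon_{2R}$. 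This establishes the difference bound asserted in the lemma (recall $2R = 2^{M+1}r_0$). Rewriting the same inequality in the sharper form $|E|\min\{1,2^{M+1}r_0/r\}\varepsilon_{2^{M+1}r_0}$ and adding to the inductive growth bound for $\psi_E^R$ via the triangle inequality gives the desired $m\leq M+1$ growth estimate for $\psi_E^{2R}$, provided $C_1(d,\lambda)$ is fixed large enough to absorb the implicit constants appearing in both pieces.

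The main obstacle is the uniform constant bookkeeping across iteration levels. The constant $C_1(d,\lambda)$ is fixed once at the outset large enough that it absorbs the constants appearing in the global energy bound for $w_E$, in the $C^{1,1}$ step, and in the triangle-inequality combinations; crucially, this choice does \emph{not} grow with $M$ because each iteration contributes only one additional term $\lesssim |E|\varepsilon_{2R}$ to the dyadic sum. In turn, $\varepsilon_0(d,\lambda)$ must be small enough that the smallness hypothesis of Lemma~\ref{C11Estimate} holds at every scale in the induction; this is precisely where the summability assumption $\varepsilon_{2,r_0}<\infty$, rather than mere sublinearity of $(\phi,\sigma)$, is decisive, since it bounds $\sum_{m=0}^M \min\{1,2^m r_0/\rho\}\varepsilon_{2^m r_0}$ independently of $M$. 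A minor technical subtlety is the construction of $w_E$ in $d=2$, where $\dot H^1(\mathbb{R}^2)$ is not a Hilbert space; the exhaustion procedure above handles this, with uniqueness of the limit ensured by the first-order Liouville theorem for $a$-harmonic functions with sublinear growth.
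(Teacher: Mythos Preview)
Your proposal is correct and follows essentially the same approach as the paper: you define the annular increment $w_E$ (the paper's $\xi_E^R$), obtain the global energy bound, apply Lemma~\ref{C11Estimate} to the $a$-harmonic function $w_E$ on $B_R$ using $\psi_E^R$ as the auxiliary corrector, subtract the resulting first-order correction $(b_E^R)_i(x_i+\phi_i)$, and combine the small-scale $C^{1,1}$ decay with the large-scale energy bound exactly as the paper does. Your additional remarks on the $d=2$ construction and on why $C_1$ stays uniform in $M$ are helpful clarifications but do not depart from the paper's argument.
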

\begin{proof}
To establish the lemma, we first note that the assumptions of the lemma ensure
that the $C^{1,1}$ excess-decay lemma (Lemma \ref{C11Estimate}) is applicable on
$B_R$ with $\tilde \psi_E:=\psi_E^{R}$. To see this, we estimate for any
$r\in [r_0,R]$
\begin{align*}
r^{-1}\left(
\dashint_{B_r} |\nabla \psi_E^R|^2 ~dx
\right)^{1/2}
\leq C_1(d,\lambda) |E|
\varepsilon_{2,r_0}
\leq C_1(d,\lambda) |E| \varepsilon_0.
\end{align*}
By choosing $\varepsilon_0>0$ small enough depending only on $d$ and $\lambda$
and $C_1$ (which is to be chosen at the end of this proof), we can ensure
that the assumption of Lemma~\ref{C11Estimate} regarding smallness of
$\varepsilon_{\tilde \psi,r_0,R}$ is satisfied.

Let now $\xi_E^R$ be the weak solution on $\mathbb{R}^d$ with square integrable
gradient, which is unique up to additive constants and whose existence follows
from the Lax-Milgram theorem, to the problem
\begin{align*}
-\divv a\nabla \xi_E^R = E_{ij} \divv
\chi_{B_{2R}-B_R}(\sigma_{ij}+\sigma_{ji})
+E_{ij}\divv \chi_{B_{2R}-B_R} a (\phi_i e_j+\phi_j e_i).
\end{align*}
Obviously, $\nabla \xi_E^R$ depends linearly on $E$; after fixing the additive
constant e.g. by requiring $\int_{B_1} \xi_E^R ~dx=0$, $\xi_E^R$ itself depends
linearly on $E$.
Furthermore, we have the bound
\begin{align*}
\int_{\mathbb{R}^d} |\nabla \xi_E^R|^2 ~dx
\leq C(\lambda) |E|^2 \int_{\mathbb{R}^d}
\chi_{B_{2R}-B_R}|\sigma|^2
+\chi_{B_{2R}-B_R} |\phi|^2 ~dx
\end{align*}
and therefore
\begin{align}
\label{EstimateXiRE}
\int_{\mathbb{R}^d} |\nabla \xi_E^R|^2 ~dx
\leq C(\lambda) |E|^2 R^{2+d} \varepsilon_{2R}^2.
\end{align}
As $\xi_E^R$ is $a$-harmonic in $B_R$, Lemma \ref{C11Estimate} now implies the
existence of some $b^R_E\in \mathbb{R}^d$ for which the estimates
\begin{align}
\label{EstimatebRE}
|b^R_E|^2\leq C(d,\lambda) \dashint_{B_R} |\nabla \xi_E^R|^2 ~dx
\leq C(d,\lambda) |E|^2 R^2 \varepsilon_{2R}^2
\end{align}
and
\begin{align*}
\dashint_{B_r} |\nabla \xi_E^R-\nabla (b_E^R)_i (x_i+\phi_i)|^2 ~dx
&\leq C(d,\lambda)
\left(\frac{r}{R}\right)^2
\dashint_{B_R} |\nabla \xi_E^R|^2 ~dx
\\&
\leq C(d,\lambda) |E|^2 r^2 \varepsilon_{2R}^2
\end{align*}
hold for all $r\in [r_0,R]$ and which linearly depends on $E$.

Furthermore, we have for $r>R$
\begin{align*}
&\dashint_{B_r} |\nabla \xi_E^R-\nabla (b_E^R)_i (x_i+\phi_i)|^2 ~dx
\\&
\stackrel{\eqref{CaccioppoliPhi}}{\leq}
C(d,\lambda) \left(r^{-d}\int_{B_r} |\nabla \xi_E^R |^2 ~dx
+|b_E^R|^2(1+\varepsilon_{2r}^2)\right)
\\&
\stackrel{(\ref{EstimateXiRE},\ref{EstimatebRE})}{\leq}
C(d,\lambda) |E|^2 R^2
\left(\left(\frac{R}{r}\right)^d+1+\varepsilon_{2r}^2
\right)\varepsilon_{2R}^2
\\&
\leq C(d,\lambda) |E|^2 R^2 \varepsilon_{2R}^2.
\end{align*}
The combination of both $r$-ranges yields
\begin{align}
\label{LastContributionC11}
\frac{1}{r}
\left(\dashint_{B_r} |\nabla \xi_E^R-\nabla (b_E^R)_i (x_i+\phi_i)|^2 ~dx
\right)^{1/2} \leq C(d,\lambda)  |E| \min\{1,2R/r\} \varepsilon_{2R}.
\end{align}
In total, we see that
\begin{align*}
\psi_E^{2R}:=\psi_E^R+\xi_E^R-(b_E^R)_i (x_i+\phi_i)
\end{align*}
is the desired function (note in particular that the last term is $a$-harmonic),
provided we choose $C_1$ to be the constant appearing in
\eqref{LastContributionC11}.
\end{proof}

We now establish existence of second-order correctors by means of the previous
lemma.
\begin{proof}[Proof of Theorem \ref{ExistenceSecondOrderCorrector}]
We just need to construct an ``initial'' second-order corrector
$\psi_E^{r_0}$ subject to the properties of Lemma
\ref{IterativeConstructionSecondCorrector}; then Lemma
\ref{IterativeConstructionSecondCorrector} yields a sequence
$(\psi_E^{2^m r_0})_m$ which is a Cauchy sequence in $H^1(B_R)$ for every $R>0$
due to the last estimate in the lemma and our assumption \eqref{Epsilon2Small}
which implies summability of $\varepsilon_{2^m r_0}$.
Thus, the limit $\psi_E$ satisfies the equation \eqref{EquationPsiE} in the
whole space, depends linearly on $E$, and satisfies the estimate
\begin{align*}
r^{-1}
\left(
\dashint_{B_r} |\nabla \psi_E|^2 ~dx
\right)^{1/2}
\leq C_1(d,\lambda) |E|
\sum_{m=0}^{\infty}
\min\{1,2^m r_0/r\} \varepsilon_{2^m r_0}
\end{align*}
for any $r\geq r_0$.

To construct $\psi_E^{r_0}$, just use Lax-Milgram to find the solution
$\psi_E^{r_0}$ on $\mathbb{R}^d$ with square-integrable gradient (unique up to
an additive constant) to the equation
\begin{align*}
-\divv a\nabla \psi_E^{r_0} = E_{ij} \divv[
\chi_{B_{r_0}}(\sigma_{ij}+\sigma_{ji}
+a (\phi_i e_j+\phi_j e_i))].
\end{align*}
Obviously, after fixing the additive constant appropriately $\psi_E^{r_0}$
depends linearly on $E$.
Furthermore, we have the energy estimate
\begin{align*}
\int_{\mathbb{R}^d} |\nabla \psi_E^{r_0}|^2 ~dx
\leq C(\lambda) |E|^2 \int_{\mathbb{R}^d}
|\chi_{B_{r_0}}\sigma|^2
+|\chi_{B_{r_0}} a \phi|^2 ~dx,
\end{align*}
i.e. for any $r\geq r_0$
\begin{align*}
\int_{B_r} |\nabla \psi_E^{r_0}|^2 ~dx
\leq C(d,\lambda) |E|^2 \int_{B_{r_0}} |\phi|^2+|\sigma|^2 ~dx
\end{align*}
and therefore
\begin{align*}
\dashint_{B_r} |\nabla \psi_E^{r_0}|^2 ~dx
&\leq C(d,\lambda) |E|^2 r^{-d} \varepsilon_{r_0}^2 r_0^{2+d}
\\&
\leq C(d,\lambda) |E|^2 r^2 \min\{1,(r_0/r)^2\}
\varepsilon_{r_0}^2.
\end{align*}
We note that this provides the starting point for Lemma
\ref{IterativeConstructionSecondCorrector}, possibly after enlarging the
constant $C_1$ in the statement thereof.
\end{proof}

\subsection{Proof of the $C^{2,\alpha}$ Liouville principle}

\label{SubsectionLiouville}

The $C^{2,\alpha}$ Liouville principle (Corollary \ref{LiouvillePrinciple}) is
an easy consequence of our large-scale excess-decay estimate
(Theorem~\ref{C2alphaExcessDecay}).
\begin{proof}[Proof of Corollary \ref{LiouvillePrinciple}]
Let $\alpha\in (0,1)$ be such that
\begin{align*}
\lim_{R\rightarrow \infty} \frac{1}{R^{2+\alpha}}
\left(\dashint_{B_R} \left|u\right|^2 ~dx\right)^{1/2}=0
\end{align*}
holds. By the Caccioppoli estimate, we deduce
\begin{align*}
\lim_{R\rightarrow \infty} \frac{1}{R^{1+\alpha}}
\left(\dashint_{B_R} \left|\nabla u\right|^2 ~dx\right)^{1/2}=0.
\end{align*}
Fix $r\geq r_0$. The excess-decay estimate from Theorem \ref{C2alphaExcessDecay}
yields together with the trivial bound $\Exc_2(R)\leq \dashint_{B_R} |\nabla
u|^2 ~dx$ that
\begin{align*}
\Exc_2(r)&\leq C(d,\lambda,\alpha)\left(\frac{r}{R}\right)^{2+2\alpha}
\Exc_2(R)
\\&
\leq C(d,\lambda,\alpha) r^{2+2\alpha} \left(\frac{1}{R^{1+\alpha}}
\left(\dashint_{B_R} |\nabla u|^2 ~dx\right)^{1/2}\right)^2.
\end{align*}
Passing to the limit $R\rightarrow \infty$, we deduce that
\begin{align*}
\Exc_2(r)=0
\end{align*}
holds for every $r\geq r_0$. Therefore, on every $B_r$ with $r\geq r_0$, $\nabla
u$ can be represented \emph{exactly} as the derivative of a corrected polynomial
of second order (since the infimum in the definition of $\Exc_2$ is actually
attained, as noted at the beginning of the proof of Lemma
\ref{ExcessDecayLemma}), i.e.
we have
\begin{align*}
\nabla u=\nabla b_i^r (x_i+\phi_i) + \nabla E_{ij}^r (x_i x_j + x_i \phi_j +
\phi_i x_j + \psi_{ij})
\end{align*}
in $B_r$ for some $b^r\in \mathbb{R}^d$ and some $E^r\in \mathcal{E}$.
It is not difficult to show that for $r$ large enough, the $b^r$ and $E^r$ are
actually independent of $r$ and define some common $b\in \mathbb{R}^d$ and
$E\in \mathcal{E}$: For example, one may use Lemma
\ref{C2alphaExcessDecayGeneral} to compare the $b^r$, $E^r$ for two different
radii $r_1,r_2\geq r_0$; the estimate for $|b^{r_1}-b^{r_2}|$ and
$|E^{r_1}-E^{r_2}|$ then contains the factor $\Exc_2(\max(r_1,r_2))$ and is
therefore zero.
Moreover, the gradient $\nabla u$ determines the function $u$ itself up to a
constant, i.e. we have
\begin{align*}
u=a + b_i (x_i+\phi_i) + E_{ij} (x_i x_j + x_i \phi_j + \phi_i x_j + \psi_{ij})
\end{align*}
for some $a\in \mathbb{R}$, some $b\in \mathbb{R}^d$, and some $E\in
\mathcal{E}\subset \mathbb{R}^{d\times d}$.
\end{proof}

\section{A $C^{k,\alpha}$ Large-Scale Regularity Theory for Elliptic Equations
with Random Coefficients}

We now generalize our proofs from the $C^{2,\alpha}$ case
in order to correct polynomials of order $k$ and
obtain our $C^{k,\alpha}$ large-scale regularity theory. We proceed by
induction in $k$.

In order to establish our $C^{k,\alpha}$ regularity theory, let  us first show
Proposition \ref{ExistenceCorrectorsPolynomialK}, which -- like the proof of
Proposition \ref{ExistenceCorrectorsPolynomial} in the $C^{2,\alpha}$ case --
only requires a simple computation.
\begin{proof}[Proof of Proposition \ref{ExistenceCorrectorsPolynomialK}]
Making use of the fact that we have $(a_{hom})_{ij} \partial_i \partial_j
P=0$ (in the third step below), we obtain
\begin{align*}
&-\nabla \cdot (\sigma_i \nabla \partial_i P)
\\&
=(\nabla \cdot \sigma_i) \cdot \nabla \partial_i P
\\&
\stackrel{\eqref{EquationSigma}}{=}
q_i \cdot \nabla \partial_i P
\\&
\stackrel{\eqref{Equationq}}{=}
a(e_i+\nabla \phi_i) \cdot \nabla \partial_i P
\\&
\stackrel{\eqref{EquationCorrector}}{=}
\nabla \cdot (\partial_i P\, a(e_i+\nabla \phi_i)).
\end{align*}
This yields
\begin{align*}
&\nabla \cdot ((\phi_i a-\sigma_i)\nabla \partial_i P)
\\&
=\nabla \cdot a(\phi_i \nabla \partial_i P+\partial_i P e_i+\partial_i P
\nabla \phi_i)
\\&
=\nabla \cdot a\nabla (P+\phi_i \partial_i P),
\end{align*}
which together with \eqref{EquationPsiP} implies our proposition.
\end{proof}

\subsection{The $C^{k,\alpha}$ excess-decay estimate}

To establish our $C^{k,\alpha}$  excess-decay estimate, we make use of the
following lemma, which essentially generalizes Theorem~\ref{CkalphaExcessDecay}
to correctors that are only available on balls $B_R$.
\begin{lemma}
\label{CkalphaExcessDecayGeneral}
Let $d\geq 2$ and $k\geq 2$. Suppose that Theorem
\ref{ExistenceHigherOrderCorrector} holds for orders $2$, \ldots, $k-1$, and
set $\psi_P\equiv 0$ for first-order polynomials $P$ to simplify notation.
For any $P\in \Pa{k}$, denote by $\tilde \psi_P$ a solution to the equation
\eqref{EquationPsiP} on the ball $B_R$ (without
boundary conditions); assume that the $\tilde \psi_P$ depend linearly on $P$.
Set
\begin{align}
\label{DefinitionEpsPsiTildePk}
\varepsilon_{\tilde\psi,r,R}:=\sup_{r\leq \rho\leq R} \rho^{-(k-1)}
\left(\max_{P\in \Pa{k}, ||P||=1} 
\dashint_{B_\rho} |\nabla \tilde \psi_{P}|^2 ~dx\right)^{1/2}.
\end{align}
For an $a$-harmonic function $u$ in $B_R$, consider the $k$th-order excess
\begin{align}
\label{kthOrderExcess}
&\widetilde\Exc_k(r):=
\\ \nonumber
&~~\inf_{P_\kappa\in \Pa{\kappa}}
\dashint_{B_r} \bigg|\nabla u-\nabla \big(\sum_{\kappa=1}^{k-1}
(P_\kappa+\phi_i\partial_i P_\kappa+\psi_{P_\kappa})
+(P_k+\phi_i\partial_i P_k+\tilde\psi_{P_k})
\big)\bigg|^2 ~dx.
\end{align}

For any $0<\alpha<1$ there exists a constant $\varepsilon_{min}>0$
depending only on $d$, $k$, $\lambda$, and $\alpha$ such that the following
assertion holds:

Suppose that $r_0>0$ satisfies
$\varepsilon_{2,r_0}+\varepsilon_{\tilde\psi,r_0,R}\leq \varepsilon_{min}$.
Then for all $r\in [r_0,R]$ the $C^{k,\alpha}$ excess-decay estimate
\begin{align}
\label{kthOrderExcessDecay}
\widetilde\Exc_k(r)\leq
C(d,k,\lambda,\alpha)\left(\frac{r}{R}\right)^{2(k-1)+2\alpha}
\widetilde\Exc_k(R)
\end{align}
is satisfied.

Note that the infimum in \eqref{kthOrderExcess} is actually attained, as the
average integral in the definition of $\widetilde\Exc_{2}(\rho)$ is a quadratic
functional of
$P_\kappa$. Denote by
$P_\kappa^{\rho,min}$ a corresponding optimal choice of
$P_\kappa$ in \eqref{kthOrderExcess}. We then have the estimates
\begin{align}
\label{EstimatePbDiff}
\sum_{\kappa=1}^k R^{2(\kappa-1)} ||P_\kappa^{r,min}-P_\kappa^{R,min}||^2
\leq C(d,k,\lambda,\alpha) \widetilde\Exc_k(R)
\end{align}
and
\begin{align}
\label{EstimatePbSize}
\sum_{\kappa=1}^k R^{2(\kappa-1)} ||P_\kappa^{r,min}||^2
\leq C(d,k,\lambda,\alpha) \dashint_{B_R} |\nabla u|^2 ~dx.
\end{align}
\end{lemma}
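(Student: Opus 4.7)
The plan is to mirror exactly the strategy used for the $C^{2,\alpha}$ case (Lemma \ref{C2alphaExcessDecayGeneral}), proceeding by induction on $k$: assume that Theorem \ref{ExistenceHigherOrderCorrector} and Lemma \ref{CkalphaExcessDecayGeneral} both hold for orders $2,\ldots,k-1$, so that genuine global correctors $\psi_{P_\kappa}$ are available for $\kappa\leq k-1$, and only the top-order corrector $\tilde\psi_{P_k}$ is local to $B_R$. The proof will have two pieces: a one-step excess-decay lemma, analogous to Lemma \ref{ExcessDecayLemma}, and a Campanato-type iteration of that lemma with an additional bookkeeping for the optimal polynomials $P_\kappa^{\rho,\mathrm{min}}$.

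For the one-step lemma, I would first normalize, as in the $C^{2,\alpha}$ case, by subtracting the minimizer at scale $R$, so that $\widetilde\Exc_k(R)=\dashint_{B_R}|\nabla u|^2\,dx$; this is legitimate because Proposition \ref{ExistenceCorrectorsPolynomialK} ensures that each corrected $a_{hom}$-harmonic polynomial $P_\kappa+\phi_i\partial_iP_\kappa+\psi_{P_\kappa}$ is $a$-harmonic. I then apply the constant-coefficient approximation (the analogue of Lemma \ref{ApproximationConstantCoefficient} used in the $C^{2,\alpha}$ proof) to obtain an $a_{hom}$-harmonic $u_{hom}$ on $B_{R/2}$ close to $u$ in the two-scale sense, and Taylor-expand $u_{hom}$ at the origin to order $k$:
\begin{equation*}
u_{hom}(x)=\sum_{\kappa=0}^{k} P_\kappa^{R,\mathrm{Taylor}}(x)+O(|x|^{k+1}\sup_{B_{R/4}}|\nabla^{k+1}u_{hom}|),
\end{equation*}
with each $P_\kappa^{R,\mathrm{Taylor}}$ automatically in $\Pa{\kappa}$ since $u_{hom}$ is $a_{hom}$-harmonic. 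Interior estimates on $\nabla^{k+1}u_{hom}$ convert this into an $L^2$-Taylor error of order $(r/R)^{2k}\dashint_{B_R}|\nabla u|^2$ on $B_r$.

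The crucial algebraic identity, generalizing the one used in the $C^{2,\alpha}$ proof, is
\begin{equation*}
(\Id+(\nabla\phi)^t)\nabla u_{hom}-\nabla\sum_{\kappa=1}^{k}\bigl(P_\kappa^{R,\mathrm{Taylor}}+\phi_i\partial_iP_\kappa^{R,\mathrm{Taylor}}\bigr)
=(\Id+(\nabla\phi)^t)\bigl(\nabla u_{hom}-\sum_\kappa\nabla P_\kappa^{R,\mathrm{Taylor}}\bigr)-\phi_i\nabla\partial_iu_{hom},
\end{equation*}
which allows me to convert the Taylor remainder into the quantity controlling $\widetilde\Exc_k(r)$ after adding and subtracting the $\psi$-corrections and the two-scale term $\phi_i\partial_iu_{hom}$. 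This produces the one-step estimate
\begin{equation*}
\widetilde\Exc_k(r)\leq C(d,k,\lambda)\bigl[(r/R)^{2k}+\bigl(\varepsilon_R^{2/(d+1)^2}+\varepsilon_{2,R}^2+\varepsilon_{\tilde\psi,r,R}^2\bigr)(r/R)^{-d}\bigr]\widetilde\Exc_k(R),
\end{equation*}
where the $\varepsilon_{2,R}^2$ term collects the small contributions from $\phi$, $\sigma$ and from the lower-order correctors $\psi_{P_\kappa}$, $\kappa\leq k-1$, whose growth is controlled by the inductive hypothesis through $\varepsilon_{2,R}$ (this is exactly where the inductive assumption on Theorem \ref{ExistenceHigherOrderCorrector} enters). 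A Caccioppoli estimate as in \eqref{CaccioppoliPhi} handles the $(\Id+\nabla\phi)^t$ factor.

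Given this one-step lemma, the iteration is essentially the same as in the proof of Lemma \ref{C2alphaExcessDecayGeneral}: choose $\theta=\theta(d,k,\lambda,\alpha)\in(0,1/4]$ with $C\theta^{2k}<\theta^{2(k-1)+2\alpha}$, then fix $\varepsilon_{\min}$ so small that the error terms contribute at most $\theta^{2(k-1)+2\alpha}$ on each step, and iterate on the geometric sequence $\theta^m R$ from $R$ down to $r_0$, finishing off the remaining factor by the trivial volume bound $\widetilde\Exc_k(r)\leq(r/r_M)^{-d}\widetilde\Exc_k(r_M)$. To obtain the estimates \eqref{EstimatePbDiff}--\eqref{EstimatePbSize}, I would generalize Lemma \ref{QuantifiedLinearIndependenceLemma} to the claim that, under suitable smallness of $\phi$, $\psi_{P_\kappa}$ and $\tilde\psi_{P_k}$,
\begin{equation*}
\sum_{\kappa=1}^{k}\rho^{2(\kappa-1)}\|P_\kappa\|^2\leq C(d,k)\dashint_{B_\rho}\Bigl|\nabla\sum_{\kappa=1}^{k}(P_\kappa+\phi_i\partial_iP_\kappa+\psi_{P_\kappa})\Bigr|^2\,dx
\end{equation*}
via Poincaré with zero mean, transversality of homogeneous polynomials of distinct degrees on $B_\rho$, and the smallness of the $\phi$- and $\psi$-perturbations. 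This "telescopes" exactly as in the $C^{2,\alpha}$ proof to yield the desired bounds by summing the geometric series.

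The main obstacle is the two-scale bookkeeping in the one-step lemma: one must carefully track the cascade of terms arising from Taylor-expanding $u_{hom}$ to order $k$ and inserting the two-scale ansatz $\sum_\kappa(P_\kappa^{R,\mathrm{Taylor}}+\phi_i\partial_iP_\kappa^{R,\mathrm{Taylor}}+\psi_{P_\kappa^{R,\mathrm{Taylor}}})$, organizing the errors into (i) the Taylor remainder, which must yield precisely the $(r/R)^{2k}$ gain, (ii) the homogenization error $\varepsilon_R^{2/(d+1)^2}$ from the approximation lemma, and (iii) the defect $\phi_i\nabla\partial_iu_{hom}$ and analogous cross-terms which, after using Caccioppoli-type interior estimates on derivatives of $u_{hom}$, are absorbed into the small $\varepsilon_{2,R}$-terms via the inductive growth control on the lower-order $\psi_{P_\kappa}$.
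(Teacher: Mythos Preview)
Your proposal is correct and follows essentially the same route as the paper: the one-step lemma you sketch is exactly Lemma~\ref{ExcessDecayLemmak}, the Campanato iteration and the trivial volume bound match the paper's argument verbatim, and your generalization of Lemma~\ref{QuantifiedLinearIndependenceLemma} is precisely Lemma~\ref{QuantifiedLinearIndependenceLemmaPk}, which is then telescoped as you describe. One small slip: in your algebraic identity the term on the right should be $-\sum_{\kappa}\phi_i\nabla\partial_iP_\kappa^{R,\mathrm{Taylor}}$ rather than $-\phi_i\nabla\partial_iu_{hom}$ (the latter enters separately when you add and subtract the two-scale approximation $u_{hom}+\phi_i\partial_iu_{hom}$), but this does not affect the structure of the argument.
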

\begin{proof}[Proof of Theorem \ref{CkalphaExcessDecay}]
Once we have shown Theorem \ref{ExistenceHigherOrderCorrector},
Theorem \ref{CkalphaExcessDecay} obviously follows from Lemma
\ref{CkalphaExcessDecayGeneral} by setting $\tilde\psi_{P_k}:=\psi_{P_k}$, with
$\psi_{P_k}$ being the $k$th-order corrector whose existence is established in
Theorem \ref{ExistenceHigherOrderCorrector}.
\end{proof}
The following lemma is essentially a special case of our $C^{k,\alpha}$
large-scale excess-decay estimate Lemma \ref{CkalphaExcessDecayGeneral};
it entails the general case of Lemma \ref{CkalphaExcessDecayGeneral}, cf.
below.
\begin{lemma}
\label{ExcessDecayLemmak}
Let $d\geq 2$, $k\geq 2$, and let $R,r>0$ satisfy $r<R/4$ and
$\varepsilon_{2,R}\leq \varepsilon_0(d,k-1,\lambda)$, with
$\varepsilon_0(d,k-1,\lambda)$ being the constant from Theorem
\ref{ExistenceHigherOrderCorrector} for the orders $2$, \ldots, $k-1$.
Assume that Theorem
\ref{ExistenceHigherOrderCorrector} holds for orders $2$, \ldots, $k-1$, and
let $\psi_P\equiv 0$ for linear polynomials $P$ in order to simplify notation.
For any $P\in \Pa{\kappa}$, denote by
$\tilde \psi_P$ a solution to the equation
\eqref{EquationPsiP} on the ball $B_R$ (without boundary conditions); assume
that $\tilde \psi_P$ depends linearly on $P$.
For an $a$-harmonic function $u$ on $B_R$, consider again again the $k$th-order
excess \eqref{kthOrderExcess}.
Then the excess on the smaller ball $B_r$ is estimated in terms of the excess
on the larger ball $B_R$ and our quantities $\varepsilon_{2,R}$ and $\nabla
\tilde \psi_P$:
We have
\begin{align*}
&\widetilde\Exc_k(r)
\leq
C(d,k,\lambda)
\widetilde\Exc_k(R)
\\&~~~~\times
\Bigg[
\left(\frac{r}{R}\right)^{2k}
+\left(\varepsilon_{2,R}^{2/(d+1)^2}
+R^{-2(k-1)}\max_{P\in \Pa{k},||P||=1} \dashint_{B_R} |\nabla \tilde\psi_P|^2
~dx \right)\left(\frac{r}{R}\right)^{-d}
\Bigg].
\end{align*}
\end{lemma}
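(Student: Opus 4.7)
The strategy is to mimic, step for step, the proof of Lemma~\ref{ExcessDecayLemma} for the $C^{2,\alpha}$ case, pushing the Taylor expansion of the homogenized solution from order $2$ to order $k$ and using the lower-order correctors $\psi_{P_\kappa}$ ($\kappa < k$), which are available on all of $\mathbb{R}^d$ by the inductive hypothesis (Theorem~\ref{ExistenceHigherOrderCorrector} for orders $2,\ldots,k-1$). First, as in the $k=2$ case, I reduce to $\widetilde\Exc_k(R) = \dashint_{B_R} |\nabla u|^2 \, dx$: by \eqref{EquationCorrector} and by Proposition~\ref{ExistenceCorrectorsPolynomialK} (applied at orders $2,\ldots,k-1$ with the global correctors $\psi_{P_\kappa}$, and at order $k$ with the local $\tilde\psi_{P_k}$ on $B_R$), each corrected polynomial $P_\kappa^{R,min} + \phi_i \partial_i P_\kappa^{R,min} + \psi_{P_\kappa^{R,min}}$ is $a$-harmonic on $B_R$, so it may be subtracted from $u$ without changing the claim.

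Next, I apply the homogenization approximation lemma (Lemma~\ref{ApproximationConstantCoefficient}) to obtain an $a_{hom}$-harmonic $u_{hom}$ on $B_{R/2}$ with $\dashint_{B_{R/2}} |\nabla u_{hom}|^2 \, dx \leq C \dashint_{B_R} |\nabla u|^2 \, dx$ and the two-scale bound
$$\dashint_{B_r} |\nabla u - \nabla(u_{hom} + \phi_i \partial_i u_{hom})|^2 \, dx \leq C \varepsilon_R^{2/(d+1)^2} \left(\frac{r}{R}\right)^{-d} \dashint_{B_R} |\nabla u|^2 \, dx.$$
Interior regularity for the constant-coefficient operator yields the Schauder-type bound $\sum_{\kappa=1}^{k+1} R^{\kappa-1}\sup_{B_{R/4}}|\nabla^\kappa u_{hom}| \leq C(d,k,\lambda)(\dashint_{B_R}|\nabla u|^2)^{1/2}$. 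Define $P_\kappa^{R,Taylor}$ as the $\kappa$-th homogeneous piece of the Taylor expansion of $u_{hom}$ at the origin; by differentiating the identity $(a_{hom})_{ij}\partial_i\partial_j u_{hom}\equiv 0$ one obtains $(a_{hom})_{ij}\partial_i\partial_j\partial^\beta u_{hom}\equiv 0$ for every multi-index $\beta$, so each $P_\kappa^{R,Taylor}$ lies in $\Pa{\kappa}$ and is thus an admissible competitor in $\widetilde\Exc_k(r)$. Taylor's theorem gives $|\nabla u_{hom}(x) - \sum_{\kappa=1}^k \nabla P_\kappa^{R,Taylor}(x)| \leq C|x|^k \sup_{B_{R/4}}|\nabla^{k+1}u_{hom}|$ for $x\in B_{R/4}$.

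The key algebraic identity, generalizing the one used for $k=2$, is the pointwise equality $(\Id + (\nabla\phi)^t)\nabla P - \nabla(P + \phi_i\partial_i P) = -\phi_i \nabla\partial_i P$ valid for every smooth $P$. Summing this over $\kappa = 1,\ldots,k$ with $P = P_\kappa^{R,Taylor}$ and combining with the Taylor bound, I decompose
$$\nabla u - \nabla\sum_{\kappa=1}^k \left(P_\kappa^{R,Taylor} + \phi_i\partial_i P_\kappa^{R,Taylor} + \psi_{P_\kappa^{R,Taylor}}\right)$$
(with $\tilde\psi$ replacing $\psi$ at $\kappa = k$) into four pieces, exactly analogous to the $k=2$ proof: (i) the homogenization remainder, contributing $\varepsilon_R^{2/(d+1)^2}(r/R)^{-d}\dashint_{B_R}|\nabla u|^2$; (ii) the Taylor remainder multiplied by $\Id+(\nabla\phi)^t$, contributing $(r/R)^{2k}\dashint_{B_R}|\nabla u|^2$ via the Schauder bound on $\sup|\nabla^{k+1}u_{hom}|$ together with the Caccioppoli estimate \eqref{CaccioppoliPhi}; (iii) the lower-order terms $\phi_i\nabla\partial_i u_{hom}$ and $\phi_i\nabla\partial_i P_\kappa^{R,Taylor}$, controlled by $\varepsilon_r^2\dashint_{B_R}|\nabla u|^2$ via sublinear growth of $\phi$; (iv) the corrector gradients $\nabla\psi_{P_\kappa^{R,Taylor}}$ for $\kappa < k$, estimated via the inductively known bound \eqref{EstimateGrowthHigherOrderCorrector} in terms of $\varepsilon_{2,R}$, together with $\nabla\tilde\psi_{P_k^{R,Taylor}}$, estimated through $\varepsilon_{\tilde\psi,r_0,R}$ and producing the explicit $R^{-2(k-1)}\max\dashint|\nabla\tilde\psi_P|^2$ factor. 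The Schauder bound converts $\|P_\kappa^{R,Taylor}\|^2$ into $R^{-2(\kappa-1)}\dashint_{B_R}|\nabla u|^2$; finally $\varepsilon_r^2\leq (R/r)^d\varepsilon_R^{2/(d+1)^2}$ absorbs lower-order $\varepsilon$-factors into the principal remainder, yielding the claimed bound.

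The main obstacle is the bookkeeping imposed by the asymmetry between orders: at orders $\kappa < k$ the global corrector $\psi_{P_\kappa}$ is available with the inductively known sublinear-growth bound controlled by $\varepsilon_{2,R}$, whereas at the top order $\kappa = k$ only the local $\tilde\psi_{P_k}$ on $B_R$ is given, whose contribution must appear as the explicit $R^{-2(k-1)}\max\dashint|\nabla\tilde\psi_P|^2$ factor. Tracking these two contributions separately, together with the $(k+1)$-st order Taylor expansion of $u_{hom}$ (replacing the third-order remainder of the $k=2$ case), is the technical core; no essentially new idea beyond the $k=2$ proof is required.
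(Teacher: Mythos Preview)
Your proposal is correct and follows essentially the same approach as the paper's own proof: the same reduction to $\widetilde\Exc_k(R)=\dashint_{B_R}|\nabla u|^2\,dx$, the same application of Lemma~\ref{ApproximationConstantCoefficient}, the same $(k+1)$-st order Taylor expansion of $u_{hom}$ with the algebraic identity $(\Id+(\nabla\phi)^t)\nabla P-\nabla(P+\phi_i\partial_i P)=-\phi_i\nabla\partial_i P$, and the same splitting of the error into the homogenization remainder, the Taylor remainder, the $\phi$-terms, and the corrector-gradient terms (treated asymmetrically for $\kappa<k$ via \eqref{EstimateGrowthHigherOrderCorrector} and for $\kappa=k$ via the explicit $\tilde\psi$-factor). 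Your identification of the bookkeeping asymmetry between the global $\psi_{P_\kappa}$ and the local $\tilde\psi_{P_k}$ as the only new wrinkle over the $k=2$ case is exactly the point.
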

Before proving Lemma \ref{ExcessDecayLemmak}, we would like to show how it
implies Lemma \ref{CkalphaExcessDecayGeneral}.
\begin{proof}[Proof of Lemma \ref{CkalphaExcessDecayGeneral}]
First choose $0<\theta\leq 1/4$ so small that the strict inequality
$C(d,k,\lambda)\theta^{2k}<\theta^{2(k-1)+2\alpha}$ is satisfied (with
$C(d,k,\lambda)$ being the constant from Lemma \ref{ExcessDecayLemmak}). Then,
choose the threshold $\varepsilon_{min}$ for
$\varepsilon_{2,r_0}+\varepsilon_{\tilde\psi,r_0,R}$ so small that the estimate
\begin{align*}
C(d,k,\lambda)
\left[\theta^{2k}
+\left(\varepsilon_{2,r_0}^{2/(d+1)^2}
+\varepsilon_{\tilde\psi,r_0,R}^2
\right)\theta^{-d}
\right]
\leq \theta^{2(k-1)+2\alpha}
\end{align*}
holds.

Let $M$ be the largest integer for which $\theta^M R \geq r$ holds.
Applying Lemma \ref{ExcessDecayLemmak} inductively with $R_m:=\theta^{m-1} R$,
$r_m:=\theta^m R$ for $1\leq m\leq M$, we infer
\begin{align*}
\widetilde\Exc_k(\theta^M R)\leq (\theta^{2(k-1)+2\alpha})^M
\widetilde\Exc_k(R).
\end{align*}
Since we have trivially
\begin{align*}
\widetilde\Exc_k(r)\leq
\left(\frac{r}{r_M}\right)^{-d} \widetilde\Exc_k(r_M)
\end{align*}
and since by definition of $M$ we have $r>\theta r_M$ and thus
$\theta^M<\theta^{-1}\frac{r}{R}$ (where we recall
$\theta=\theta(d,k,\lambda,\alpha)$), we infer
\begin{align*}
\widetilde\Exc_k(r)\leq
C(d,k,\lambda,\alpha)\left(\frac{r}{R}\right)^{2(k-1)+2\alpha}
\widetilde\Exc_k(R).
\end{align*}
It remains to show the estimates for
$||P_\kappa^{r,min}-P_\kappa^{R,min}||$ as well as the bounds for
$||P_\kappa^{r,min}||$. To do so, let us first estimate the differences
$||P_\kappa^{R_m,min}-P_\kappa^{r_m,min}||$ of
two successive polynomials.
We have the estimate
\begin{align*}
&\dashint_{B_{r_m}} \bigg|
\nabla \sum_{\kappa=1}^{k-1}
\Big(P_\kappa^{R_m,min}-P_\kappa^{r_m,min}+\phi_i\partial_i
(P_\kappa^{R_m,min}-P_\kappa^{r_m,min})
+\psi_{P_\kappa^{R_m,min}-P_\kappa^{r_m,min}}\Big)
\\&~~~~~~~~
+\nabla \Big(P_k^{R_m,min}-P_k^{r_m,min}+\phi_i\partial_i
(P_k^{R_m,min}-P_k^{r_m,min})
+\tilde\psi_{P_k^{R_m,min}-P_k^{r_m,min}}\Big)
\bigg|^2
~dx
\\&
\leq
2\dashint_{B_{r_m}} \bigg|\nabla u
-\nabla \sum_{\kappa=1}^{k-1}
\Big(P_\kappa^{r_m,min}+\phi_i\partial_i P_\kappa^{r_m,min}
+\psi_{P_\kappa^{r_m,min}}\Big)
\\&~~~~~~~~~~~~~~~~~~~~~~~
-\nabla \Big(P_k^{r_m,min}+\phi_i\partial_i P_k^{r_m,min}
+\tilde\psi_{P_k^{r_m,min}}\Big)
\bigg|^2 ~dx
\\&~~~
+2\dashint_{B_{r_m}} \bigg|\nabla u
-\nabla \sum_{\kappa=1}^{k-1}
\Big(P_\kappa^{R_m,min}+\phi_i\partial_i P_\kappa^{R_m,min}
+\psi_{P_\kappa^{R_m,min}}\Big)
\\&~~~~~~~~~~~~~~~~~~~~~~~~~~~
-\nabla \Big(P_k^{R_m,min}+\phi_i\partial_i P_k^{R_m,min}
+\tilde\psi_{P_k^{R_m,min}}\Big)
\bigg|^2 ~dx
\\&
\leq
2\widetilde\Exc_k(r_m)
+2\left(\frac{R_m}{r_m}\right)^d \widetilde\Exc_k(R_m)
\\&
\leq
C(d,k,\lambda,\alpha)
\left(\frac{r_m}{R}\right)^{2(k-1)+2\alpha}
\widetilde\Exc_k(R)
+C(d,k,\lambda,\alpha)\theta^{-d}\left(\frac{R_m}{R}\right)^{2(k-1)+2\alpha}
\widetilde\Exc_k(R)
\\&
\leq
C(d,k,\lambda,\alpha)
\left(\frac{r_m}{R}\right)^{2(k-1)}
(\theta^{2\alpha})^m \widetilde\Exc_k(R).
\end{align*}
From Lemma \ref{QuantifiedLinearIndependenceLemmaPk} below, we thus obtain
\begin{align*}
&
\sum_{\kappa=1}^k R^{\kappa-1} ||P_\kappa^{R_m,min}-P_\kappa^{r_m,min}||
\leq
C(d,k,\lambda,\alpha) (\theta^{\alpha})^m \sqrt{\widetilde\Exc_k(R)}.
\end{align*}
A similar estimate for the last increment
$\sum_{\kappa=1}^k R^{\kappa-1} ||P_\kappa^{r_M,min}-P_\kappa^{r,min}||$ can be
derived analogously. Taking the sum with respect to $m$ and recalling that
$R_1=R$ and $r_m=R_{m+1}$, we finally deduce
\begin{align*}
&
\sum_{\kappa=1}^k R^{\kappa-1} ||P_\kappa^{R,min}-P_\kappa^{r,min}||
\\
&\leq C(d,k,\lambda,\alpha) \sum_{m=1}^M (\theta^{\alpha})^m \sqrt{\widetilde
\Exc_k(R)}
\\
&\leq C(d,k,\lambda,\alpha) \sqrt{\widetilde
\Exc_k(R)}.
\end{align*}
It only
remains to establish the last estimate for
$||P_\kappa^{r,min}||$.
By the previous estimate, it is sufficient to prove the corresponding
bound for
$||P_\kappa^{R,min}||$. This in turn is a consequence of the obvious inequality
\begin{align*}
&\dashint_{B_R} \bigg|
\nabla \sum_{\kappa=1}^{k-1}
\Big(P_\kappa^{R,min}+\phi_i\partial_i P_\kappa^{R,min}
+\psi_{P_\kappa^{R,min}}\Big)
\\&~~~~~~~~~~
+\nabla \Big(P_k^{R,min}+\phi_i\partial_i P_k^{R,min}
+\tilde\psi_{P_k^{R,min}}\Big)
\bigg|^2 ~dx
\\&
\leq 2\widetilde\Exc_k(R)+2\dashint_{B_R} |\nabla u|^2 ~dx
\leq 4\dashint_{B_R} |\nabla u|^2 ~dx
\end{align*}
in conjunction with Lemma \ref{QuantifiedLinearIndependenceLemmaPk} below.
\end{proof}

The following lemma quantifies the linear independence of the corrected
polynomials
$P_\kappa+\phi_i \partial_i P_\kappa+\psi_{P_\kappa}$ (with $1\leq \kappa\leq
k$); it is needed for the previous proof.
\begin{lemma}
\label{QuantifiedLinearIndependenceLemmaPk}
Suppose that
the functions $\phi$ and $\tilde \psi_{P_\kappa}$ ($2\leq
\kappa\leq k$) satisfy
\begin{align*}
\rho^{-2}\dashint_{B_\rho} |\phi|^2 ~dx
+\sum_{\kappa=2}^k
\rho^{-2(\kappa-1)}\max_{P\in \Pa{\kappa}, ||P||=1} ||P||^{-2}
\dashint_{B_\rho} |\nabla \tilde \psi_{P}|^2
~dx \leq \varepsilon_0^2,
\end{align*}
where $\varepsilon_0=\varepsilon_0(d,k)$ is to be defined in the proof below.
Set $\tilde \psi_{P}\equiv 0$ for linear polynomials $P$ in order to simplify
notation.
Then for
any $P_\kappa\in \Pa{\kappa}$ ($1\leq
\kappa\leq k$) we have the estimate
\begin{align}
\sum_{\kappa=1}^{k}\rho^{2(\kappa-1)}||P_\kappa||^2
\label{QuantifiedLinearIndependencePk}
\leq C(d,k)
\dashint_{B_\rho} \bigg|
\nabla \sum_{\kappa=1}^{k}(P_\kappa+\phi_i
\partial_i P_\kappa+\tilde\psi_{P_\kappa})\bigg|^2 ~dx.
\end{align}
\end{lemma}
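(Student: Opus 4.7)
My proof plan is to follow verbatim the structure of Lemma~\ref{QuantifiedLinearIndependenceLemma} (the $k=2$ instance of the same statement), generalizing from constants/linear/quadratic to arbitrary degree $\le k$. The three ingredients are: (i) Poincar\'e with zero mean to pass from the gradient to the function, (ii) the triangle inequality to split the ``clean'' polynomial part from the $\phi$-correction and $\tilde\psi_{P_\kappa}$-correction terms, and (iii) transversality of the finite family of homogeneous polynomials of degrees $0,1,\ldots,k$ in $L^2(B_1)$.

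Concretely, I would set $F:=\sum_{\kappa=1}^k(P_\kappa+\phi_i\partial_iP_\kappa+\tilde\psi_{P_\kappa})$, $G:=\sum_{\kappa=1}^k P_\kappa$ and $H:=F-G$, and first apply Poincar\'e with zero mean to obtain $\rho^2\dashint_{B_\rho}|\nabla F|^2\ge C(d)^{-1}\inf_{a\in\mathbb{R}}\dashint_{B_\rho}|F-a|^2$. The triangle inequality in the form
\[
\Bigl(\inf_a\dashint_{B_\rho}|F-a|^2\Bigr)^{1/2}\ge\Bigl(\inf_a\dashint_{B_\rho}|G-a|^2\Bigr)^{1/2}-\Bigl(\inf_a\dashint_{B_\rho}|H-a|^2\Bigr)^{1/2}
\]
reduces the task to a lower bound on the $G$-term and an upper bound on the $H$-term.

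For the lower bound, I rescale $x=\rho y$, using that $P_\kappa$ is homogeneous of degree $\kappa$, to rewrite $\inf_a\dashint_{B_\rho}|G-a|^2=\inf_a\dashint_{B_1}|a+\sum_\kappa\rho^\kappa P_\kappa(y)|^2\,dy$. Since constants and homogeneous polynomials of distinct degrees $1,\ldots,k$ are linearly independent, the linear map $(a,P_1,\ldots,P_k)\mapsto a+\sum_\kappa P_\kappa$ from $\mathbb{R}\oplus\mathcal{P}^1\oplus\cdots\oplus\mathcal{P}^k$ into $L^2(B_1)$ is injective, so equivalence of norms on the finite-dimensional domain (with constants $c(d,k)$) yields $\inf_a\dashint_{B_\rho}|G-a|^2\ge c(d,k)\sum_{\kappa=1}^k\rho^{2\kappa}\|P_\kappa\|^2$. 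Dividing by $\rho^2$ recovers the claimed weighting $\sum_\kappa\rho^{2(\kappa-1)}\|P_\kappa\|^2$. For the upper bound on the $H$-term, I estimate each summand: $\dashint_{B_\rho}|\phi_i\partial_iP_\kappa|^2\le C(d,k)\|P_\kappa\|^2\rho^{2(\kappa-1)}\dashint_{B_\rho}|\phi|^2\le C(d,k)\varepsilon_0^2\|P_\kappa\|^2\rho^{2\kappa}$ using $|\partial_iP_\kappa|\le C(d,k)\|P_\kappa\|\rho^{\kappa-1}$ in $B_\rho$ together with the assumed smallness of $\rho^{-2}\dashint|\phi|^2$, and $\inf_a\dashint_{B_\rho}|\tilde\psi_{P_\kappa}-a|^2\le C(d)\rho^2\dashint|\nabla\tilde\psi_{P_\kappa}|^2\le C(d)\varepsilon_0^2\|P_\kappa\|^2\rho^{2\kappa}$ via Poincar\'e together with the assumed smallness of the normalized $\dashint|\nabla\tilde\psi_{P}|^2$.

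Choosing $\varepsilon_0=\varepsilon_0(d,k)$ small enough, the $H$-contribution is absorbed into (half of) the $G$-contribution, and \eqref{QuantifiedLinearIndependencePk} follows after squaring. I do not anticipate a serious obstacle; the only point deserving care is making sure all the $\rho$-weights line up under the scaling — which they do, because after pulling out the common factor $\rho^{2\kappa}$ on each $\kappa$-slice, the smallness $\varepsilon_0^2$ multiplies precisely the quantity appearing on the left-hand side of \eqref{QuantifiedLinearIndependencePk}, uniformly in $\kappa$.
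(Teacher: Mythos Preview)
Your proposal is correct and follows essentially the same argument as the paper's proof: Poincar\'e with zero mean, the triangle inequality to separate the pure polynomial part from the corrector contributions, transversality of homogeneous polynomials of distinct degrees for the lower bound, and the smallness hypothesis (together with Poincar\'e again for the $\tilde\psi_{P_\kappa}$ terms) for the upper bound. The only difference is cosmetic---you spell out the rescaling $x=\rho y$ behind the transversality step, which the paper leaves implicit.
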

\begin{proof}
Poincar\'e's inequality (with zero mean) and the triangle inequality imply
\begin{align*}
&\left(\dashint_{B_\rho} \bigg|
\nabla \sum_{\kappa=1}^{k}(P_\kappa+\phi_i
\partial_i P_\kappa+\tilde\psi_{P_\kappa})\bigg|^2 ~dx\right)^{1/2}
\\
&\geq
\frac{1}{C(d)} \frac{1}{\rho}
\inf_{a\in \mathbb{R}}
\left(\dashint_{B_\rho} \bigg|
\sum_{\kappa=1}^{k}(P_\kappa+\phi_i \partial_i
P_\kappa+\tilde\psi_{P_\kappa}) -a
\bigg|^2 ~dx\right)^{1/2}
\\
&\geq
\frac{1}{C(d)} \frac{1}{\rho}
\Bigg[
\inf_{a\in \mathbb{R}}
\left(\dashint_{B_\rho} \bigg|
\sum_{\kappa=1}^{k}P_\kappa
-a\bigg|^2 ~dx\right)^{1/2}
\\&~~~~~~~~~~~~~~
-\inf_{a\in \mathbb{R}} \left(\dashint_{B_\rho}
\bigg|
\sum_{\kappa=1}^{k}(\phi_i \partial_i P_\kappa+\tilde\psi_{P_\kappa})
-a\bigg|^2 ~dx\right)^{1/2}
\Bigg].
\end{align*}
On the one hand, by transversality of constant, linear, homogeneous
second-order, \ldots, and homogeneous $k$th-order polynomials we have
\begin{align*}
&\frac{1}{\rho}\inf_{a\in \mathbb{R}}
\left(\dashint_{B_\rho} \bigg|
\sum_{\kappa=1}^{k}P_\kappa
-a\bigg|^2 ~dx\right)^{1/2}
\geq \frac{1}{C(d,k)}
\sum_{\kappa=1}^{k}\rho^{\kappa-1} ||P_\kappa||
.
\end{align*}
On the other hand, we have by the triangle inequality and Poincar\'e's
inequality
\begin{align*}
&\frac{1}{\rho}\inf_{a\in \mathbb{R}} \left(\dashint_{B_\rho}
\bigg|
\sum_{\kappa=1}^{k}(\phi_i \partial_i
P_\kappa+\tilde\psi_{P_\kappa})
-a\bigg|^2 ~dx\right)^{1/2}
\\&
\leq C(d,k)
\Bigg[\bigg(
\sum_{\kappa=1}^k
\rho^{\kappa-1}||P_\kappa||\bigg)
\frac{1}{\rho}\left(\dashint_{B_\rho} |\phi|^2 ~dx\right)^{1/2}
\\&~~~~~~~~~~~~~~~
+\sum_{\kappa=2}^k
\rho^{\kappa-1}||P_\kappa||
\frac{1}{\rho^{\kappa-1}}
\max_{P\in \P{\kappa},||P||=1}
\left(\dashint_{B_\rho} |\nabla \tilde\psi_{P}|^2 ~dx\right)^{1/2}
\Bigg].
\end{align*}
Putting these estimates together, by boundedness of the integrals in the
previous line by $\varepsilon_0^2 \rho^{2(\kappa-1)}$ our assertion is
established.
\end{proof}

\begin{proof}[Proof of Lemma \ref{ExcessDecayLemmak}]
In the proof of the lemma, we may assume that
\begin{align}
\label{AssumptionBPZero}
\widetilde\Exc_k(R)=\dashint_{B_R} |\nabla u|^2 ~dx.
\end{align}
To see this,
recall that the infimum in the definition of $\widetilde \Exc_k(R)$ is actually
attained. Denote the corresponding choices of
$P_\kappa$ by
$P_\kappa^{min}$. Replacing $u$ by
$u
-\sum_{\kappa=1}^{k-1}(P_\kappa^{min} + \phi_i
\partial_i P_\kappa^{min} + \psi_{P_\kappa^{min}})
-(P_k^{min} + \phi_i \partial_i P_k^{min} + \tilde\psi_{P_k^{min}}))$, we see
that we may indeed assume \eqref{AssumptionBPZero}:
The new function is also $a$-harmonic due to \eqref{EquationCorrector} and
Proposition \ref{ExistenceCorrectorsPolynomialK}.

We then apply Lemma \ref{ApproximationConstantCoefficient} below to our
function $u$.
This yields an $a_{hom}$-harmonic function $u_{hom}$ close to $u$ which in
particular satisfies
\begin{align*}
\dashint_{B_{R/2}} |\nabla u_{hom}|^2 ~dx
\leq C(d,\lambda) \dashint_{B_R} |\nabla u|^2 ~dx.
\end{align*}
By inner regularity theory for elliptic equations with constant coefficients,
the $a_{hom}$-harmonic function $u_{hom}$ satisfies
\begin{align}
\label{EstimateDuhom}
&|\nabla u_{hom}(0)|
+R \sup_{B_{R/4}} |\nabla^2 u_{hom}|
+\sum_{\kappa=2}^k R^{\kappa} \sup_{B_{R/4}}|\nabla^{\kappa+1} u_{hom}|
\\&
\nonumber
\leq C(d,k,\lambda) \left(\dashint_{B_{R/2}}
|\nabla u_{hom}|^2 ~dx\right)^{1/2}
\leq C(d,k,\lambda) \left(\dashint_{B_{R}} |\nabla u|^2
~dx\right)^{1/2}.
\end{align}
Let
$P_\kappa^{R,Taylor}$ (for $1\leq \kappa\leq k$) be the term of order 
$\kappa$ in the Taylor expansion of $u_{hom}$ at $x_0=0$.
We now show (for $\kappa\geq 2$, as for $\kappa=1$ this assertion is trivial)
that $P_\kappa^{R,Taylor}\in \Pa{\kappa}$.
The term-wise Hessian of the Taylor series of
$u_{hom}$ yields the Taylor series of $\nabla^2 u_{hom}$. We now know that
$a_{hom}:\nabla^2 u_{hom}=0$; thus, the Taylor series of $a_{hom}:\nabla^2
u_{hom}$ is identically zero and
by equating the coefficients we deduce $a_{hom}:\nabla^2
P_\kappa^{R,Taylor}=0$ for $2\leq \kappa\leq k$.

As the term-wise derivative of the Taylor series of $u_{hom}$ yields the
Taylor series of $\nabla u_{hom}$, we obtain by the standard error estimate
for the Taylor expansion of $\nabla u_{hom}$ at $x_0=0$ for any $x\in B_{R/4}$
the estimate
\begin{align*}
\bigg|\nabla u_{hom}(x)
-\sum_{\kappa=1}^k \nabla P_\kappa^{R,Taylor}(x)
\bigg|
\leq |x|^k \sup_{B_{R/4}}|\nabla^{k+1} u_{hom}|.
\end{align*}

Making use of the identity
\begin{align*}
&(\Id+(\nabla \phi)^t)\nabla u_{hom}
-\nabla \sum_{\kappa=1}^{k} (P_\kappa^{R,Taylor}+\phi_i\partial_i
P_\kappa^{R,Taylor})
\\&
+\sum_{\kappa=2}^{k} \phi_i \nabla \partial_i P_\kappa^{R,Taylor}
\\&
=(\Id+(\nabla \phi)^t)
\left(\nabla u_{hom}(x)
-\sum_{\kappa=1}^k \nabla P_\kappa^{R,Taylor}(x)\right),
\end{align*}
the previous estimate yields in connection with the bound for $|\nabla^{k+1}
u_{hom}|$ and $r<R/4$
\begin{align*}
&\dashint_{B_r} \bigg|
(\Id+(\nabla \phi)^t)\nabla u_{hom}
-\nabla \sum_{\kappa=1}^{k} (P_\kappa^{R,Taylor}+\phi_i\partial_i
P_\kappa^{R,Taylor})
\\&~~~~~~
+\sum_{\kappa=2}^{k} \phi_i \nabla \partial_i P_\kappa^{R,Taylor}
\bigg|^2
~dx
\\&
\leq
C(d,k,\lambda) \left(\frac{r}{R}\right)^{2k}
\dashint_{B_R} |\nabla u|^2 ~dx \times \dashint_{B_r} |\Id+(\nabla\phi)^t|^2
~dx.
\end{align*}
By the Caccioppoli inequality for the $a$-harmonic function $x_i+\phi_i$ (cf.
\eqref{EquationCorrector}), we have
\begin{align}
\label{CaccioppoliPhi2}
\dashint_{B_r} |\Id+(\nabla \phi)^t|^2 ~dx
\leq \frac{C(d,\lambda)}{r^2} \dashint_{B_{2r}} |x+\phi|^2 ~dx
\leq C(d,\lambda) (1+\varepsilon_{2r}^2).
\end{align}
The approximation property of $u_{hom}+\phi_i \partial_i u_{hom}$ in $B_{R/2}$
from Lemma \ref{ApproximationConstantCoefficient} below implies
\begin{align*}
\dashint_{B_r} |\nabla u-\nabla (u_{hom}+\phi_i \partial_i u_{hom})|^2 ~dx
\leq
C(d,\lambda)\varepsilon_{R}^{2/(d+1)^2} \left(\frac{r}{R}\right)^{-d}
\dashint_{B_R} |\nabla u|^2 ~dx.
\end{align*}
Combining the last three estimates and the equality
\begin{align*}
&\nabla u
-\nabla \sum_{\kappa=1}^{k-1}
\Big(P_\kappa^{R,Taylor}
+\phi_i \partial_i P_\kappa^{R,Taylor}
+\psi_{P_\kappa^{R,Taylor}}
\Big)
\\&
-\nabla \Big(P_k^{R,Taylor}
+\phi_i \partial_i P_k^{R,Taylor}
+\tilde \psi_{P_k^{R,Taylor}}
\Big)
\\
=&
\bigg[
(\Id+(\nabla \phi)^t)\nabla u_{hom}
-\nabla \sum_{\kappa=1}^{k}
\Big(P_\kappa^{R,Taylor}
+\phi_i \partial_i P_\kappa^{R,Taylor}
\Big)
\\&~~~
+\sum_{\kappa=2}^k \phi_i \nabla \partial_i P_\kappa^{R,Taylor}\bigg]
-\sum_{\kappa=2}^k \phi_i \nabla \partial_i P_\kappa^{R,Taylor}
-\sum_{\kappa=2}^{k-1} \nabla \psi_{P_\kappa^{R,Taylor}}
-\nabla \tilde\psi_{P_k^{R,Taylor}}
\\&
+\Big[\nabla u-\nabla (u_{hom}+\phi_i \partial_i u_{hom})\Big]
+\phi_i \nabla \partial_i u_{hom}
,
\end{align*}
we infer
\begin{align*}
&\dashint_{B_r} \bigg|
\nabla u
-\nabla \sum_{\kappa=1}^{k-1}
\Big(P_\kappa^{R,Taylor}
+\phi_i \partial_i P_\kappa^{R,Taylor}
+\psi_{P_\kappa^{R,Taylor}}
\Big)
\\&~~~~~~
-\nabla \Big(P_k^{R,Taylor}
+\phi_i \partial_i P_k^{R,Taylor}
+\tilde \psi_{P_k^{R,Taylor}}
\Big)
\bigg|^2
~dx
\\&
\leq
6\dashint_{B_r} \bigg|
(\Id+(\nabla \phi)^t)\nabla u_{hom}
-\nabla \sum_{\kappa=1}^{k}
\Big(P_\kappa^{R,Taylor}
+\phi_i \partial_i P_\kappa^{R,Taylor}
\Big)
\\&~~~~~~~~~~~~~~
+\sum_{\kappa=2}^k \phi_i \nabla \partial_i P_\kappa^{R,Taylor}
\bigg|^2
~dx
\\&~~~
+C(k)\dashint_{B_r} \sum_{\kappa=2}^k |\nabla^2 P_\kappa^{R,Taylor}|^2 |\phi|^2
~dx
\\&~~~
+C(k)\dashint_{B_r} \sum_{\kappa=2}^{k-1}
\big|\nabla \psi_{P_\kappa}^{R,Taylor}\big|^2 ~dx
\\&~~~
+6\dashint_{B_r} \big|\nabla \tilde \psi_{P_k^{R,Taylor}}\big|^2 ~dx
\\&~~~
+6\dashint_{B_r} | \nabla u-\nabla (u_{hom}+\phi_i \partial_i
u_{hom})|^2 ~dx
\\&~~~
+6\dashint_{B_r} |\phi_i \nabla \partial_i u_{hom}|^2 ~dx
\\&
\leq C(d,k,\lambda) 
\left(\frac{r}{R}\right)^{2k}
\left(1+\varepsilon_r^2\right)
\dashint_{B_R} |\nabla u|^2 ~dx
\\&~~~
+C(d,k) \sum_{\kappa=2}^k r^{2(\kappa-1)} ||P_\kappa^{R,Taylor}||^2
\varepsilon_{r}^2
\\&~~~
+C(d,k) \sum_{\kappa=2}^{k-1} ||P_\kappa^{R,Taylor}||^2
\max_{P \in \P{\kappa}} ||P||^{-2} \dashint_{B_r} |\nabla
\psi_{P}|^2 ~dx
\\&~~~
+C(d,k) ||P_k^{R,Taylor}||^2
\max_{P\in \Pa{k}} ||P||^{-2} \dashint_{B_r} |\nabla \tilde\psi_{P}|^2 ~dx
\\&~~~
+C(d,\lambda)\varepsilon_{R}^{2/(d+1)^2} \left(\frac{r}{R}\right)^{-d}
\dashint_{B_R} |\nabla u|^2 ~dx
\\&~~~
+C(d) r^2\varepsilon_r^2 \sup_{B_{R/4}} |\nabla^2 u_{hom}|^2.
\end{align*}
This finally yields in connection with the bounds on $\nabla^\kappa
u_{hom}$ in $B_{R/4}$ (cf.\ \eqref{EstimateDuhom}) which in particular imply
$||P_\kappa^{R,Taylor}||\leq C(d,k,\lambda) R^{1-\kappa} \left(\dashint_{B_R}
|\nabla u|^2 ~dx\right)^{1/2}$
\begin{align*}
&\dashint_{B_r} \bigg|
\nabla u
-\nabla \sum_{\kappa=1}^{k-1}
\Big(P_\kappa^{R,Taylor}
+\phi_i \partial_i P_\kappa^{R,Taylor}
+\psi_{P_\kappa^{R,Taylor}}
\Big)
\\&~~~~~~
-\nabla \Big(P_k^{R,Taylor}
+\phi_i \partial_i P_k^{R,Taylor}
+\tilde \psi_{P_k^{R,Taylor}}
\Big)
\bigg|^2
~dx
\\
&\leq
C(d,k,\lambda) 
\left(\frac{r}{R}\right)^{2k}
\left(1+\varepsilon_r^2\right)
\dashint_{B_R} |\nabla u|^2 ~dx
\\&~~~
+C(d,k,\lambda)\varepsilon_{r}^2
\sum_{\kappa=2}^k \left(\frac{r}{R}\right)^{2(\kappa-1)}
\dashint_{B_R} |\nabla
u|^2 ~dx
\\&~~~
+C(d,k,\lambda)
\dashint_{B_R} |\nabla u|^2 ~dx
\sum_{\kappa=2}^{k-1}
R^{-2(\kappa-1)}
\max_{P \in \P{\kappa}} ||P||^{-2} \dashint_{B_r} |\nabla
\psi_{P}|^2 ~dx
\\&~~~
+C(d,k,\lambda)
\dashint_{B_R} |\nabla u|^2 ~dx \times 
R^{-2(k-1)} \max_{P\in \Pa{k}}
||P||^{-2} \dashint_{B_r} |\nabla \tilde\psi_{P}|^2 ~dx
\\&~~~
+C(d,\lambda)\varepsilon_{R}^{2/(d+1)^2} \left(\frac{r}{R}\right)^{-d}
\dashint_{B_R} |\nabla u|^2 ~dx
\\&~~~
+C(d,\lambda) r^2\varepsilon_r^2 R^{-2} \dashint_{B_R} |\nabla u|^2 ~dx
\\&
\leq
C(d,k,\lambda)
\dashint_{B_R} |\nabla u|^2 ~dx
\bigg[
\left(\frac{r}{R}\right)^{2k}
\\&~~~~~~~~~~~~~~~~~~~
+\left(\varepsilon_{2,R}^{2/(d+1)^2}
+R^{-2(k-1)} \max_{P\in \Pa{k},||P||=1} \dashint_{B_R} |\nabla
\tilde\psi_{P}|^2 ~dx \right)
\left(\frac{r}{R}\right)^{-d}
\bigg],
\end{align*}
where in the last step we have used the inequality $\varepsilon_{r}^2\leq
\left(\frac{R}{r}\right)^d \varepsilon_{R}^2\leq \left(\frac{R}{r}\right)^d
\varepsilon_{R}^{2/(d+1)^2}$ and $\varepsilon_R\leq \varepsilon_{2,R}$ as well
as \eqref{EstimateGrowthHigherOrderCorrector} for $2\leq \kappa\leq k-1$.
Our new estimate now implies the desired bound.
\end{proof}

\subsection{The $C^{k-1,1}$ excess-decay estimate}

Like in the $C^{2,\alpha}$ case, we now show how the $C^{k,\alpha}$ excess-decay
estimate for the $k$th-order excess $\widetilde\Exc_k$ (cf. Lemma
\ref{CkalphaExcessDecayGeneral}) entails a $C^{k-1,1}$ excess-decay estimate for
the $(k-1)$th-order excess $\Exc_{k-1}$.
\begin{lemma}
\label{Ck-11Estimate}
Let $d\geq 2$, $k\geq 2$, and $R>0$. Assume that Theorem
\ref{ExistenceHigherOrderCorrector} holds for the orders $2$, \ldots, $k-1$,
and let $\psi_P\equiv 0$ for linear polynomials $P$ in order to simplify
notation.
For any $P\in \Pa{\kappa}$, denote by
$\tilde \psi_P$ a solution to the equation
\eqref{EquationPsiP} on the ball $B_R$ (without boundary conditions); assume
that the $\tilde\psi_P$ depend linearly on $P$.
Then there exists a constant $\varepsilon_{min}>0$ depending only on $d$, $k$,
and $\lambda$ such that the following assertion holds:

Suppose $r_0\in (0,R]$ is so large that $\varepsilon_{2,r_0}\leq
\varepsilon_{min}$ and
\begin{align*}
\sup_{r_0\leq \rho\leq R} \rho^{-(k-1)}
\left(\max_{P\in \Pa{k},||P||=1}
\dashint_{B_\rho} |\nabla \tilde \psi_P|^2 ~dx\right)^{1/2}
\leq\varepsilon_{min}
\end{align*}
hold. Let $u$ be an $a$-harmonic function on $B_R$. Then there exist
$P^R_\kappa \in \Pa{\kappa}$ ($1\leq
\kappa\leq k-1$) for which the estimate
\begin{align*}
&\dashint_{B_r} \left|\nabla u
-\nabla \sum_{\kappa=1}^{k-1} \left(P_\kappa^R + \phi_i \partial_i
P_\kappa^R + \psi_{P_\kappa^R}\right) \right|^2 ~dx
\\&
\leq C(d,k,\lambda)
\left(\frac{r}{R}\right)^{2(k-1)}
\dashint_{B_R} |\nabla u|^2 ~dx
\end{align*}
holds for any $r\in [r_0,R]$.
Furthermore, the
$P^R_\kappa$ depend linearly on $u$ and satisfy
\begin{align*}
\sum_{\kappa=1}^{k-1} R^{2(\kappa-1)} ||P_\kappa^R||^2
\leq C(d,k,\lambda) \dashint_{B_R} |\nabla u|^2 ~dx.
\end{align*}
\end{lemma}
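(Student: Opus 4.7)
The strategy mirrors that of Lemma~\ref{C11Estimate}: I invoke the $C^{k,\alpha}$ excess-decay of Lemma~\ref{CkalphaExcessDecayGeneral} with $\alpha:=1/2$, pick the polynomials minimizing the $k$th-order excess at the small scale $r_0$ to define $P^R_\kappa$, and then estimate the error obtained by dropping the degree-$k$ term and transferring the optimal choice from scale $r$ back to scale $r_0$. Shrinking the threshold $\varepsilon_{\min}$ of the present lemma so that it also forces the hypothesis $\varepsilon_{2,r_0}+\varepsilon_{\tilde\psi,r_0,R}\le\varepsilon_{\min}$ of Lemma~\ref{CkalphaExcessDecayGeneral} (with $\alpha=1/2$) to hold, the decay \eqref{kthOrderExcessDecay} together with the auxiliary bounds \eqref{EstimatePbDiff} and \eqref{EstimatePbSize} become available.

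Set $P^R_\kappa:=P^{r_0,\min}_\kappa$ for $1\le\kappa\le k-1$; linear dependence on $u$ is inherited from the quadratic structure of $\widetilde\Exc_k$, and \eqref{EstimatePbSize} applied at scale $r_0$ yields the asserted size bound $\sum_{\kappa=1}^{k-1}R^{2(\kappa-1)}\|P^R_\kappa\|^2\le C(d,k,\lambda)\dashint_{B_R}|\nabla u|^2\,dx$ (since $r_0\le R$). Applying Lemma~\ref{CkalphaExcessDecayGeneral} a second time, now with $r_0$ playing the role of $r$ and $r$ playing the role of $R$, and combining \eqref{EstimatePbDiff} with the decay \eqref{kthOrderExcessDecay} at scale $r$, I obtain
\[\sum_{\kappa=1}^{k} r^{2(\kappa-1)}\|P^R_\kappa-P^{r,\min}_\kappa\|^2 \le C(d,k,\lambda)\left(\frac{r}{R}\right)^{2k-1}\dashint_{B_R}|\nabla u|^2\,dx,\]
with the convention $P^R_k:=0$. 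The main estimate then comes from decomposing $\nabla u-\nabla\sum_{\kappa=1}^{k-1}(P^R_\kappa+\phi_i\partial_i P^R_\kappa+\psi_{P^R_\kappa})$ on $B_r$ into three pieces: (i) the minimizer residual at scale $r$, whose $L^2(B_r)$ mean is $\widetilde\Exc_k(r)$; (ii) the discarded degree-$k$ corrected polynomial $\nabla(P^{r,\min}_k+\phi_i\partial_i P^{r,\min}_k+\tilde\psi_{P^{r,\min}_k})$; (iii) the difference $\nabla\sum_{\kappa=1}^{k-1}[(P^{r,\min}_\kappa-P^R_\kappa)+\phi_i\partial_i(P^{r,\min}_\kappa-P^R_\kappa)+\psi_{P^{r,\min}_\kappa-P^R_\kappa}]$. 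Pieces (i) and (iii) decay at the faster rate $(r/R)^{2k-1}$, thanks to the above difference bound, the Caccioppoli inequality \eqref{CaccioppoliPhi2}, the smallness of $\varepsilon_{\tilde\psi,r_0,R}$, and the growth bound \eqref{EstimateGrowthHigherOrderCorrector} supplied by the inductive availability of Theorem~\ref{ExistenceHigherOrderCorrector} in orders $2,\ldots,k-1$.

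The delicate step is piece (ii): dropping the top-order polynomial forfeits the gain $(r/R)^{2\alpha}$, so I must be content with the $C^{k-1,1}$ rate. The key observation is that \eqref{EstimatePbSize} supplies $R^{2(k-1)}\|P^{r,\min}_k\|^2\le C\dashint_{B_R}|\nabla u|^2\,dx$, while the $L^2(B_r)$-mean of the gradient of the corrected polynomial is controlled by $r^{2(k-1)}\|P^{r,\min}_k\|^2$ up to multiplicative factors $(1+\varepsilon_{2r}^2)$ from Caccioppoli applied to $x_i+\phi_i$ and $\varepsilon_{\tilde\psi,r_0,R}^2$ from $\nabla\tilde\psi$. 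Combining these produces a bound $(r/R)^{2(k-1)}\dashint_{B_R}|\nabla u|^2\,dx$ that exactly matches the target rate, once the $(1+\varepsilon^2)$ factors are absorbed by a final smallness choice of $\varepsilon_{\min}$; the contributions of pieces (i) and (iii) then fold harmlessly into the same bound.
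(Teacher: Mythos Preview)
Your proof is correct and follows the same route as the paper: fix $\alpha=1/2$ in Lemma~\ref{CkalphaExcessDecayGeneral}, set $P_\kappa^R:=P_\kappa^{r_0,\min}$, split the error into the three pieces (i)--(iii), and control (ii) via the size bound \eqref{EstimatePbSize} while (i) and (iii) get the faster rate from \eqref{kthOrderExcessDecay} and \eqref{EstimatePbDiff}. One small slip: your displayed inequality $\sum_{\kappa=1}^{k} r^{2(\kappa-1)}\|P^R_\kappa-P^{r,\min}_\kappa\|^2 \le C(r/R)^{2k-1}\dashint_{B_R}|\nabla u|^2$ is \emph{false} under the convention $P_k^R:=0$, since the $\kappa=k$ term then equals $r^{2(k-1)}\|P_k^{r,\min}\|^2$, which only decays like $(r/R)^{2(k-1)}$ by \eqref{EstimatePbSize}; the inequality is correct if instead $P_k^R:=P_k^{r_0,\min}$, which is what \eqref{EstimatePbDiff} actually delivers. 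This does not damage your argument, because you never use the $\kappa=k$ summand of that formula---you treat it separately and correctly in piece~(ii).
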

\begin{proof}
In Lemma \ref{CkalphaExcessDecayGeneral}, fix $\alpha:=1/2$. We then easily
verify that Lemma \ref{CkalphaExcessDecayGeneral} is applicable in our
situation.
Set
$P_\kappa^R:=P_\kappa^{r_0,min}$; this implies that
the $P_\kappa^R$ depend linearly on $u$.
The estimate \eqref{EstimatePbSize} takes the form
\begin{align*}
\sum_{\kappa=1}^k R^{2(\kappa-1)} ||P_\kappa^R||^2
\leq C(d,k,\lambda) \dashint_{B_R} |\nabla u|^2 ~dx.
\end{align*}
Furthermore, applying Lemma \ref{CkalphaExcessDecayGeneral} with $r_0$ playing
the role of $r$ and $r$ playing the role of $R$, we deduce from
\eqref{EstimatePbDiff}
\begin{align*}
&
\sum_{\kappa=1}^k r^{2(\kappa-1)} 
||P_\kappa^R-P_\kappa^{r,min}||^2
\\&
\leq C(d,k,\lambda) \widetilde\Exc_k(r)
\\
&\stackrel{\eqref{kthOrderExcessDecay}}{\leq}
C(d,k,\lambda) \left(\frac{r}{R}\right)^{2(k-1)+2\alpha}
\widetilde\Exc_k(R)
\\
&\leq C(d,k,\lambda) \left(\frac{r}{R}\right)^{2(k-1)+2\alpha}
\dashint_{B_R} |\nabla u|^2 ~dx.
\end{align*}
We now estimate
\begin{align*}
&\dashint_{B_r} \bigg|\nabla u
-\nabla \sum_{\kappa=1}^{k-1} \left(P_\kappa^R + \phi_i \partial_i
P_\kappa^R + \psi_{P_\kappa^R}\right)
\bigg|^2 ~dx
\\&
\leq
3\dashint_{B_r} \bigg|\nabla u
-\nabla\sum_{\kappa=1}^{k-1} \left(P_\kappa^{r,min} + \phi_i \partial_i
P_\kappa^{r,min} + \psi_{P_\kappa^{r,min}}\right)
\\&~~~~~~~~~~~~
-\nabla \Big(P_k^{r,min} + \phi_i \partial_i P_k^{r,min}
+ \tilde \psi_{P_k^{r,min}}\Big) \bigg|^2 ~dx
\\&~~~
+3\dashint_{B_r} \left|\nabla\Big(
P_k^{r,min} + \phi_i \partial_i P_k^{r,min} + \tilde \psi_{P_k^{r,min}}
\Big)\right|^2 ~dx
\\&~~~
+3\dashint_{B_r} \bigg|\nabla
\sum_{\kappa=1}^{k-1} \Big(P_\kappa^{r,min}-P_\kappa^{R}
+\phi_i \partial_i (P_\kappa^{r,min}-P_\kappa^R)
+\psi_{P_\kappa^{r,min}-P_\kappa^R}\Big)
\bigg|^2 ~dx
\\&
\leq
3 \widetilde\Exc_k(r)
\\&~~~
+C(d,k)
||P_k^{r,min}||^2 r^{2(k-2)} \Bigg(\dashint_{B_r} |\phi|^2
+r^2 |\Id+(\nabla \phi)^t|^2 ~dx
\\&~~~~~~~~~~~~~~~~~~~~~~~~~~~~~~~~~~~~~~~~~
+r^{-2(k-2)}\max_{P\in \Pa{k}, ||P||=1} \dashint_{B_r} |\nabla \tilde \psi_P|^2
~dx \Bigg)
\\&~~~
+C(d,k) 
\sum_{\kappa=1}^{k-1} r^{2(\kappa-1)}
||P_\kappa^{r,min}-P_\kappa^R||^2
\dashint_{B_r} |\Id+(\nabla\phi)^t|^2 ~dx
\\&~~~
+C(d,k)
\sum_{\kappa=2}^{k-1} ||P_\kappa^{r,min}-P_\kappa^R||^2
\max_{P\in \Pa{\kappa},||P||=1}
\dashint_{B_r} r^{2(\kappa-2)} |\phi|^2 + |\nabla \psi_{P}|^2 ~dx
\\&
\stackrel{(\ref{EstimateGrowthHigherOrderCorrector},\ref{kthOrderExcessDecay},
\ref{CaccioppoliPhi2})}{\leq~~~~~~~~}
C(d,k,\lambda) \left(\frac{r}{R}\right)^{2(k-1)+2\alpha} \widetilde\Exc_k(R)
\\&~~~
+C(d,k,\lambda) ||P^{r,min}_k||^2 r^{2(k-1)}
(\varepsilon_r^2 + (1+\varepsilon_{2r}^2) + \varepsilon_{\tilde\psi,r_0,R}^2)
\\&~~~
+C(d,k,\lambda)
\sum_{\kappa=1}^{k-1} r^{2(\kappa-1)}||P_\kappa^{r,min}-P_\kappa^R||^2
(1+\varepsilon_{2r}^2)
\\&~~~
+C(d,k,\lambda) 
\sum_{\kappa=2}^{k-1} r^{2(\kappa-1)}||P_\kappa^{r,min}-P_\kappa^R||^2
(\varepsilon_{r}^2+\varepsilon_{2,r}^2)
\\&
\leq
C(d,k,\lambda) \left(\frac{r}{R}\right)^{2(k-1)+2\alpha} \widetilde\Exc_k(R)
+C(d,k,\lambda) ||P_k^{r,min}||^2 r^{2(k-1)}
\\&~~~
+C(d,k,\lambda) \sum_{\kappa=1}^{k-1}
r^{2(\kappa-1)}||P_\kappa^{r,min}-P_\kappa^R||^2.
\end{align*}
In conjunction with the two previous estimates, we infer
\begin{align*}
&\dashint_{B_r} \bigg|\nabla u
-\nabla \sum_{\kappa=1}^{k-1} \left(P_\kappa^R + \phi_i \partial_i
P_\kappa^R + \psi_{P_\kappa^R}\right)
\bigg|^2 ~dx
\\&
\leq
C(d,k,\lambda) \bigg[\left(\frac{r}{R}\right)^{2(k-1)+2\alpha}
+\left(\left(\frac{r}{R}\right)^{2(k-1)}
+\left(\frac{r}{R}\right)^{2(k-1)+2\alpha}\right)
+\left(\frac{r}{R}\right)^{2(k-1)+2\alpha}
\bigg]
\\&~~~~~~\times
\dashint_{B_R} |\nabla u|^2~dx.
\end{align*}
Our lemma is therefore established.
\end{proof}

\subsection{Construction of correctors of order $k$}

Using the $C^{k-1,1}$ theory established in the previous subsection, we now
proceed to the construction of our $k$th-order corrector. The following lemma
provides the inductive step; starting from a function which acts as a
$k$th-order corrector on a ball $B_R$, we construct a function acting as a
$k$th-order corrector on the ball $B_{2R}$.

\begin{lemma}
\label{IterativeConstructionkthCorrector}
Let $d\geq 2$, $k\geq 2$, and assume that Theorem
\ref{ExistenceHigherOrderCorrector} holds for the orders $2$, \ldots, $k-1$.
Let $r_0>0$ satisfy the estimate $\varepsilon_{2,r_0}\leq \varepsilon_0$, where
$\varepsilon_0=\varepsilon_0(d,k,\lambda)$ is to be chosen in the proof below.
Then the following implication holds:

Let $R=2^M r_0$ for some $M\in \mathbb{N}_0$.
Suppose that for every $P\in \P{k}$ we have a solution $\psi^R_P$ to the
equation
\begin{align*}
-\divv a\nabla \psi_P^R = \divv (\chi_{B_R} (\phi_i a-\sigma_i)\nabla \partial_i
P)
\end{align*}
subject to the growth condition
\begin{align*}
r^{-(k-1)}
\left(
\dashint_{B_r} |\nabla \psi_P^R|^2 ~dx
\right)^{1/2}
\leq C_1(d,k,\lambda) ||P||
\sum_{m=0}^{M}
\min\{1,2^m r_0/r\}\varepsilon_{2^m r_0}
\end{align*}
for all $r\geq r_0$,
where $C_1(d,k,\lambda)$ is a sufficiently large constant to be chosen in the
proof below. Assume furthermore that $\psi^R_P$ depends linearly on $P$.

Then for every $P\in \P{k}$ there exists a solution $\psi^{2R}_P$ to
the equation
\begin{align*}
-\divv a\nabla \psi_P^{2R} = \divv (\chi_{B_{2R}} (\phi_i a-\sigma_i)\nabla
\partial_i P)
\end{align*}
subject to the growth condition
\begin{align*}
r^{-(k-1)}
\left(
\dashint_{B_r} |\nabla \psi_P^{2R}|^2 ~dx
\right)^{1/2}
\leq C_1(d,k,\lambda) ||P||
\sum_{m=0}^{M+1}
\min\{1,2^m r_0/r\}\varepsilon_{2^m r_0}
\end{align*}
for all $r\geq r_0$.
Furthermore, $\psi_P^{2R}$ depends linearly on $P$ and we have
\begin{align*}
&r^{-(k-1)}
\left(\dashint_{B_r} |\nabla \psi_P^{2R}-\nabla \psi_P^R|^2
~dx\right)^{1/2}
\leq C_1(d,k,\lambda) ||P||
\varepsilon_{2^{M+1} r_0}.
\end{align*}
\end{lemma}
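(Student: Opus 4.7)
The plan is to mirror Lemma \ref{IterativeConstructionSecondCorrector}, adapted to the $k$th-order setting. The increment $\psi_P^{2R}-\psi_P^R$ is built as the sum of two pieces: (i) an auxiliary function $\xi_P^R$ that absorbs the new annular portion $\chi_{B_{2R}\setminus B_R}(\phi_i a-\sigma_i)\nabla\partial_i P$ of the right-hand side, and (ii) the negative of an $a$-harmonic linear combination of corrected polynomials of orders $\leq k-1$, used to make the increment decay as $r^{k-1}$. Linearity in $P$ is preserved by normalising additive constants appropriately.

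\textbf{Step 1 (applicability of $C^{k-1,1}$).} Verify that Lemma \ref{Ck-11Estimate} is applicable on $B_R$ with $\tilde \psi_P:=\psi_P^R$: the growth hypothesis on $\psi_P^R$ telescopes into $\varepsilon_{\tilde\psi,r_0,R}\leq C_1\varepsilon_{2,r_0}\leq C_1\varepsilon_0$, which is small provided $\varepsilon_0$ is chosen small in terms of $d,k,\lambda$ and (eventually) $C_1$.

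\textbf{Step 2 (annular problem and $C^{k-1,1}$ decomposition).} By Lax--Milgram construct $\xi_P^R$ with square-integrable gradient on $\mathbb{R}^d$ solving
\[
-\divv a\nabla\xi_P^R=\divv\bigl(\chi_{B_{2R}\setminus B_R}(\phi_i a-\sigma_i)\nabla\partial_i P\bigr),
\]
depending linearly on $P$. Homogeneity of $P$ gives $|\nabla\partial_i P|\leq C(d,k)\|P\|R^{k-2}$ on $B_{2R}$; together with uniform ellipticity and the definition of $\varepsilon_{2R}$ this yields
\[
\int_{\mathbb{R}^d}|\nabla\xi_P^R|^2\,dx\leq C(d,k,\lambda)\|P\|^2 R^{2(k-1)+d}\varepsilon_{2R}^2.
\]
Since $\xi_P^R$ is $a$-harmonic in $B_R$, Lemma \ref{Ck-11Estimate} produces polynomials $P_\kappa^R\in\Pa{\kappa}$ ($1\leq\kappa\leq k-1$), linear in $P$, with
\[
\dashint_{B_r}\bigg|\nabla\xi_P^R-\nabla\sum_{\kappa=1}^{k-1}\bigl(P_\kappa^R+\phi_i\partial_i P_\kappa^R+\psi_{P_\kappa^R}\bigr)\bigg|^2dx\leq C\Bigl(\tfrac{r}{R}\Bigr)^{2(k-1)}\dashint_{B_R}|\nabla\xi_P^R|^2\,dx
\]
for $r\in[r_0,R]$, together with the size bound $\sum_\kappa R^{2(\kappa-1)}\|P_\kappa^R\|^2\lesssim\|P\|^2 R^{2(k-1)}\varepsilon_{2R}^2$.

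\textbf{Step 3 (definition, equation, and growth).} Define
\[
\psi_P^{2R}:=\psi_P^R+\xi_P^R-\sum_{\kappa=1}^{k-1}\bigl(P_\kappa^R+\phi_i\partial_i P_\kappa^R+\psi_{P_\kappa^R}\bigr).
\]
The subtracted sum is $a$-harmonic by \eqref{EquationCorrector} (for $\kappa=1$) and by Proposition \ref{ExistenceCorrectorsPolynomialK} combined with the induction hypothesis on Theorem \ref{ExistenceHigherOrderCorrector} (for $2\leq\kappa\leq k-1$), so $\psi_P^{2R}$ solves the desired equation with source on $B_{2R}$. For $r\in[r_0,R]$, Step 2 gives directly $r^{-(k-1)}(\dashint_{B_r}|\nabla\psi_P^{2R}-\nabla\psi_P^R|^2)^{1/2}\leq C\|P\|\varepsilon_{2R}$. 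For $r>R$, combine the global $L^2$-bound on $\nabla\xi_P^R$ with Caccioppoli-type estimates on the subtracted $a$-harmonic pieces: use $\|P_\kappa^R\|\lesssim\|P\|R^{k-\kappa}\varepsilon_{2R}$ from Step 2, the Caccioppoli inequality \eqref{CaccioppoliPhi2} for $\phi$, and the inductive growth of $\psi_{P_\kappa^R}$ from Theorem \ref{ExistenceHigherOrderCorrector} at orders $\leq k-1$. This yields $r^{-(k-1)}(\dashint_{B_r})^{1/2}\leq C\|P\|(R/r)\varepsilon_{2R}$, so both regimes together give the required $\min\{1,2R/r\}\varepsilon_{2R}$ bound. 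Adding to the hypothesised growth of $\psi_P^R$ produces the claimed growth of $\psi_P^{2R}$, with $C_1=C_1(d,k,\lambda)$ fixed at the end to absorb all constants.

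\textbf{Main obstacle.} The delicate step is the far regime $r>R$ in Step 3: the naive average of $|\nabla\xi_P^R|^2$ over $B_r$ only decays as $(R/r)^d$, while the ansatz demands decay $(R/r)$. The sharper factor is recovered only via the subtraction of the lower-order corrected polynomials, which requires both the full inductive growth control of $\psi_{P_\kappa^R}$ for $2\leq\kappa\leq k-1$ (from Theorem \ref{ExistenceHigherOrderCorrector}) and the sharp size bound on $\|P_\kappa^R\|$ (from Lemma \ref{Ck-11Estimate}). The constants must be chosen in the correct order to close the self-referential loop: first fix $C_1$ from Steps 2--3, then choose $\varepsilon_0$ small enough (depending on this $C_1$) for Step 1 to hold.
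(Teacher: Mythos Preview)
Your proposal is correct and follows the paper's argument essentially verbatim: construct $\xi_P^R$ via Lax--Milgram for the annular source, apply Lemma \ref{Ck-11Estimate} to subtract an $a$-harmonic corrected polynomial of order $\leq k-1$, and verify the two regimes $r\leq R$ and $r>R$ separately using exactly the ingredients you list. One small correction to your ``main obstacle'' paragraph: for $r>R$ the $\xi_P^R$-contribution alone already decays like $(R/r)^{k-1+d/2}$, which is \emph{faster} than required; the actual delicacy there is that the subtracted corrected polynomials \emph{grow} (the top-order one like $r^{k-2}$), and the $(R/r)$ factor survives only because their coefficients satisfy $\|P_\kappa^R\|\lesssim \|P\|R^{k-\kappa}\varepsilon_{2R}$---so the subtraction is needed for the \emph{near} regime and must merely be shown harmless in the far one.
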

\begin{proof}
To establish the lemma, we first note that the assumptions of the lemma ensure
that the $C^{k-1,1}$ excess-decay lemma (Lemma \ref{Ck-11Estimate}) is
applicable on $B_R$ with $\tilde \psi_P:=\psi_P^{R}$. To see this, we estimate
for any $r\in [r_0,R]$
\begin{align*}
r^{-(k-1)}\left(
\dashint_{B_r} |\nabla \psi_P^R|^2 ~dx
\right)^{1/2}
\leq C_1(d,k,\lambda) ||P||
\varepsilon_{2,r_0}
\leq C_1(d,k,\lambda) ||P|| \varepsilon_0.
\end{align*}
By choosing $\varepsilon_0>0$ small enough depending only on $d$, $k$,
$\lambda$, and $C_1$ (which is to be chosen at the end of this proof), we can
ensure that the assumption of Lemma~\ref{Ck-11Estimate} regarding smallness of
$\varepsilon_{\tilde \psi,r_0,R}$ is satisfied.

We now turn to the construction of $\psi_P^{2R}-\psi_P^R$ and to that purpose
denote by $\xi_P^R$ the weak solution on $\mathbb{R}^d$ with zero
mean in $B_{2R}$ and square integrable gradient, whose existence and uniqueness
follows by the Lax-Milgram theorem, to the problem
\begin{align*}
-\divv a\nabla \xi_P^R
=&
\divv (\chi_{B_{2R}-B_R} (\phi_i a-\sigma_i)\nabla \partial_i P).
\end{align*}
Obviously, $\xi_P^R$ depends linearly on $P$. Furthermore, by ellipticity we
have the estimate
\begin{align*}
&\int_{\mathbb{R}^d} |\nabla \xi_P^R|^2 ~dx
\\&
\leq C(d,\lambda) \sup_{B_{2R}} |\nabla^2 P|
\left(\int_{\mathbb{R}^d} \chi_{B_{2R}-B_R}(|\phi a|^2+|\sigma|^2) ~dx
\right)^{1/2} \left(\int_{\mathbb{R}^d} |\nabla \xi_P^R|^2 ~dx\right)^{1/2}
\end{align*}
which gives
\begin{align*}
&\left(\int_{\mathbb{R}^d} |\nabla \xi_P^R|^2 ~dx\right)^{1/2}
\leq C(d,\lambda)
\sup_{B_{2R}} |\nabla^2 P|
\left(\int_{B_{2R}} |\phi|^2 + |\sigma|^2 ~dx \right)^{1/2}.
\end{align*}
The last estimate in turn implies
\begin{align}
\label{EstimateXiRP}
\int_{\mathbb{R}^d} |\nabla \xi_P^R|^2 ~dx
\leq C(d,k,\lambda) ||P||^2 R^{2(k-2)} \varepsilon_{2R}^2 R^{2+d}.
\end{align}
We now obtain $\psi_P^{2R}-\psi_P^R$ by modifying $\xi_P^R$ by an $a$-harmonic
function of degree $k-1$.
As $\xi_P^R$ is $a$-harmonic in $B_R$, Lemma \ref{Ck-11Estimate} now
implies the existence of
some $P_{\kappa,P}^R\in \P{\kappa}$ for $1\leq \kappa\leq k-1$ which depend
linearly on $P$ and for which the estimates
\begin{align}
\label{EstimatebRP}
||P_{\kappa,P}^R||^2\leq 
C(d,k,\lambda) R^{-2(\kappa-1)} \dashint_{B_R} |\nabla \xi_P^R|^2 ~dx
\stackrel{(\ref{EstimateXiRP})}{\leq}
C(d,k,\lambda) ||P||^2 R^{2(k-\kappa)} \varepsilon_{2R}^2
\end{align}
and
\begin{align*}
&\dashint_{B_r} \bigg|\nabla \xi_P^R
-\nabla \sum_{\kappa=1}^{k-1}\Big(P_{\kappa,P}^R+\phi_i \partial_i
P_{\kappa,P}^R +\psi_{P_{\kappa,P}^R}\Big)
\bigg|^2 ~dx
\\&
\stackrel{~~~~}{\leq} C(d,k,\lambda)
\left(\frac{r}{R}\right)^{2(k-1)}
\dashint_{B_R} |\nabla \xi_P^R|^2 ~dx
\\&
\stackrel{(\ref{EstimateXiRP})}{\leq}
C(d,k,\lambda) ||P||^2 r^{2(k-1)} \varepsilon_{2R}^2
\end{align*}
hold for all $r\in [r_0,R]$.

Furthermore, we have for $r>R$
\begin{align*}
&\dashint_{B_r} \bigg|\nabla \xi_P^R
-\nabla \sum_{\kappa=1}^{k-1}\Big(P_{\kappa,P}^R+\phi_i \partial_i
P_{\kappa,P}^R +\psi_{P_{\kappa,P}^R}\Big)
\bigg|^2 ~dx
\\&
\stackrel{(\ref{CaccioppoliPhi2},\ref{EstimateGrowthHigherOrderCorrector})}{\leq}
C(d,k,\lambda) \Bigg(r^{-d}\int_{B_r} |\nabla \xi_P^R |^2 ~dx
+||P_{1,P}^R||^2 (1+\varepsilon_{2r}^2)
\\&~~~~~~~~~~~~~~~~~~~~~~~~
+\sum_{\kappa=2}^{k-1} r^{2(\kappa-1)}
||P_{\kappa,P}^R||^2 (1+\varepsilon_{2r}^2+\varepsilon_{2,r}^2)
\Bigg)
\\&
\stackrel{(\ref{EstimateXiRP},\ref{EstimatebRP})}{\leq}
C(d,k,\lambda) ||P||^2 R^{2(k-1)}
\left(\left(\frac{R}{r}\right)^d
+1+\varepsilon_{2r}^2
+(1+\varepsilon_{2r}+\varepsilon_{2,r})\left(\frac{r}{R}\right)^{2(k-2)}
\right)\varepsilon_{2R}^2
\\&
\stackrel{~~~~~~~\,}{\leq} C(d,k,\lambda) ||P||^2 r^{2(k-2)} R^2
\varepsilon_{2R}^2.
\end{align*}
The combination of both $r$-ranges yields
\begin{align}
\label{LastContributionCk-11}
&\frac{1}{r^{k-1}}
\left(\dashint_{B_r} \bigg|\nabla \xi_P^R
-\nabla\sum_{\kappa=1}^{k-1}\Big(P_{\kappa,P}^R+\phi_i \partial_i P_{\kappa,P}^R
+\psi_{P_{\kappa,P}^R}
\Big)
\bigg|^2 ~dx
\right)^{1/2}
\\&
\nonumber
\leq C(d,k,\lambda)  ||P|| \min\{1,2R/r\} \varepsilon_{2R}.
\end{align}
In total, we see that
\begin{align*}
\psi_P^{2R}:=\psi_P^R+\xi_P^R
-\sum_{\kappa=1}^{k-1}\Big(P_{\kappa,P}^R+\phi_i \partial_i P_{\kappa,P}^R
+\psi_{P_{\kappa,P}^R}\Big)
\end{align*}
is the desired function (note in particular that the last term is
$a$-harmonic), provided we choose $C_1$ to be the constant appearing in
\eqref{LastContributionCk-11}.
\end{proof}

We now establish existence of $k$th-order correctors by means of the previous
lemma.
\begin{proof}[Proof of Theorem \ref{ExistenceHigherOrderCorrector}]
We just need to construct an ``initial'' $k$th-order corrector
$\psi_{P}^{r_0}$ subject to the properties of Lemma
\ref{IterativeConstructionkthCorrector}; then Lemma
\ref{IterativeConstructionkthCorrector} yields a sequence
$(\psi_P^{2^m r_0})_m$ which (after subtracting appropriate constants)
is a Cauchy sequence in $H^1(B_R)$ for every $R>0$
due to the last estimate in the lemma and our assumption \eqref{Epsilon2Small}
which implies summability of $\varepsilon_{2^m r_0}$.
Thus, the limit $\psi_P$ satisfies the equation \eqref{EquationPsiP} in the
whole space, depends linearly on $P$, and satisfies the estimate
\begin{align*}
r^{-(k-1)}
\left(
\dashint_{B_r} |\nabla \psi_P|^2 ~dx
\right)^{1/2}
&\leq C_1(d,k,\lambda) ||P||
\sum_{m=0}^{\infty}
\min\{1,2^m r_0/r\} \varepsilon_{2^m r_0}
\\&
\leq C_1(d,k,\lambda) ||P|| \varepsilon_{2,r}
\end{align*}
for any $r\geq r_0$.

To construct $\psi_P^{r_0}$, we use Lax-Milgram to find the (unique) solution
$\psi_P^{r_0}$ on $\mathbb{R}^d$ with square-integrable gradient and zero mean
on $B_{r_0}$ to the equation
\begin{align*}
-\divv a\nabla \psi_P^{r_0} = 
\divv (\chi_{B_{r_0}} (\phi_i a-\sigma_i)\nabla \partial_i
P).
\end{align*}
Obviously, $\psi_P^{r_0}$ depends linearly on $P$.
Furthermore, we have the energy estimate
\begin{align*}
&\int_{\mathbb{R}^d} |\nabla \psi_P^{r_0}|^2 ~dx
\\&
\leq C(d,\lambda) \sup_{B_{r_0}} |\nabla^2 P|
\left(\int_{\mathbb{R}^d}  |\chi_{B_{r_0}} a \phi|^2 + |\chi_{B_{r_0}}\sigma|^2
~dx\right)^{1/2} \left(\int_{\mathbb{R}^d} |\nabla \psi_P^{r_0}|^2
~dx\right)^{1/2}.
\end{align*}
We therefore get
\begin{align*}
&\left(\int_{\mathbb{R}^d} |\nabla \psi_P^{r_0}|^2 ~dx\right)^{1/2}
\leq C(d,\lambda)
\sup_{B_{r_0}} |\nabla^2 P|
\left(\int_{B_{r_0}} |\phi|^2+|\sigma|^2 ~dx\right)^{1/2}.
\end{align*}
This yields in particular for any $r\geq r_0$
\begin{align*}
\int_{B_r} |\nabla \psi_P^{r_0}|^2 ~dx
\leq C(d,k,\lambda) ||P||^2 r_0^{2(k-2)} \int_{B_{r_0}} |\phi|^2+|\sigma|^2 ~dx
\end{align*}
and therefore
\begin{align*}
\dashint_{B_r} |\nabla \psi_P^{r_0}|^2 ~dx
&\leq C(d,k,\lambda) ||P||^2 r^{-d} r_0^{2(k-2)} \varepsilon_{r_0}^2 r_0^{2+d}
\\& 
\leq C(d,k,\lambda) ||P||^2 r^{2(k-1)} \min\{1,(r_0/r)^2\}
\varepsilon_{r_0}^2.
\end{align*}
We note that this provides the starting point for Lemma
\ref{IterativeConstructionkthCorrector}, possibly after enlarging the
constant $C_1$ in the statement thereof.
\end{proof}

\subsection{Proof of the $k$th-order Liouville principle}

Like in the $C^{2,\alpha}$ case, the $C^{k,\alpha}$ Liouville principle (Lemma
\ref{LiouvillePrincipleCkalpha} below) is an easy consequence of our large-scale
excess-decay estimate (Theorem~\ref{CkalphaExcessDecay}). The $k$th-order
Liouville principle (Corollary \ref{LiouvillePrincipleK}) in turn is an easy
consequence of the $C^{k+1,\alpha}$ Liouville principle.

\begin{lemma}
\label{LiouvillePrincipleCkalpha}
Let $d\geq 2$, $k\geq 2$, and suppose that the assumption (\ref{Epsilon2Small})
is satisfied.
Then the following property holds: Any $a$-harmonic function $u$
satisfying the growth condition
\begin{align}
\label{GrowthConditionInkalphaLiouville}
\liminf_{r\rightarrow \infty} \frac{1}{r^{k+\alpha}}
\left(\dashint_{B_r} \left|u\right|^2 ~dx\right)^{1/2}=0
\end{align}
for some $\alpha\in (0,1)$ is of the form
\begin{align*}
u=a+b_i (x_i+\phi_i)+\sum_{\kappa=2}^k (P_\kappa+\phi_i \partial_i
P_\kappa+\psi_{P_\kappa})
\end{align*}
with some $a\in \mathbb{R}$, $b\in \mathbb{R}^d$, and $P_\kappa\in
\Pa{\kappa}$ for $2\leq \kappa\leq k$ (i.e. $P_\kappa$ is a homogeneous
$a_{hom}$-harmonic polynomial of degree $\kappa$). Here, the $\psi_{P}$ denote
the higher-order correctors whose existence is guaranteed by Theorem
\ref{ExistenceHigherOrderCorrector}.
\end{lemma}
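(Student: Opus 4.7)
The plan is to adapt verbatim the proof of Corollary~\ref{LiouvillePrinciple} from the $C^{2,\alpha}$ case, substituting second-order quantities by $k$th-order ones. All necessary ingredients --- the $C^{k,\alpha}$ excess-decay estimate (Theorem~\ref{CkalphaExcessDecay}), the existence of correctors $\psi_{P_\kappa}$ (Theorem~\ref{ExistenceHigherOrderCorrector}), and the comparison of minimizers (Lemma~\ref{CkalphaExcessDecayGeneral}, in particular \eqref{EstimatePbDiff}) --- are already in place.

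First I would invoke the Caccioppoli inequality for the $a$-harmonic function $u$,
\begin{align*}
\left(\dashint_{B_R} |\nabla u|^2 \, dx\right)^{1/2} \leq \frac{C}{R} \left(\dashint_{B_{2R}} |u|^2 \, dx\right)^{1/2},
\end{align*}
to translate the growth assumption \eqref{GrowthConditionInkalphaLiouville} into
\begin{align*}
\liminf_{R\to\infty} \frac{1}{R^{k-1+\alpha}} \left(\dashint_{B_R} |\nabla u|^2 \, dx\right)^{1/2} = 0.
\end{align*}
Next, fix any $r \geq r_0$. For all $R > r$, Theorem~\ref{CkalphaExcessDecay} combined with the trivial bound $\Exc_k(R) \leq \dashint_{B_R} |\nabla u|^2 \, dx$ gives
\begin{align*}
\Exc_k(r) \leq C(d,k,\lambda,\alpha) \, r^{2(k-1)+2\alpha} \left(\frac{1}{R^{k-1+\alpha}} \left(\dashint_{B_R} |\nabla u|^2 \, dx\right)^{1/2}\right)^2.
\end{align*}
Passing to the liminf along a sequence $R \to \infty$ realizing the previous limit forces $\Exc_k(r) = 0$ for every $r \geq r_0$. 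Since the infimum in $\Exc_k(r)$ is attained, there exist $P_\kappa^r \in \Pa{\kappa}$ ($1 \leq \kappa \leq k$) such that
\begin{align*}
\nabla u = \nabla \sum_{\kappa=1}^k \bigl(P_\kappa^r + \phi_i \partial_i P_\kappa^r + \psi_{P_\kappa^r}\bigr) \quad \text{on } B_r.
\end{align*}

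Finally, I would show that the $P_\kappa^r$ are in fact independent of $r$ for $r \geq r_0$. Given $r_0 \leq r_1 < r_2$, applying \eqref{EstimatePbDiff} of Lemma~\ref{CkalphaExcessDecayGeneral} with $r_1$ in place of $r$ and $r_2$ in place of $R$ yields $\sum_{\kappa=1}^k r_2^{2(\kappa-1)} \|P_\kappa^{r_1} - P_\kappa^{r_2}\|^2 \leq C(d,k,\lambda,\alpha) \Exc_k(r_2) = 0$, so $P_\kappa^{r_1} = P_\kappa^{r_2}$ for every $\kappa$. Denoting the common polynomials by $P_\kappa$, writing $P_1 = b_i x_i$ for some $b \in \mathbb{R}^d$, and integrating the gradient identity, we obtain
\begin{align*}
u = a + b_i(x_i+\phi_i) + \sum_{\kappa=2}^k \bigl(P_\kappa + \phi_i \partial_i P_\kappa + \psi_{P_\kappa}\bigr)
\end{align*}
for some constant $a \in \mathbb{R}$, as desired.

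The argument is essentially free of serious obstacles; the main conceptual work is in the preceding lemmas. The only point requiring a bit of care is that one must pick $r_0$ large enough to satisfy $\varepsilon_{2,r_0} \leq \varepsilon_0(d,k,\lambda,\alpha)$ so that both Theorem~\ref{CkalphaExcessDecay} and Lemma~\ref{CkalphaExcessDecayGeneral} (with $\tilde\psi_{P_k}:=\psi_{P_k}$) are applicable; this is possible by \eqref{Epsilon2Small}.
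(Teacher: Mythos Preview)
Your proposal is correct and follows essentially the same approach as the paper's own proof: Caccioppoli to pass from $u$ to $\nabla u$, then Theorem~\ref{CkalphaExcessDecay} to force $\Exc_k(r)=0$ for all $r\geq r_0$, then Lemma~\ref{CkalphaExcessDecayGeneral} (specifically \eqref{EstimatePbDiff}) to conclude that the minimizing polynomials are independent of $r$. The only cosmetic difference is that the paper separates out the linear part as $b_i(x_i+\phi_i)$ from the start rather than writing $P_1=b_ix_i$ at the end.
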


\begin{proof}[Proof of Corollary \ref{LiouvillePrincipleK}]
Obviously, \eqref{GrowthConditionInkLiouville} entails
\eqref{GrowthConditionInkalphaLiouville} with $k+1$ in place of $k$ and e.g.
$\alpha:=\frac{1}{2}$. By Lemma \ref{LiouvillePrincipleCkalpha}, any
$a$-harmonic function $u$ subject to condition
\eqref{GrowthConditionInkLiouville} must be of the form
\begin{align}
\label{RepresentationULiouville}
u=a+b_i (x_i+\phi_i)+\sum_{\kappa=2}^{k+1} (P_\kappa+\phi_i \partial_i
P_\kappa+\psi_{P_\kappa})
\end{align}
with some $a\in \mathbb{R}$, $b\in \mathbb{R}^d$, and $P_\kappa\in
\Pa{\kappa}$ for $2\leq \kappa\leq k+1$. Our stronger growth condition
\eqref{GrowthConditionInkLiouville} however shows that we have
$P_{k+1}\equiv 0$: Since the $\phi_i$ grow sublinearly, cf.\
\eqref{EpsilonSmall}, and since $\psi_{P_{k+1}}$ grows slower than a polynomial
of degree $k+1$, cf.\ \eqref{EstimateGrowthHigherOrderCorrector}, we see that for
large $|x|$ the term $P_{k+1}$ would be the dominating term in
\eqref{RepresentationULiouville} if it were nonzero, contradicting our growth
condition \eqref{GrowthConditionInkLiouville}.
\end{proof}

\begin{proof}[Proof of Lemma \ref{LiouvillePrincipleCkalpha}]
Let $\alpha\in (0,1)$ be such that
\begin{align*}
\liminf_{R\rightarrow \infty} \frac{1}{R^{k+\alpha}}
\left(\dashint_{B_R} \left|u\right|^2 ~dx\right)^{1/2}=0
\end{align*}
holds. By the Caccioppoli estimate, we deduce
\begin{align*}
\liminf_{R\rightarrow \infty} \frac{1}{R^{k-1+\alpha}}
\left(\dashint_{B_R} \left|\nabla u\right|^2 ~dx\right)^{1/2}=0.
\end{align*}
Fix $r\geq r_0$. The excess-decay estimate from Theorem \ref{CkalphaExcessDecay}
together with the trivial bound $\Exc_k(R)\leq \dashint_{B_R} |\nabla
u|^2 ~dx$ yields
\begin{align*}
\Exc_k(r)&\leq C(d,k,\lambda,\alpha)\left(\frac{r}{R}\right)^{2(k-1)+2\alpha}
\Exc_k(R)
\\&
\leq C(d,k,\lambda,\alpha) r^{2(k-1)+2\alpha} \left(\frac{1}{R^{k-1+\alpha}}
\left(\dashint_{B_R} |\nabla u|^2 ~dx\right)^{1/2}\right)^2.
\end{align*}
Passing to the lim inf $R\rightarrow \infty$, we deduce that
\begin{align*}
\Exc_k(r)=0
\end{align*}
holds for every $r\geq r_0$. Therefore, on every $B_r$ with $r\geq r_0$, $\nabla
u$ can be represented \emph{exactly} as the derivative of a corrected polynomial
of $k$th order (since the infimum in the definition of $\Exc_k$ is actually
attained, as noted at the beginning of the proof of Lemma
\ref{ExcessDecayLemmak}), i.e.
we have
\begin{align*}
\nabla u=\nabla b_i^r (x_i+\phi_i) + \nabla
\sum_{\kappa=2}^k (P_\kappa^r+\phi_i \partial_i
P_\kappa^r+\psi_{P_\kappa^r})
\end{align*}
in $B_r$ for some $b^r\in \mathbb{R}^d$ and some $P_\kappa^r\in \Pa{\kappa}$
($2\leq \kappa\leq k$); recall that we have used the convention $\psi_P\equiv
0$ for linear polynomials $P$.
It is not difficult to show that for $r$ large enough, the $b^r$ and
$P_\kappa^r$ are actually independent of $r$ and define some common $b\in
\mathbb{R}^d$ and $P_\kappa\in \Pa{\kappa}$: For example, one may use Lemma
\ref{CkalphaExcessDecayGeneral} to compare the $b^r$, $P_\kappa^r$ for two
different radii $r_1,r_2\geq r_0$; the estimate for $|b^{r_1}-b^{r_2}|$ and
$||P_\kappa^{r_1}-P_\kappa^{r_2}||$ then contains the factor
$\Exc_k(\max(r_1,r_2))$ and is therefore zero.
Moreover, the gradient $\nabla u$ determines the function $u$ itself up to a
constant, i.e. we have
\begin{align*}
u=a + b_i (x_i+\phi_i) +\sum_{\kappa=2}^k (P_\kappa+\phi_i \partial_i
P_\kappa+\psi_{P_\kappa})
\end{align*}
for some $a\in \mathbb{R}$, $b\in \mathbb{R}^d$, and $P_\kappa\in \Pa{\kappa}$
($2\leq \kappa\leq k$).
\end{proof}


\appendix

\section{Approximation of $a$-Harmonic Functions by Corrected $a_{hom}$-Harmonic
Functions}

Our proofs make use of the following lemma, which is implicitly derived in the
course of the proof of Lemma 2 in \cite{GloriaNeukammOtto}. For the reader's
convenience, we recall its proof here.

The lemma essentially states that an $a$-harmonic function $u$ on a ball $B_R$
may be approximated on the ball $B_{R/2}$ up to a small error (of order
$\varepsilon^{1/(d+1)^2}_R$) by an appropriate $a_{hom}$-harmonic
function $u_{hom}$ and correcting this function $u_{hom}$ using the first-order
corrector $\phi_i$.

The purpose of the lemma is the same as in classical elliptic regularity theory:
The function $u_{hom}$ satisfies an elliptic equation with constant
coefficients, i.e. it is smooth and good estimates for its higher derivatives
are available. In our proof above, we show by means of the present lemma that
this high regularity of $u_{hom}$ transfers (in an appropriate sense)
to $u$ itself.
\begin{lemma}
\label{ApproximationConstantCoefficient}
Let $R>0$ and let $u$ be $a$-harmonic on $B_R$. Suppose that
$\varepsilon_R\leq 1$ (with $\varepsilon_R$ as defined in
\eqref{DefinitionEpsilon}). Then there exists an $a_{hom}$-harmonic function
$u_{hom}$ on $B_{R/2}$ satisfying the following two properties: First, we have
the energy estimate
\begin{align}
\label{EnergyEstimateuhom}
\dashint_{B_{R/2}} |\nabla u_{hom}|^2 ~dx
\leq C(d,\lambda)\dashint_{B_R} |\nabla u|^2 ~dx.
\end{align}
Second, the ``corrected'' function $u_{hom}+\phi_i \partial_i u_{hom}$ is a good
approximation for $u$ in the sense that
\begin{align*}
\dashint_{B_{R/2}} |\nabla u-\nabla (u_{hom}+\phi_i \partial_i u_{hom})|^2 ~dx
\leq
C(d,\lambda)\varepsilon_{R}^{2/(d+1)^2}
\dashint_{B_R} |\nabla u|^2 ~dx.
\end{align*}
\end{lemma}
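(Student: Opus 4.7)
The plan is the standard homogenization argument: compare $u$ to an $a_{hom}$-harmonic function $u_{hom}$ via the two-scale expansion $u \approx u_{hom} + \phi_i\partial_i u_{hom}$, and exploit the corrector pair $(\phi,\sigma)$ to rewrite the homogenization error in divergence form. Concretely, I would take $u_{hom}$ to be the $a_{hom}$-harmonic function on some $B_\rho$ with $\rho$ chosen in $[R/2, 3R/4]$ (the specific value selected at the end via a standard averaging argument so as to control a boundary layer), with Dirichlet boundary data equal to $u$ on $\partial B_\rho$. Then $u$ is admissible in the Dirichlet variational problem solved by $u_{hom}$, so uniform ellipticity of $a_{hom}$ immediately yields $\int_{B_\rho}|\nabla u_{hom}|^2 \lesssim \int_{B_\rho}|\nabla u|^2$, which gives \eqref{EnergyEstimateuhom}.

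The central algebraic identity, completely analogous to the proof of Proposition~\ref{ExistenceCorrectorsPolynomial}, is
\[
-\nabla\cdot a\nabla\bigl(u_{hom}+\phi_i\partial_iu_{hom}\bigr)
\;=\; \nabla\cdot\bigl[(\sigma_i-a\phi_i)\nabla\partial_i u_{hom}\bigr],
\]
which follows from \eqref{EquationCorrector}, \eqref{Equationq}, \eqref{EquationSigma}, $a_{hom}$-harmonicity of $u_{hom}$, and the antisymmetry of $\sigma_{ijk}$ in its last two indices (the antisymmetry converts the term $q_{ij}\partial_i\partial_j u_{hom}$ into a total divergence without any pointwise bound on $q$). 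Setting $w:=u-(u_{hom}+\phi_i\partial_iu_{hom})$, testing the resulting equation against $\eta^2 w$ for a cutoff $\eta$ with $\eta\equiv 1$ on $B_{R/2}$, $\eta\equiv 0$ outside $B_\rho$, and $|\nabla\eta|\lesssim 1/R$, and using the trivial bound $\int_{B_R}|\phi|^2+|\sigma|^2 \lesssim R^{d+2}\varepsilon_R^2$, produces a Caccioppoli-type inequality whose right-hand side is essentially $\int_{B_\rho}(|\phi|^2+|\sigma|^2)|\nabla^2 u_{hom}|^2\,dx$ plus a $w$-term absorbable after a standard Poincar\'e bound.

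The real difficulty, and where the fractional power $2/(d+1)^2$ is born, is that $|\nabla^2 u_{hom}|$ is not uniformly bounded on $B_\rho$: interior elliptic regularity only furnishes an $L^\infty$ bound away from $\partial B_\rho$, and a naive factoring of $\|\nabla^2 u_{hom}\|_{L^\infty}^2$ out of the corrector integral would diverge. The plan is to invoke Meyers' higher integrability: for some $\delta=\delta(d,\lambda)>0$ one has $\nabla u\in L^{2+\delta}_{\rm loc}(B_R)$ with quantitative reverse-H\"older estimates, hence via the Dirichlet comparison also $\nabla u_{hom}\in L^{2+\delta}(B_\rho)$, and interior Calder\'on--Zygmund then yields $\nabla^2 u_{hom}\in L^{2+\delta}$ on concentric subdomains with explicit distance-to-boundary scaling. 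H\"older's inequality splits the offending integral into an interior contribution (bounded by the $L^\infty$ estimate on a slightly smaller ball) and a boundary-layer contribution of thickness $\ell$ controlled via the Meyers exponent; optimizing $\ell$ in a scaling argument in dimension $d$, together with averaging over $\rho$ to dodge degeneracies in the choice of boundary radius, produces exactly $\varepsilon_R^{2/(d+1)^2}$. The exponent $(d+1)^2$ reflects the double loss in this optimization --- once through the dependence of the Meyers exponent on the dimension and once through the $d$-dimensional scaling of the boundary layer --- and tracking this optimization is the main obstacle of the proof.
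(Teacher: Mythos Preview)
Your overall architecture---Dirichlet comparison to $a_{hom}$, the $(\phi,\sigma)$-based divergence-form identity for the two-scale error, and a boundary-layer optimization---matches the paper's. Two points diverge, and the second is a genuine gap.

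\textbf{Cutoff placement.} The paper does not test with $\eta^2 w$. It builds the cutoff into the ansatz, $v:=u-u_{hom}-\eta\,\phi_i\partial_iu_{hom}$, on a ball $B_{R'}$ with $R'\in[\tfrac34 R,R]$ selected by averaging so that $R'\int_{\partial B_{R'}}|\nabla u|^2\,dS\leq C(d)\int_{B_R}|\nabla u|^2\,dx$. Then $v\equiv0$ on $\partial B_{R'}$ and one tests the equation for $v$ against $v$ itself, obtaining a clean energy estimate with no unabsorbed lower-order term. Your scheme produces a cross term $\int|\nabla\eta|^2w^2$, and your ``absorbable after a standard Poincar\'e bound'' is not justified: on $\partial B_\rho$ one has $w=-\phi_i\partial_iu_{hom}$, and $\nabla u_{hom}$ is not pointwise controlled near the boundary.

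\textbf{Meyers does not give the stated exponent.} Meyers' estimate yields $\nabla u\in L^{2+\delta}$ with $\delta=\delta(d,\lambda)$; any optimization routed through it produces a $\lambda$-dependent power of $\varepsilon_R$, not the explicit $2/(d+1)^2$. Your attribution of $(d+1)^2$ to ``the dependence of the Meyers exponent on the dimension'' is incorrect. The paper instead exploits that $u_{hom}$ satisfies a \emph{constant-coefficient} equation: the choice of $R'$ gives $\nabla^{tan}u_{hom}=\nabla^{tan}u\in L^2(\partial B_{R'})$, and an $H^1$ extension plus Calder\'on--Zygmund yield $\nabla u_{hom}\in L^p(B_{R'})$ with the dimension-determined $p=2d/(d-1)$. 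The equation for $v$ then has two right-hand-side pieces: a boundary-layer term $(1-\eta)(a-a_{hom})\nabla u_{hom}$ supported in a shell of thickness $\rho$, controlled via H\"older and the $L^p$ bound to give $(\rho/R')^{1/d}$; and an interior term $(\phi_ia-\sigma_i)\nabla(\eta\partial_iu_{hom})$, controlled by the $L^\infty$ interior bound on $\nabla u_{hom},\nabla^2 u_{hom}$ at distance $\rho/2$ from the boundary together with $\int_{B_{R'}}(|\phi|^2+|\sigma|^2)\leq C\varepsilon_R^2(R')^{d+2}$, giving $\varepsilon_R^2(R'/\rho)^{d+2}$. Balancing $(\rho/R')^{1/d}$ against $\varepsilon_R^2(R'/\rho)^{d+2}$ gives $\rho/R'\sim\varepsilon_R^{2d/(d+1)^2}$ and hence the exponent $2/(d+1)^2$.
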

\begin{proof}
Choose some $R'\in [\frac{3}{4}R,R]$ for which
\begin{align}
\label{AssumptionBoundary}
R'\dashint_{\partial B_{R'}} |\nabla u|^2 ~dS
\leq C(d) \dashint_{B_R} |\nabla u|^2 ~dx
\end{align}
holds. Let $u_{hom}$ be the $a_{hom}$-harmonic function in $B_{R'}$
which coincides with $u$ on $\partial B_{R'}$. Testing the equation $-\divv
a_{hom} \nabla u_{hom}=0$ with $u_{hom}-u$ (note that this test function is
admissible since we have $u_{hom}-u=0$ on $\partial B_{R'}$), we infer by
ellipticity of $a$ and (in the second step) Young's inequality
\begin{align}
\nonumber
\dashint_{B_{R'}} |\nabla u_{hom}|^2 ~dx
&\leq C(\lambda) \dashint_{B_{R'}} |\nabla u| |\nabla u_{hom}| ~dx
\\&
\label{EnergyEstimateuhom2}
\leq \frac{1}{2}\dashint_{B_{R'}} |\nabla u_{hom}|^2 ~dx
+C(\lambda) \dashint_{B_{R'}} |\nabla u|^2 ~dx,
\end{align}
which because of $R/2\leq R'\leq R$ gives the desired energy estimate.
It remains to establish the approximation property of $u_{hom}+\phi_i \partial_i
u_{hom}$.

Denote by
$\eta_0:\mathbb{R}\rightarrow \mathbb{R}$ a smooth function with $\eta_0(s)=1$
for $s\geq 1$ and $\eta_0(s)=0$ for $s\leq 0$. Let $0<\rho<R/4$ and set
$\eta(x):=\eta_0(2(R'-\rho/2-|x|)/\rho)$. Note that we have $|\nabla \eta|\leq
C(d)/\rho$ as well as $\eta\equiv 0$ outside of $B_{R'-\rho/2}$ and $\eta \equiv
1$ in $B_{R'-\rho}$. Due to $\rho\leq R/4$, we also have $R'-\rho\geq R/2$. We
will optimize in this ``boundary layer thickness'' $\rho$ at the end of the
proof.

Let us abbreviate
\begin{align*}
v:=u-u_{hom}-\eta \phi_i \partial_i u_{hom}.
\end{align*}
where the purpose of $\eta$ is to have $v\equiv 0$ on $\partial B_{R'}$.
The desired approximation
property of $u_{hom}+\phi_i \partial_i u_{hom}$ as stated in the lemma will be a
consequence of an appropriate energy estimate for $v$ (recall that we have
$\eta\equiv 1$ in $B_{R/2}$ since $\rho<R/4$ and $R'>3R/4$).

To derive this energy estimate, we would like to show that $v$ is approximately
$a$-harmonic. We first compute using the fact that $u$ and $x_i+\phi_i$ are
$a$-harmonic (cf. \eqref{EquationCorrector})
\begin{align*}
&-\divv a \nabla v
\\
&=-\divv a \nabla u + \divv (1-\eta) a\nabla u_{hom}
+\divv a (e_i+\nabla \phi_i) \eta \partial_i u_{hom}
+\divv \phi_i a \nabla (\eta \partial_i u_{hom})
\\
&\stackrel{\eqref{EquationCorrector}}{=}
\divv (1-\eta) a \nabla u_{hom}
+a (e_i+\nabla \phi_i) \cdot \nabla (\eta \partial_i u_{hom})
+\divv \phi_i a \nabla (\eta \partial_i u_{hom})
\\
&=
\divv (1-\eta) (a-a_{hom}) \nabla u_{hom}
+(a(e_i+\nabla \phi_i)-a_{hom}e_i) \cdot \nabla (\eta \partial_i u_{hom})
\\&~~~
+\divv \phi_i a \nabla (\eta \partial_i u_{hom}),
\end{align*}
where in the last step we have used the $a_{hom}$-harmonicity of $u_{hom}$ in
form of the equality $-\divv (1-\eta)a_{hom}\nabla
u_{hom}-a_{hom}e_i \cdot \nabla(\eta\partial_i u_{hom})=0$. Taking into account
the formula $a(e_i+\nabla\phi_i)-a_{hom}e_i=\divv \sigma_i$ (cf.
\eqref{Equationq},\eqref{EquationSigma}) and the fact that
\begin{align*}
(\nabla \cdot \sigma_i)\cdot \nabla w
=\partial_k \sigma_{ijk} \partial_j w 
=\partial_k (\sigma_{ijk}\partial_j w)
=-\partial_k (\sigma_{ikj}\partial_j w)
=-\divv (\sigma_i \nabla w)
\end{align*}
holds for any function $w$ by skew-symmetry of $\sigma_i$, we may rewrite the
right-hand side in divergence form:
\begin{align*}
&-\divv a \nabla v=
\divv (1-\eta) (a-a_{hom}) \nabla u_{hom}
+\divv (\phi_i a-\sigma_i) \nabla (\eta \partial_i u_{hom}).
\end{align*}
Testing the weak formulation of this equation with $v$ (recall that $v\equiv 0$
on $\partial B_{R'}$) and using the ellipticity of $a$, we deduce using Young's
inequality and the properties of $\eta$
\begin{align*}
&\int_{B_{R'}} |\nabla v|^2 ~dx
\\&
\leq C(\lambda) \int_{B_{R'}} |(1-\eta)(a-a_{hom})\nabla u_{hom}|^2+|\phi_i a -
\sigma_i|^2 |\nabla (\eta \partial_i u_{hom})|^2 ~dx
\\&
\leq C(d,\lambda) \int_{B_{R'}} |1-\eta|^2 |\nabla u_{hom}|^2 ~dx
\\&~~~
+C(d,\lambda) \int_{B_{R'}} (|\phi|^2+|\sigma|^2)
(|\nabla \eta|^2 |\nabla u_{hom}|^2 + \eta^2 |\nabla^2 u_{hom}|^2) ~dx
\\&
\leq C(d,\lambda) \int_{B_{R'}-B_{R'-\rho}} |\nabla u_{hom}|^2 ~dx
\\&~~~
+C(d,\lambda)
\sup_{B_{R'-\rho/2}} \left(\frac{1}{\rho^2}|\nabla u_{hom}|^2+|\nabla^2
u_{hom}|^2\right) \int_{B_{R'}} |\phi|^2+|\sigma|^2 ~dx.
\end{align*}
Since our function $u_{hom}$ is $a_{hom}$-harmonic, we have the regularity
estimates
\begin{align*}
\sup_{B_{R'-\rho/2}}
\left(\frac{1}{\rho^2}|\nabla u_{hom}|^2+|\nabla^2 u_{hom}|^2\right)
&\leq \frac{C(d,\lambda)}{\rho^2}
\sup_{y\in B_{R'-\rho/2}} \dashint_{B_{\rho/2}(y)} |\nabla u_{hom}|^2 ~dx,
\\
\left(\int_{B_{R'}} |\nabla u_{hom}|^p ~dx\right)^{2/p}
&\leq C(d,\lambda)
\int_{\partial B_{R'}} |\nabla^{tan} u_{hom}|^2 ~dS,
\end{align*}
where $p:=2d/(d-1)$: The first estimate is a standard constant-coefficient
interior regularity estimate (which is a consequence e.g. of an iterative
application of Theorem 4.9 in \cite{GiaquintaMartinazzi} and the Sobolev
embedding).
The second estimate follows by combining
1) the existence of an extension $\bar
u$ of $u_{hom}$ subject to the estimate $||\nabla\bar u||_{L^p(B_{R'})}\leq
C(d)||\nabla^{tan}u_{hom}||_{L^2(\partial B_{R'})}$ and 2) the Calderon-Zygmund
estimate on $B_{R'}$, which reads $||\nabla w||_{L^p(B_{R'})}\leq C(d,\lambda)
||\nabla \bar u||_{L^p(B_{R'})}$ for any solution $w\in H^1(B_{R'})$ with
$w-\bar u\in H^1_0(B_{R'})$ to the equation $-\divv a_{hom}\nabla w=0$. For the
latter estimate, see Theorem 7.1 in \cite{GiaquintaMartinazzi}.

Using these regularity estimates, the equality $\nabla^{tan}u_{hom}=\nabla^{tan}
u$ on $\partial B_R$, as well as the obvious inequality
\begin{align*}
\sup_{y\in B_{R'-\rho/2}} \dashint_{B_{\rho/2}(y)} |\nabla u_{hom}|^2 ~dx
&\leq
\left(\frac{2R'}{\rho}\right)^d \dashint_{B_{R'}} |\nabla u_{hom}|^2 ~dx,
\end{align*}
we infer by $\rho\leq R'/4$ and $3R/4\leq R'\leq R$
\begin{align*}
\int_{B_{R'}} |\nabla v|^2 ~dx
&\leq C(d,\lambda) |B_{R'}-B_{R'-\rho}|^{1-2/p}
\left(\int_{B_{R'}-B_{R'-\rho}} |\nabla u_{hom}|^p ~dx\right)^{2/p}
\\&~~~
+C(d,\lambda) \frac{1}{{R'}^2} \left(\frac{R'}{\rho}\right)^{d+2}
\dashint_{B_{R'}} |\nabla u_{hom}|^2 ~dx
\cdot (R')^d\dashint_{B_{R'}} |\phi|^2+|\sigma|^2 ~dx
\\
&\stackrel{\eqref{EnergyEstimateuhom2}}{\leq}
C(d,\lambda) \rho^{1/d} {R'}^{(d-1)/d}
\int_{\partial B_{R'}} |\nabla^{tan} u|^2 ~dS
\\&~~~
+C(d,\lambda) \varepsilon_{R}^2 \left(\frac{R'}{\rho}\right)^{d+2}
\int_{B_{R'}} |\nabla u|^2 ~dx
\\&
\stackrel{\eqref{AssumptionBoundary}}{\leq}
C(d,\lambda) \left(\frac{\rho}{R'}\right)^{1/d}
\int_{B_R} |\nabla u|^2 ~dx
\\&~~~
+C(d,\lambda) \varepsilon_{R}^2 \left(\frac{R'}{\rho}\right)^{d+2}
\int_{B_{R'}} |\nabla u|^2 ~dx.
\end{align*}
We optimize in $\rho$ by choosing
$\rho:=\frac{1}{4}\varepsilon_{R}^{2d/(d+1)^2}R'$ (which thanks to the
assumption $\varepsilon_R\leq 1$ is admissible in the sense of $\rho\leq
\frac{1}{4}R'$). This yields
\begin{align*}
\int_{B_{R'}} |\nabla v|^2 ~dx
\leq C(d,\lambda) \varepsilon_{R}^{2/(d+1)^2}
\left(\int_{B_{R'}} |\nabla u|^2 ~dx
+\int_{B_{R}} |\nabla u|^2 ~dx\right)
\end{align*}
which together with
the estimate $3R/4\leq R'\leq R$ and $\eta\equiv 1$ in $B_{R/2}$ proves the
desired approximation result.
\end{proof}

\section{Failure of Liouville principle for smooth uniformly elliptic
coefficient fields}

We now provide the argument that smoothness of a uniformly elliptic coefficient
field does not prevent Liouville's theorem from failing: Even for smooth
uniformly elliptic coefficient fields, sublinearly growing harmonic functions
are not necessarily constant, implying a failure even of the zero-th order
Liouville theorem.
\begin{proposition}
\label{SmoothLiouvilleCounterexample}
For any $\alpha\in (0,1)$ there exists a smooth, bounded, and uniformly elliptic
symmetric coefficient field $a$ on $\mathbb{R}^2$ such that the following holds:
There exists a smooth function $u$ which is $a$-harmonic and satisfies
\begin{equation}\label{a.1}
\Big(\dashint_{B_R}u^2~dx\Big)^\frac{1}{2}\sim R^\alpha\quad\mbox{for}\;R\gg 1.
\end{equation}
\end{proposition}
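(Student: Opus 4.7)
The plan is the classical two-dimensional construction, in the spirit of Piccinini-Spagnolo: take a $0$-homogeneous coefficient field that admits an explicit $r^\alpha$-type solution, smooth it out near the origin, and correct by a decaying perturbation.

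First I would define the reference field $\tilde a(x) := \hat r \hat r^T + \alpha^2 \hat\theta \hat\theta^T$, where $\hat r = (\cos\theta,\sin\theta)$ and $\hat\theta = (-\sin\theta,\cos\theta)$ are the polar unit vectors. In Cartesian form $\tilde a$ depends smoothly on $\theta$ only, hence is smooth on $\mathbb{R}^2 \setminus \{0\}$, bounded, symmetric, and uniformly elliptic with constant $\alpha^2$. Since $\hat r,\hat\theta$ diagonalize $\tilde a$ with eigenvalues $1,\alpha^2$, a direct polar-coordinates calculation shows that $\tilde u(r,\theta) := r^\alpha \cos\theta$ satisfies $-\nabla\cdot(\tilde a\nabla\tilde u) = (\alpha^2-\alpha^2) r^{\alpha-2}\cos\theta = 0$ on $\mathbb{R}^2 \setminus \{0\}$: the radial and angular contributions cancel precisely because of the angular eigenvalue $\alpha^2$.

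Second, I would smooth $\tilde a$ near the origin. Fix cutoffs $\eta, \chi \in C^\infty(\mathbb{R}^2;[0,1])$ with $\eta \equiv 0$ on $B_1$, $\eta \equiv 1$ outside $B_2$, $\chi \equiv 0$ on $B_2$, $\chi \equiv 1$ outside $B_3$; set $a := \eta\tilde a + (1-\eta) I$ and $w := \chi \tilde u$. Then $a$ is a smooth, symmetric, bounded, uniformly-elliptic (with constant $\alpha^2$) coefficient field on $\mathbb{R}^2$ agreeing with $\tilde a$ outside $B_2$, and $w$ is smooth on all of $\mathbb{R}^2$ and agrees with $\tilde u$ outside $B_3$. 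Consequently $g := -\nabla\cdot(a\nabla w)$ is smooth and compactly supported in $B_3 \setminus B_2$; the divergence theorem together with $a\nabla w = \tilde a\nabla\tilde u$ outside $B_3$ yields $\int_{\mathbb{R}^2} g\,dx = -\alpha\lim_{R\to\infty} R^\alpha\int_0^{2\pi}\cos\theta\,d\theta = 0$.

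Third, I would construct a bounded corrector $v$ solving $-\nabla\cdot(a\nabla v) = g$ on $\mathbb{R}^2$. Since $g\in C_c^\infty(\mathbb{R}^2)$ has vanishing mean, Bogovskii's construction gives $F\in C_c^\infty(\mathbb{R}^2;\mathbb{R}^2)$ with $g = \nabla\cdot F$; Lax-Milgram on $\dot H^1(\mathbb{R}^2)/\mathbb{R}$ then produces a unique $v$ modulo constants with $\nabla v \in L^2$. After fixing the additive constant, the mean-zero property of $g$ kills the leading $\log|x|$ term in the Green's function representation $v(x) = \int G^a(x,y)\,g(y)\,dy$, where $G^a$ is the Green's function of $-\nabla\cdot(a\nabla\cdot)$ in the plane, giving $v(x)\to 0$ as $|x|\to\infty$; in particular $v$ is bounded. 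Then $u := w+v$ is smooth and $a$-harmonic on $\mathbb{R}^2$, and the boundedness of $v$ together with $w = \tilde u$ outside $B_3$ produces $(\dashint_{B_R} u^2\,dx)^{1/2} = (\dashint_{B_R} \tilde u^2\,dx)^{1/2} + O(1) \sim c_\alpha R^\alpha$ for $R\gg 1$, establishing \eqref{a.1}. The main obstacle is this last step: generically a compactly supported forcing on $\mathbb{R}^2$ generates a potential growing like $\log|x|$, and only the vanishing mean of $g$ (reflecting $\int_0^{2\pi}\cos\theta\,d\theta = 0$) suppresses this logarithm; the boundedness of $v$ can be made rigorous either via pointwise Green's function estimates for divergence-form operators in the plane, or by solving Dirichlet problems on $B_R$ with zero boundary data and passing to the limit $R\to\infty$ with uniform $H^1$ bounds.
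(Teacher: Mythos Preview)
Your approach is essentially the paper's: start from the explicit Piccinini--Spagnolo field, smooth it near the origin, and correct by a perturbation with $L^2$ gradient obtained via Lax--Milgram. The only substantive difference is the final step. You aim to show the correction $v$ is \emph{bounded}, invoking the vanishing mean of $g$ to kill the logarithm in a Green's function representation; this is plausible but your ``route 2'' (limits of Dirichlet problems with uniform $H^1$ bounds) only reproduces $\nabla v\in L^2$, not boundedness, and ``route 1'' requires nontrivial two-point Green's function asymptotics for variable-coefficient operators. The paper sidesteps this entirely: once $\nabla v\in L^2(\mathbb{R}^2)$, Poincar\'e on dyadic balls gives $(\dashint_{B_{2^n}}(v-\dashint_{B_{2^n}}v)^2)^{1/2}\le C$, and telescoping the means yields $(\dashint_{B_R}v^2)^{1/2}\le C\log R=o(R^\alpha)$, which is all that \eqref{a.1} needs. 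So your argument is correct in structure, but you are working harder than necessary at the end; the BMO-type bound from $\nabla v\in L^2$ alone is both simpler and sufficient.
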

\begin{proof}
By a classical example in dimension $d=2$ (cf.\ e.g.\ \cite{PiccininiSpagnolo}),
for any exponent $\alpha\in(0,1)$,
there exists a uniformly elliptic, symmetric coefficient field $a_0$ of a
scalar equation and a weakly $a_0$-harmonic function $u_0$ (in particular, it
is locally integrable and of locally integrable gradient) whose modulus on
average grows like $|x|^\alpha$, for instance as expressed by
\begin{equation}\label{a.2}
\Big(\dashint_{B_R}u_0^2 ~dx\Big)^\frac{1}{2}\sim R^\alpha.
\end{equation}
Moreover, in this example
\begin{equation}\label{a.7}
a_0\;\mbox{and}\;u_0\;\mbox{are homogeneous and smooth outside the origin}.
\end{equation}
We now argue that this example may be post-processed to an
example of an {\it everywhere smooth} uniformly elliptic symmetric coefficient
field $a$ and a smooth $a$-harmonic function $u$ such that still
\eqref{a.1} holds.

Indeed, because of (\ref{a.7}) we can easily construct a uniformly elliptic
coefficient field $a$ that agrees with $a_0$ outside of $B_1$ and is smooth.
Next we observe that (\ref{a.7}) also implies (using $d=2$ and $\alpha>0$) that
$\nabla u_0$ is locally {\it square} integrable, so that by Riesz'
representation theorem, there exists a weak solution of
\begin{equation}\label{a.8}
-\nabla\cdot a\nabla w=\nabla\cdot(a-a_0)\nabla u_0
\end{equation}
in the sense that $w$ and its gradient are locally integrable and that 
\begin{equation}\label{a.3}
\int|\nabla w|^2~dx\leq C(\lambda).
\end{equation}
Equation (\ref{a.8}) is made such that $u=u_0+w$ is a weak solution
(i.\ e.\ locally integrable with locally integrable gradient) of
\begin{equation}\nonumber
-\nabla\cdot a\nabla u=0,
\end{equation}
and thus smooth since $a$ is smooth by classical uniqueness and regularity
results.
It remains to give the argument in favor of (\ref{a.1}), which in
view of (\ref{a.2}) follows once we show that (\ref{a.3}) implies in particular
for large $R$
\begin{equation}\label{a.6}
\Big(\dashint_{B_R}w^2~dx\Big)^\frac{1}{2} = o(R^\alpha).
\end{equation}
This is a well-known argument related to ``bounded mean oscillation'': By
Poincar\'e's estimate with mean value zero we have on every dyadic ball around
the origin
\begin{equation}\nonumber
\left(\int_{B_{2^n}}(w-\av_{B_{2^n}}w)^2~dx\right)^\frac{1}{2}
\leq C(d) \cdot 2^n\left(\int_{B_{2^n}}|\nabla w|^2~dx\right)^\frac{1}{2},
\end{equation}
which for $d=2$ takes on the form
\begin{equation}\label{a.4}
\left(\dashint_{B_{2^n}}(w-\dashint_{B_{2^n}}w)^2~dx\right)^\frac{1}{2}
\leq C\left(\int_{B_{2^n}}|\nabla w|^2~dx\right)^\frac{1}{2}
\stackrel{(\ref{a.3})}{\leq}C(\lambda).
\end{equation}
By Jensen's and the triangle inequality, this yields in particular
$|\dashint_{B_{2^{n-1}}}w~dx-\dashint_{B_{2^{n}}}w~dx|\leq C(\lambda)$ and
thus, since we may w.\ l.\ o.\ g.\ assume $\int_{B_1}w~dx=0$,
$|\dashint_{B_{2^{n}}}w~dx|\leq n C(\lambda)$.
Inserting this back into (\ref{a.4}) gives
\begin{equation}\nonumber
\left(\dashint_{B_{2^n}}w^2~dx\right)^\frac{1}{2}\leq n C(\lambda),
\end{equation}
that is, (\ref{a.6}) in the stronger form of
\begin{equation}\nonumber
\Big(\dashint_{B_R}w^2~dx\Big)^\frac{1}{2}\leq C(d)\log R.
\end{equation}
\end{proof}

\bibliographystyle{plain}
\bibliography{stochastic_homogenization}

\end{document}